\newcommand\tsout{\bgroup\markoverwith{\textcolor{red}{\rule[0.5ex]{2pt}{1.4pt}}}\ULon}
\newcommand{\stkout}[1]{\ifmmode\text{\tsout{\ensuremath{#1}}}\else\tsout{#1}\fi}
\theoremstyle{definition}
\newtheorem{theorem}{Theorem}[section]
\newtheorem{lemma}[theorem]{Lemma}
\newtheorem{proposition}[theorem]{Proposition}
\newtheorem{definition}[theorem]{Definition}
\newtheorem{algorithm}[theorem]{Algorithm}
\newtheorem{assumption}[theorem]{Assumption}
\newtheorem{remark}[theorem]{Remark}
\numberwithin{equation}{section}
\newcommand{\bff}{\boldsymbol}
\newcommand{\bb}{\mathbb}
\newcommand{\dt}{\mathrm{d}t}
\newcommand{\dtt}{\mathrm{d}_\tau}
\newcommand{\ddt}{\frac{\mathrm{d}}{\mathrm{d}t}}
\newcommand{\dx}{\mathrm{d}x}
\newcommand{\ds}{\mathrm{d}s}
\newcommand{\norm}[2]{\left\|{#1}\right\|_{#2}}
\newcommand{\inpro}[2]{\left\langle#1,#2\right\rangle}
\newcommand{\abs}[1]{\left|{#1}\right|}
\begin{document}
	\setcounter{page}{1}
	
	\title[Error analysis of SAV-FEM for tumour growth model]
	{Error analysis of a fully discrete structure-preserving finite element scheme for a diffuse-interface model of tumour growth}
	
	\author[Agus L. Soenjaya]{Agus L. Soenjaya}
	\address{School of Mathematics and Statistics, The University of New South Wales, Sydney 2052, Australia}
	\email{\textcolor[rgb]{0.00,0.00,0.84}{a.soenjaya@unsw.edu.au}}
	
	\author[Ping Lin]{Ping Lin}
	\address{Division of Mathematics, University of Dundee, Dundee, DD1 4HN, UK}
	\email{\textcolor[rgb]{0.00,0.00,0.84}{p.lin@dundee.ac.uk}}
	
	\author[Thanh Tran]{Thanh Tran}
	\address{School of Mathematics and Statistics, The University of New South Wales, Sydney 2052, Australia}
	\email{\textcolor[rgb]{0.00,0.00,0.84}{thanh.tran@unsw.edu.au}}
	
	\date{\today}
	
	\keywords{}
	\subjclass{}
	
	\begin{abstract}
	   We develop a linear fully discrete structure-preserving finite element method for a diffuse-interface model of tumour growth. The system couples a Cahn--Hilliard type equation with a nonlinear reaction-diffusion equation for nutrient concentration and admits a dissipative energy law at the continuous level. For the discretisation, we employ a scalar auxiliary variable (SAV) formulation together with a mixed finite element method for the Cahn--Hilliard part and standard conforming finite elements for the reaction-diffusion equation in space, combined with a first-order time-stepping scheme. The resulting method is unconditionally energy-stable, mass-preserving, and inherits a discrete energy dissipation law associated with the SAV-based approximate energy functional, while requiring the solution of only linear systems at each time step. Under suitable regularity assumptions on the exact solution, we derive rigorous error estimates in the $L^2$, $H^1$, and $L^\infty$ norms, establishing first-order accuracy in time and optimal-order accuracy in space. A key step in this analysis is the proof of boundedness of the numerical solutions in $L^\infty$. Numerical experiments validate the theoretical convergence rates and demonstrate the robustness of the method in capturing characteristic phenomena such as aggregation and chemotactic tumour growth.
	\end{abstract}
	\maketitle
	\tableofcontents
    
\section{Introduction}

Based on the continuum theory of mixtures~\cite{OdeHawPru10}, Hawkins-Daarud, van der Zee, and Oden develop a thermodynamically consistent four-species model of tumour growth~\cite{HawZeeOde12} (see also~\cite{CriLiLowWis09, Fri23} for related models and \cite{WanLinYan25} for a simplified derivation). The formulation introduces several unknowns, which we now describe. {Let $\mathscr{D}\subset \bb{R}^3$ be an open, bounded domain, and $T>0$. Let $u:(0,T)\times\mathscr{D}\to\bb{R}$ and $n:(0,T)\times\mathscr{D}\to\bb{R}$ denote the tumour cell volume fraction and the nutrient-rich extracellular water volume fraction, respectively.} Physical consideration dictates that the Helmholtz free energy of the system is given by
\begin{align}\label{equ:free ener}
	\mathcal{E}[u,n]:= \int_\mathscr{D} \left( \frac{\epsilon^2}{2} \abs{\nabla u}^2 + \frac{\lambda}{2} u^2 + g(u) + \frac{1}{2\delta} n^2  - \chi_0 un\right) \dx,
\end{align}
where $g(u)$ is the classical Cahn--Hilliard free-energy density function.
Here, $\epsilon^2$ is a small diffusivity parameter related to the thickness of interfacial layers, $\lambda >0$ is a constant to be specified later, $\chi_0>0$ is a chemotaxis parameter, and $\delta\in (0,1)$ is a small parameter that determines the relative interaction strength between the cells and nutrient species.
The chemical potentials associated to the variables $u$ and $n$, respectively denoted by $\mu$ and $\sigma$, are given by the variational derivative of the Helmholtz free energy~\eqref{equ:free ener} with respect to each variable. 

The unknowns described above satisfy a system of coupled nonlinear PDEs which can be formulated as follows: Find {$(u,\mu,n):(0,T)\times\mathscr{D}\to\bb{R}^3$} satisfying
\begin{subequations}\label{equ:tum}
	\begin{alignat}{2}
		\label{equ:tum eq u}
		&\partial_t u
		=
		\Delta \mu + P(u)\cdot (\sigma-\mu)
		\qquad && \text{for $(t,\bff{x})\in(0,T)\times\mathscr{D}$,}
		\\
		\label{equ:tum eq mu}
		&\mu = -\epsilon^2 \Delta u + \lambda u - \chi_0 n + g'(u)
		\qquad && \text{for $(t,\bff{x})\in(0,T)\times\mathscr{D}$,}
		\\
		\label{equ:tum eq n}
		&\partial_t n = \Delta \sigma - P(u)\cdot (\sigma-\mu)
		\qquad && \text{for $(t,\bff{x})\in(0,T)\times\mathscr{D}$,}
		\\
		\label{equ:tum eq sig}
		&\sigma = \delta^{-1} n- \chi_0 u
		\qquad && \text{for $(t,\bff{x})\in(0,T)\times\mathscr{D}$,}
		\\
		\label{equ:tum init}
		&u(0,\bff{x})= u_0(\bff{x}),\quad n(0,\bff{x})= n_0(\bff{x})
		\qquad && \text{for } \bff{x}\in \mathscr{D},
		\\
		\label{equ:tum bound}
		&\partial_{\bff{\nu}} u= \partial_{\bff{\nu}} \mu = \partial_{\bff{\nu}} n = 0
		\qquad && \text{for } (t,\bff{x})\in (0,T) \times \partial \mathscr{D},
	\end{alignat}
\end{subequations}
where $\bff{\nu}$ is the outward pointing normal vector and $P$ is a non-negative proliferation function. The exact assumptions on the functions $P$ and $g$ will be given later. As described in~\cite{HawZeeOde12}, the model \eqref{equ:tum} has a natural four-constituent interpretation: a tumorous phase $u\approx 1$, a healthy cell phase $u\approx 0$, a nutrient-rich extracellular water phase $\sigma\approx 1$, and a nutrient-poor extracellular water phase $\sigma \approx 0$.

Relevant mathematical results on the problem \eqref{equ:tum} pertinent to the present work will be reviewed next. {Unique existence of weak and strong solutions, as well as the existence of global attractor are established in~\cite{GarYay20}; see also~\cite{FriGraRoc15} for a simplified model.} A characterisation of the singular limit of the problem to the corresponding sharp-interface model is discussed in~\cite{HilKamNguVan15}, with a rigorous proof given more recently in~\cite{RivRoc25}. An optimal control problem for \eqref{equ:tum} is studied in \cite{KadLouTra26}. On the numerical side, several nonlinear schemes to solve \eqref{equ:tum} are described in \cite{HawZeeOde12} based on the convex splitting approach. A linear second-order scheme is later proposed in~\cite{WuZwiZee14}, however no error analysis is performed for the schemes in any of these papers. A Fourier-spectral first-order scheme utilising the scalar auxiliary variable (SAV) approach is later proposed in~\cite{SheWuWenZha23}. While this scheme is linear, mass-preserving, and energy-dissipative, the method is applicable mainly to rectangular domains with periodic boundary conditions, and again no error analysis is given. 

More recently, numerical analysis of a first-order scheme to solve \eqref{equ:tum} employing a mass-lumping technique is performed in \cite{GarTra22}, where convergence along a subsequence to the exact solution is shown, albeit without a convergence rate. However, this scheme is nonlinear and does not necessarily guarantee mass conservation or energy dissipation at the discrete level. In contrast, some structure-preserving time-semidiscrete numerical schemes employing the SAV approach are proposed in~\cite{WanLinYan25} to solve the problem under periodic boundary conditions. For this setting, an error analysis of the time-semidiscrete case for the first-order scheme is performed, establishing an optimal \emph{temporal} order of convergence, although it is still limited to rectangular domains $\mathscr{D}\subset \bb{R}^2$ with periodic boundary conditions. To the best of our knowledge, a \emph{fully discrete}, \emph{linear}, and \emph{structure-preserving} numerical scheme for \eqref{equ:tum}, together with a rigorous error analysis, has not yet been developed in the literature.

In this work, we aim to fill this gap by proposing a fully discrete, linear, and structure-preserving numerical method for \eqref{equ:tum}, and providing its rigorous error analysis. Specifically, we employ an SAV formulation~\cite{SheXuYan19} together with a first-order semi-implicit time-stepping scheme. {In space, we use a mixed finite element method for the Cahn--Hilliard part and standard conforming finite elements for the reaction-diffusion part. 

This choice is motivated by the structure of the system. The Cahn--Hilliard equation is fourth order when written solely in terms of the phase-field variable, and the introduction of the chemical potential reduces it to a coupled system of second-order equations that can be discretised using standard $C^0$ conforming finite elements. The reaction-diffusion equation, on the other hand, is already second order and can therefore be discretised directly by standard conforming finite elements without introducing additional auxiliary variables.} The resulting mass-preserving linear scheme inherits a discrete energy dissipation law from the SAV formulation, but differs from the scheme introduced in~\cite{WanLinYan25} to facilitate the analysis in the fully discrete setting considered here. 

The convergence analysis carried out in the present paper, and hence the final results, differ substantially from those in~\cite{WanLinYan25}. While \cite{WanLinYan25} studies a time-semidiscrete scheme in a two-dimensional domain, we analyse a fully discrete finite element scheme on a three-dimensional convex domain. This distinction is significant, since the fully discrete solutions possess only $H^1$ spatial regularity, and the three-dimensional setting makes the control of nonlinear terms substantially more delicate. In particular, the $H^2$ regularity estimate that plays a central role in the time-semidiscrete analysis of~\cite{WanLinYan25} is no longer available in our setting.

To overcome this difficulty, we derive a number of delicate estimates for the nonlinear terms, relying on discrete Sobolev embeddings and discrete Gagliardo--Nirenberg inequalities. A key ingredient is the derivation of $L^\infty$ bounds for the finite element solution, together with estimates for its discrete time derivative in negative-order Sobolev spaces. Furthermore, we employ the Ritz projection of the exact solution and establish superconvergence estimates for the difference between the finite element solution and its Ritz projection. These tools allow us to obtain optimal error estimates not only in $H^1$, but also in $L^2$ and $L^\infty$.

The remainder of the paper is organised as follows. In Section~\ref{sec:prelim}, we introduce the notations and assumptions, present a formulation of the problem based on the SAV approach, and collect various preliminary results on finite element approximation. Section~\ref{sec:savfemeuler} describes the numerical scheme, establishes its mass conservation and energy stability properties, and provides the error estimates. In particular, the scheme is detailed in Algorithm~\ref{alg:sav}, and the main convergence result is stated in Theorem~\ref{the:main}. Section~\ref{sec:exp} reports several numerical experiments that validate the theoretical results. Finally, Section~\ref{sec:conclusion} concludes the paper with some final remarks.

\section{Preliminaries}\label{sec:prelim}

Commonly used notations, standing assumptions, as well as preliminary results on the finite element approximation and SAV formulation of \eqref{equ:tum} are collected in this section.

\subsection{Notations}
Let $\mathscr{D}\subset \bb{R}^d$ for $d=1,2,3$
be an open and bounded convex polytopal domain. Let $\Delta$ denote the Neumann Laplacian operator. The function space $L^p := L^p(\mathscr{D})$ denotes the usual Lebesgue space of $p$-th integrable functions and $W^{s,p} := W^{s,p}(\mathscr{D})$ denotes the usual Sobolev space of 
functions on $\mathscr{D}$. We
write $H^s := W^{s,2}$ and set $W^{0,p}=L^p$.

{If $X$ is a Banach space, the spaces $L^p(0,T;X)$ and $W^{s,p}(0,T;X)$ denote, respectively, the usual Lebesgue--Bochner and Sobolev--Bochner spaces of functions on $(0,T)$ taking values in $X$. We shall refer to these vector-valued function spaces collectively as Bochner spaces.} The space $\mathcal{C}([0,T];X)$ denotes the space of continuous functions on $[0,T]$ taking values in $X$. For simplicity, we write $W^{s,p}_T(W^{m,r}) := W^{s,p}(0,T; W^{m,r})$ and $L^p_T(L^q) := L^p(0,T; L^q)$. Throughout this paper, we denote the scalar product in a Hilbert space $H$ by $\inpro{\cdot}{\cdot}_H$ and its corresponding norm by $\|\cdot\|_H$. The scalar product in $L^2$ will simply be denoted by $\inpro{\cdot}{\cdot}$.

For any positive integer $s$, the dual space of $H^s$ with respect to the dual pairing which is the extension of the $L^2$ inner product will be denoted by $\widetilde{H}^{-s}$. This space is equipped with the norm
\begin{align*}
	\norm{v}{\widetilde{H}^{-s}}:= \sup_{0\neq \varphi\in H^s}\, \frac{\inpro{v}{\varphi}}{\norm{\varphi}{H^s}}.
\end{align*}

Finally, the constant $C$ in the estimate denotes a generic constant which takes different values at different occurrences. If
the dependence of $C$ on some variable, e.g.~$T$, is highlighted, we will write $C_T$ or
$C(T)$.

\subsection{Assumptions}

Assumptions on the model used in this paper are stated in this subsection.

\begin{assumption}\label{ass:assum P}
We assume that the proliferation function $P:\bb{R}\to \bb{R}_{\geq 0}$ lies in $W^{1,\infty}$ and is non-negative. In particular, $P$ is Lipschitz continuous, namely there exists a constant $C>0$ such that for any $v_1,v_2\in \bb{R}$,
\begin{align}\label{equ:P lipschitz}
	\abs{P(v_1)-P(v_2)} \leq C\abs{v_1-v_2}.
\end{align}
This implies a sublinear growth for $P$:
\begin{align}\label{equ:P sublin}
	\abs{P(v)} \leq C\left(1+\abs{v}\right).
\end{align}
\end{assumption}

\begin{assumption}\label{ass:assum f}
	{Let $g$ denote the Cahn--Hilliard free-energy density. We assume that
	$g\in C^3(\bb{R})$ and introduce the SAV splitting
	\[
	f(s):=g(s)-\frac{\lambda}{2}s^2,
	\]
	where $\lambda>0$ is the stabilisation parameter used in the scheme.} We assume
	that $f$ is bounded below and that there exists a constant $C>0$ such that, for
	$m\in\{0,1,2,3\}$ and all $s\in\bb{R}$,
	\begin{align}
		\label{equ:f der bound}
		\abs{f^{(m)}(s)}
		\leq
		C\left(1+\abs{s}^{4-m}\right).
	\end{align}
	Here, $f^{(m)}$ denotes the $m$-th derivative of $f$ and $f^{(0)}=f$.
\end{assumption}

\begin{remark}\label{rem:func f P}
	The commonly used double-well quartic potential and the linear proliferation function satisfy the above assumptions. For concreteness, as in~\cite{HawZeeOde12,WanLinYan25}, we take the original Cahn--Hilliard free-energy density to be
	\[
	g(u)=\kappa u^2(1-u)^2,
	\]
	where $\kappa>0$. {In the SAV formulation, we use the splitting
	$f(u)=\kappa u^2(1-u)^2-\frac{\lambda}{2}u^2$.
	Thus $\lambda>0$ is a stabilisation parameter introduced by the splitting, not a physical parameter of the model. The term $\frac{\lambda}{2}u^2$ cancels with the corresponding term in $f$ in the original free energy \eqref{equ:free ener}, but it appears in the numerical scheme (Algorithm~\ref{alg:sav}) and is useful for stability. Since the quartic term dominates the negative quadratic term, $f$ remains bounded from below for every fixed $\lambda>0$.
	
	In the analysis below, we impose the sufficient condition
	\begin{align}\label{equ:ass lambda begin}
	\lambda>\chi_0^2\delta .
	\end{align}
	This condition is not claimed to be sharp. It implies the weaker condition
	$4\lambda\ge\chi_0^2\delta$ used in the well-posedness proof (see Proposition~\ref{pro:well-posed}) and ensures the
	coercivity of the quadratic part of the modified energy (see Lemma~\ref{lem:stab u H1}). In computations,
	smaller choices of $\lambda$ may still be effective, but such choices are not
	covered by the present coercivity argument.} We do not investigate the optimal
	choice of $\lambda$ here. Similar stabilisation approaches are used in~\cite{HeLiuTan07,SheYan10} for phase-field models; see also~\cite{XuLiWu19} for a detailed study of choices of $\lambda$ and comparisons with other methods.
	
	The function $P$ is defined by
	\begin{equation}\label{equ:P u def}
		P(u)=
		\begin{cases}
			\delta p_0 u, &\text{if $u\geq 0$},
			\\
			0, &\text{otherwise},
		\end{cases}
	\end{equation}
	where $p_0$ is a positive constant denoting the proliferation growth parameter and $\delta\in(0,1)$.
\end{remark}

\begin{remark}
As a consequence of Assumption~\ref{ass:assum f}, by the mean value theorem we have for $m\in \{0,1,2\}$ any $x,y\in\bb{R}$,
\begin{align}
	\label{equ:f prime xy}
	\abs{f^{(m)}(x)-f^{(m)}(y)}
	&\leq
	\left(\sup_{t\in [0,1]} \abs{f^{(m+1)}\big((1-t)x+ty\big)}\right) \abs{x-y}
    \nonumber \\
	&\leq
	C\left(1+\abs{x}^{3-m}+\abs{y}^{3-m}\right) \abs{x-y},
\end{align}
where $C$ is a constant independent of $x$ or $y$.
\end{remark}

\subsection{SAV formulations}

We now present a reformulation of \eqref{equ:tum} that introduces an auxiliary variable, which will facilitate the development of numerical schemes subsequently.
{Recalling the introduction of the free-energy density function $f$ in \eqref{equ:free ener}, we define}
\begin{align}\label{equ:E1 u}
	\mathcal{E}_1[u] := \int_\mathscr{D} f(u) \,\dx.
\end{align}
Note that $\mathcal{E}_1[u]> -B$ for some {positive} constant $B$ by Assumption~\ref{ass:assum f}. We introduce a scalar variable $r(t):= \sqrt{\mathcal{E}_1[u]+B}>0$, and so $r(0)= \sqrt{\mathcal{E}_1[u_0]+B}$. 

{The constant $B$ is not a physical parameter; its role is only to ensure positivity in the SAV reformulation. In practice, $B$ is chosen with a moderate safety margin so that $\mathcal{E}_1[u_h^k]+B$ remains away from zero, where $u_h^k$ denotes the fully discrete approximation of $u$ to be introduced in Section~\ref{sec:savfemeuler}. In particular, it is not necessarily desirable to choose $B$ as small as possible, since this may enlarge the factor $\big(\mathcal{E}_1[u_h^k]+B\big)^{-\frac12}$ appearing in the scheme and may affect conditioning or amplify errors. On the other hand, excessively large values of $B$ are also unnecessary: it does not change the continuous SAV formulation, but makes the auxiliary variable artificially large and may introduce avoidable cancellation in the modified energy or influence the size of constants in the numerical error estimates. In practice, $B$ is therefore chosen as a moderate positive shift.}

With these functions, we can rewrite the problem~\eqref{equ:tum} following the SAV approach as follows:
\begin{subequations}\label{equ:tum sav}
	\begin{alignat}{2}
		\label{equ:tum eq u sav}
		&\partial_t u
		=
		\Delta \mu + P(u)\cdot (\sigma-\mu)
		\qquad && \text{for $(t,\bff{x})\in(0,T)\times\mathscr{D}$,}
		\\
		\label{equ:tum eq mu sav}
		&\mu = -\epsilon^2 \Delta u + \lambda u - \chi_0 n + \frac{r(t)}{\sqrt{\mathcal{E}_1[u]+B}}\, f'(u)
		\qquad && \text{for $(t,\bff{x})\in(0,T)\times\mathscr{D}$,}
		\\
		\label{equ:tum eq n sav}
		&\partial_t n = \Delta \sigma - P(u)\cdot (\sigma-\mu)
		\qquad && \text{for $(t,\bff{x})\in(0,T)\times\mathscr{D}$,}
		\\
		\label{equ:tum eq sig sav}
		&\sigma = \delta^{-1} n- \chi_0 u
		\qquad && \text{for $(t,\bff{x})\in(0,T)\times\mathscr{D}$,}
		\\
		\label{equ:tum eq r sav}
		&{\ddt r} = \frac{1}{2\sqrt{\mathcal{E}_1[u]+B}} \inpro{f'(u)}{\partial_t u}
		\qquad && \text{for {$t\in(0,T)$},}
		\\
		\label{equ:tum init sav}
		&u(0,\bff{x})= u_0(\bff{x}),\quad n(0,\bff{x})= n_0(\bff{x})
		\qquad && \text{for } \bff{x}\in \mathscr{D},
		\\
		\label{equ:tum bound sav}
		&\partial_{\bff{\nu}} u= \partial_{\bff{\nu}} \mu = \partial_{\bff{\nu}} n = 0
		\qquad && \text{for } (t,\bff{x})\in (0,T) \times \partial \mathscr{D},
	\end{alignat}
\end{subequations}
The system \eqref{equ:tum sav} consists of a fourth-order Cahn--Hilliard-type PDE with nonlinear source terms, coupled with a second-order nonlinear reaction-diffusion equation and an ODE.
Taking the inner product of \eqref{equ:tum eq u sav}, \eqref{equ:tum eq mu sav}, \eqref{equ:tum eq n sav}, and \eqref{equ:tum eq sig sav} with $\mu$, $-\partial_t u$, $\sigma$, and $-\partial_t n$, respectively, and multiplying \eqref{equ:tum eq r sav} by $2r(t)$, we obtain
\begin{align}\label{equ:ene law sav form}
	\ddt \mathcal{E}[u,n] \leq 0,	
\end{align}
which is the same energy dissipation law obeyed by~\eqref{equ:tum}.

A weak formulation of the system \eqref{equ:tum sav} can now be defined.

\begin{definition}\label{def:weak-solution}
Let $T>0$ and $(u_0,n_0)\in H^1\times L^2$ be given. A weak solution to the system \eqref{equ:tum sav} is $(u,\mu,n,r)$ with regularity
\begin{align*}
	u &\in H^1_T(L^2) \cap L^\infty_T(H^1) \cap L^2_T(H^2),
	\\
	\mu &\in L^2_T(H^1),
	\\
	n &\in H^1_T(L^2) \cap L^2_T(H^1),
	\\
	r &\in H^1(0,T),
\end{align*}
such that $\big(u(0),n(0)\big)=(u_0,n_0)$, and that for all $(\phi, \psi, \varphi) \in H^1\times H^1\times H^1$ and {a.e.} $t\in (0,T)$,
\begin{subequations}\label{equ:weak tum}
	\begin{alignat}{1}
		\label{equ:weak tum u}
		\inpro{\partial_t u(t)}{\phi}
		&=
		-\inpro{\nabla \mu(t)}{\nabla \phi}
		+
		\inpro{P(u(t))\cdot (\sigma(t) -\mu(t))}{\phi}
		\\
		\label{equ:weak tum mu}
		\inpro{\mu(t)}{\psi}
		&=
		\epsilon^2 \inpro{\nabla u(t)}{\nabla \psi}
		+
		\lambda \inpro{u(t)}{\psi}
		-
		\chi_0 \inpro{n(t)}{\psi}
		+
		\frac{r(t)}{\sqrt{\mathcal{E}_1[u(t)]+B}} \inpro{f'(u(t))}{\psi},
		\\
		\label{equ:weak tum n}
		\inpro{\partial_t n(t)}{\varphi}
		&=
		-\inpro{\nabla \sigma(t)}{\nabla \varphi}
		-
		\inpro{P(u(t))\cdot (\sigma(t) -\mu(t))}{\varphi},
		\\
		\label{equ:weak tum sigma}
		\sigma(t)
		&=
		\delta^{-1} n(t)
		-
		\chi_0 u(t), 
		\\
		\label{equ:weak tum r}
		\ddt r
		&=
		\frac{1}{2\sqrt{\mathcal{E}_1[u(t)]+B}}
		\inpro{f'(u(t))}{\partial_t u(t)}.
	\end{alignat}
\end{subequations}
\end{definition}
The assumed regularity on weak solution implies that $u\in \mathcal{C}([0,T];H^1)$ and $n\in \mathcal{C}([0,T];L^2)$ by interpolation theorems, thus the requirement $\big(u(0),n(0)\big)=(u_0,n_0)\in H^1\times L^2$ is meaningful. {The existence of a unique weak solution to the problem is established in~\cite{FriGraRoc15}.}

\subsection{Finite element approximation}

Let $\mathscr{D}\subset \mathbb{R}^d$, $d=1,2,3$, be a convex polytopal domain. Let $\{\mathcal{T}_h\}_{h>0}$ be a shape-regular and quasi-uniform family of simplicial triangulations of $\mathscr{D}$, consisting of intervals if $d=1$, triangles if $d=2$, and tetrahedra if $d=3$, with maximal mesh size $h$.
To discretise the problem \eqref{equ:tum sav}, we
introduce the conforming finite element space $V_h \subset H^1$ given by
\begin{equation}\label{equ:Vh}
	V_h := \{\phi_h \in \mathcal{C}(\overline{\mathscr{D}}): \phi_h|_K \in \mathcal{P}_q(K), \; \forall K \in \mathcal{T}_h\},
\end{equation}
where $\mathcal{P}_q(K)$ denotes the space of polynomials of degree less than or equal to $q$ on $K$, {$q\ge1$.}

As a consequence of the standard finite element approximation theory, for any $p\in [1,\infty]$, there exists a constant $C>0$, independent of $h$, such that for any $v\in W^{q+1,p}$,
\begin{align}\label{equ:fin approx}
	\inf_{\chi \in V_h} \left\{ \norm{v - \chi}{L^p} 
	+ 
	h \norm{\nabla v-\nabla \chi}{L^p} 
	\right\} 
	\leq 
	C h^{q+1} \norm{v}{W^{q+1,p}}.
\end{align}

In the analysis, we shall use several projection and interpolation operators. The existence of such operators and the properties that they possess will be described below (also see~\cite{BreSco08}).
Firstly, there exists an orthogonal projection operator $\Pi_h: L^2 \to V_h$ defined by
\begin{align}\label{equ:orth proj}
	\inpro{\Pi_h v-v}{\chi}=0,
	\quad
	\forall \chi \in V_h.
\end{align}
The projection operator $\Pi_h$ has the following boundedness and approximation properties~\cite{CroTho87, DouDupWah74}: for any $p\in [1,\infty]$, there exists a constant $C$ independent of $h$ such that
\begin{align}
	\label{equ:proj stable}
	\norm{\Pi_h v}{W^{s,p}}
	&\leq
	C \norm{v}{W^{s,p}}, \quad \forall v\in W^{s,p},\; s\in\{0,1\},
	\\
	\label{equ:proj approx}
	\norm{v- \Pi_h v}{L^p}
	+
	h \norm{\nabla( v-\Pi_h v)}{L^p}
	&\leq
	C h^{q+1} \norm{v}{W^{q+1,p}}, \quad \forall v\in W^{q+1,p}.
\end{align}
It is well-known that $\Pi_h$ is also bounded in the $\widetilde{H}^{-1}$ norm~\cite{CheMaoShe20}.

{
Next, we introduce the discrete Laplacian operator $\Delta_h:V_h\to V_h$ as the
	$L^2$-Riesz representative of the weak Laplacian restricted to $V_h$. More precisely, for each fixed $v_h\in V_h$, the map $\chi\mapsto -\inpro{\nabla v_h}{\nabla \chi}$
	is a linear functional on the finite-dimensional space $V_h$. Hence, by the
	finite-dimensional Riesz representation theorem with respect to the $L^2$ inner
	product, there exists a unique element $\Delta_h v_h\in V_h$ such that
	\begin{align}\label{equ:disc laplacian}
		\inpro{\Delta_h v_h}{\chi}
		=
		-\inpro{\nabla v_h}{\nabla \chi},
		\qquad
		\forall \chi\in V_h.
	\end{align}
	More concretely, if $\{\varphi_i\}_{i=1}^{N_h}$ is a basis of $V_h$, and
	\[
	v_h=\sum_{j=1}^{N_h} V_j\varphi_j,
	\qquad
	\Delta_h v_h=\sum_{j=1}^{N_h} D_j\varphi_j,
	\]
	then the coefficient vector $D=(D_j)_{j=1}^{N_h}$ is determined by $MD=-AV$, where
	$M_{ij}:=\inpro{\varphi_j}{\varphi_i}$ and $A_{ij}:=\inpro{\nabla\varphi_j}{\nabla\varphi_i}$, and $V=(V_j)_{j=1}^{N_h}$.
	Since the mass matrix $M$ is symmetric positive definite, $D$ is uniquely defined.
	Thus $\Delta_h v_h$ is a well-defined element of $V_h$, not merely a functional on
	$V_h$. This is the standard Galerkin realisation of the Neumann Laplacian on the
	finite element space; see~\cite[Chapter~1]{Tho06}. Since $\Delta_h:V_h\to V_h$ is well-defined, the operator $\Delta_h^2$ used in our analysis is simply the composition of $\Delta_h$ with itself. Consequently, for all $v_h, \chi\in V_h$,
	\begin{align}\label{equ:disc lap sq}
		\inpro{\Delta_h^2 v_h}{\chi}= -\inpro{\nabla\Delta_h v_h}{\nabla \chi}= \inpro{\Delta_h v_h}{\Delta_h \chi}.
	\end{align}
	The identities \eqref{equ:disc laplacian} and \eqref{equ:disc lap sq} are discrete analogues of Green's formula. They allow the weak Laplacian and its discrete iterate to be represented through $L^2$ inner products on $V_h$, which is the key mechanism used later in deriving higher-order stability and error estimates.
}

Taking $\chi=v_h$ in \eqref{equ:disc laplacian} gives
\begin{align}
	\label{equ:nab disc lap}
	\norm{\nabla v_h}{L^2}^2
	=
	-\inpro{\Delta_h v_h}{v_h}
	\leq
	\norm{\Delta_h v_h}{L^2}\norm{v_h}{L^2}.
\end{align}
Moreover, the discrete Laplacian satisfies the following estimates
(see~\cite[Lemma~2 and Lemma~9]{CheMaoShe20}):
\begin{align}
	\label{equ:disc lap H-1}
	\norm{\Delta_h v_h}{\widetilde{H}^{-1}}
	&\leq C \norm{\nabla v_h}{L^2},
	\\
	\label{equ:disc lap H-2}
	\norm{\Delta_h v_h}{\widetilde{H}^{-2}}
	&\leq C \norm{v_h}{L^2}.
\end{align}

We also define the Ritz projection $R_h:H^1\to V_h$ as follows: for each
$v\in H^1$, $R_h v\in V_h$ is the unique function satisfying
\begin{align}\label{equ:Ritz}
	\inpro{\nabla R_h v- \nabla v}{\nabla \chi}=0,
	\quad
	\forall \chi\in V_h, \quad \text{such that }\; \inpro{R_h v-v}{1}=0.
\end{align}
The last condition fixes the constant part and ensures uniqueness, since the
Neumann bilinear form $\inpro{\nabla \cdot}{\nabla \cdot}$ is coercive only
modulo constants. The Ritz projection is closely related to the discrete Laplacian introduced
above. Indeed, if $v$ is sufficiently smooth and satisfies the homogeneous
Neumann boundary condition, then $\Delta_h R_h v=\Pi_h\Delta v$, where $\Pi_h$ denotes the $L^2$ projection onto $V_h$ introduced earlier in~\eqref{equ:orth proj}; see~\cite[Chapter~1]{Tho06}.

The Ritz projection satisfies the following approximation properties
\cite{BreSco08,LeyLi21,LinThoWah91,RanSco82}: for any $p\in(1,\infty)$, there
exists a constant $C>0$, independent of $h$ and $v$, such that
\begin{align}\label{equ:Ritz ineq}
	\norm{v-R_h v}{W^{s,p}}
	&\leq
	C h^{q+1-s} \norm{v}{W^{q+1,p}},
	\qquad  s\in \{0,1\},
	\\
	\label{equ:Ritz infty}
	\norm{v-R_h v}{L^\infty}
	&\leq
	Ch^{q+1} \abs{\ln h}^{\frac12}
	\norm{v}{W^{q+1,\infty}} .
\end{align}

We also need the following approximation property of the Ritz projection $R_h$ in Sobolev spaces of negative indices~\cite{CheMaoShe20, Tho06}: for $0\leq s\leq q-1$ and $1\leq p\leq q+1$, there exists a constant $C$ independent of $h$ and $v$ such that
\begin{align}\label{equ:Ritz approx neg}
	{\norm{v-R_h v}{\widetilde{H}^{-s}}} 
	\leq Ch^{s+p} \norm{v}{H^p}.
\end{align}

In two-dimensional polygonal domains, the discrete Sobolev inequality~\cite{BreSco08} holds: there exists a constant $C$ independent of $h$ such that for all $v_h\in V_h$,
\begin{align}\label{equ:disc sob 2d}
    \norm{v_h}{L^\infty} \leq C \abs{\ln h}^{\frac12} \norm{v_h}{H^1}.
\end{align}

Finally, in convex polytopal domains with quasi-uniform triangulations, the following discrete Gagliardo--Nirenberg inequalities~\cite{LiuCheWanWis17} hold for any $v_h\in V_h$:
\begin{align}
	\label{equ:disc lapl L infty}
	\norm{v_h}{L^\infty}
	&\leq
	C \norm{\Delta_h v_h}{L^2}^{\frac{d}{2(6-d)}}
	\norm{v_h}{L^6}^{\frac{3(4-d)}{2(6-d)}}
	+
	C\norm{v_h}{L^6},
	\\
	\label{equ:nab vh L3}
	\norm{\nabla v_h}{L^3}
	&\leq
	C\norm{\Delta_h v_h}{L^2}^{\frac{d}{6}} \norm{\nabla v_h}{L^2}^{\frac{6-d}{6}}
	+
	C\norm{\nabla v_h}{L^2}.
\end{align}
Here, the constant $C$ depends only on $\mathscr{D}$ and $d$.

\section{SAV-FEM with first-order time discretisation}\label{sec:savfemeuler}

\subsection{The scheme and its well-posedness}
{For any function $v$, we write $v^k:= v(t_k)$, and define the discrete time derivative of $v^{k+1}$ by
\begin{align*}
	\dtt v^{k+1} := \frac{1}{\tau} (v^{k+1}-v^k),
\end{align*}
where $t_k:=k\tau \in [0,T]$ for $k=0,1,\ldots,N$ and $N := \lfloor T/\tau \rfloor$.

Let $(u_h^k, \mu_h^k, n_h^k, \sigma_h^k, r_h^k)\in [V_h]^4 \times \bb{R}$ be an approximation of $(u^k,\mu^k,n^k,\sigma^k,r^k)$ by a linear fully-discrete SAV-FEM with first-order time discretisation defined as follows.}
We start with $(u_h^0, n_h^0)=(R_h u_0, R_h n_0) \in V_h\times V_h$ and $r_h^0=\sqrt{\mathcal E_1[u_h^0]+B} \in \bb{R}$ for simplicity. 

Given $(u_h^k, \mu_h^k, n_h^k, r_h^k)\in V_h\times V_h\times V_h\times \bb{R}$, we find $(u_h^{k+1}, \mu_h^{k+1}, n_h^{k+1}, r_h^{k+1})$ satisfying
\begin{subequations}\label{equ:fem euler}
	\begin{alignat}{2}
		\label{equ:fem euler u}
		\inpro{\dtt u_h^{k+1}}{\phi}
		&=
		-\inpro{\nabla \mu_h^{k+1}}{\nabla \phi}
		+
		\inpro{P(u_h^k)\cdot (\sigma_h^{k+1}-\mu_h^{k+1})}{\phi}, \quad &&\forall \phi \in V_h,
		\\
		\inpro{\mu_h^{k+1}}{\psi}
		&=
		\epsilon^2 \inpro{\nabla u_h^{k+1}}{\nabla \psi}
		+
		\lambda \inpro{u_h^{k+1}}{\psi}
		-
		\chi_0 \inpro{n_h^{k+1}}{\psi}
		\nonumber\\
		\label{equ:fem euler mu}
		&\qquad
		+
		\frac{r_h^{k+1}}{\sqrt{\mathcal{E}_1[u_h^k]+B}} \inpro{f'(u_h^k)}{\psi}, \quad &&\forall \psi\in V_h,
		\\
		\label{equ:fem euler n}
		\inpro{\dtt n_h^{k+1}}{\varphi}
		&=
		-\inpro{\nabla \sigma_h^{k+1}}{\nabla \varphi}
		-
		\inpro{P(u_h^k)\cdot (\sigma_h^{k+1}-\mu_h^{k+1})}{\varphi}, \quad &&\forall \varphi \in V_h,
		\\
		\label{equ:fem euler sigma}
		\sigma_h^{k+1}
		&=
		\delta^{-1} n_h^{k+1}
		-
		\chi_0 u_h^k,
		\\
		\label{equ:fem euler r}
		\dtt r_h^{k+1}
		&=
		\frac{1}{2\sqrt{\mathcal{E}_1[u_h^k]+B}}
		\inpro{f'(u_h^k)}{\dtt u_h^{k+1}}.
	\end{alignat}
\end{subequations}
The energy functional $\mathcal{E}_1$ is defined by \eqref{equ:E1 u}, while the functions $P$ and $f$ are given in Remark~\ref{rem:func f P}. 

For ease of presentation, we will subsequently write:	
\begin{align}\label{equ:phk bhk}
	p_h^k:=P(u_h^k),
	\qquad
	\beta_h^k := \mathcal E_1[u_h^k]+B,
	\qquad
	b_h^k:=
	\frac{f'(u_h^k)}{\sqrt{\beta_h^k}},
\end{align}
as well as
\begin{align}\label{equ:beta k bk}
	p^k :=P(u^k),
	\qquad
	\beta^k := \mathcal E_1[u^k]+B = (r^k)^2,
	\qquad
	b^k:=
	\frac{f'(u^k)}{\sqrt{\beta^k}}.
\end{align}
The numerical scheme given by \eqref{equ:fem euler} is made precise in the following algorithm.

\begin{algorithm}[SAV-FEM with first-order time discretisation]\label{alg:sav}
	Let $h>0$, $\tau>0$ be given. 
	\\
	\textbf{Input}: Set $(u_h^0,n_h^0)=(R_hu_0,R_hn_0)\in V_h\times V_h$, and $r_h^0=\sqrt{\mathcal E_1[u_h^0]+B} \in \bb{R}$. Choose a stabilisation parameter $\lambda>0$, for instance according to \eqref{equ:ass lambda begin}.
	\\
	\textbf{For} $k=0,1,\ldots,N-1$, \textbf{do}:
	\begin{enumerate}
		\item Using \eqref{equ:fem euler r}, write
		\[
		r_h^{k+1}
		=
		r_h^k
		+
		\frac12\inpro{b_h^k}{u_h^{k+1}-u_h^k}.
		\]
		
		\item Find $(u_h^{k+1},\mu_h^{k+1},n_h^{k+1})\in V_h\times V_h\times V_h$ such that, for all
		$(\phi,\psi,\varphi)\in V_h\times V_h\times V_h$,
		\begin{align}
			\inpro{\dtt u_h^{k+1}}{\phi}
			&=
			-\inpro{\nabla\mu_h^{k+1}}{\nabla\phi}
			+
			\inpro{
				p_h^k
				\left(
				\delta^{-1}n_h^{k+1}
				-\chi_0u_h^k
				-\mu_h^{k+1}
				\right)
			}{\phi},
			\label{equ:alg-elim-u}
			\\
			\inpro{\mu_h^{k+1}}{\psi}
			&=
			\epsilon^2\inpro{\nabla u_h^{k+1}}{\nabla\psi}
			+
			\lambda\inpro{u_h^{k+1}}{\psi}
			-
			\chi_0\inpro{n_h^{k+1}}{\psi}
			\nonumber\\
			&\quad
			+
			\left(
			r_h^k-\frac12\inpro{b_h^k}{u_h^k}
			\right)
			\inpro{b_h^k}{\psi}
			+
			\frac12
			\inpro{b_h^k}{u_h^{k+1}}
			\inpro{b_h^k}{\psi},
			\label{equ:alg-elim-mu}
			\\
			\inpro{\dtt n_h^{k+1}}{\varphi}
			&=
			-\inpro{
				\nabla\left(
				\delta^{-1}n_h^{k+1}
				-\chi_0 u_h^k
				\right)
			}{\nabla\varphi}
			-
			\inpro{
				p_h^k
				\left(
				\delta^{-1}n_h^{k+1}
				-\chi_0u_h^k
				-\mu_h^{k+1}
				\right)
			}{\varphi}.
			\label{equ:alg-elim-n}
		\end{align}
		
		\item Recover
		\[
		\sigma_h^{k+1}
		=
		\delta^{-1}n_h^{k+1}
		-
		\chi_0u_h^k,
		\]
		and
		\[
		r_h^{k+1}
		=
		r_h^k
		+
		\frac12\inpro{b_h^k}{u_h^{k+1}-u_h^k}.
		\]
	\end{enumerate}
	
	\textbf{Output}: the sequence $\{u_h^k,\mu_h^k,n_h^k,\sigma_h^k,r_h^k\}_{k=1}^{N}$.
\end{algorithm}

We first establish the well-posedness of Algorithm~\ref{alg:sav}. 
The condition \eqref{equ:ass lambda} below does not impose a genuine restriction on the model parameters, since $\lambda>0$ is a stabilisation parameter introduced in the SAV splitting and may be chosen sufficiently large (see Remark~\ref{rem:func f P}).

{
	\begin{proposition}\label{pro:well-posed}
		Let $\tau>0$ and suppose that $u_h^k,n_h^k\in V_h$, and $r_h^k\in\mathbb{R}$ are given. 
		Assume that
		\begin{align}\label{equ:ass lambda}
			4\lambda \ge \chi_0^2\delta .
		\end{align}
		Then the scheme given by Algorithm~\ref{alg:sav} admits a unique solution
		\[
		(u_h^{k+1},\mu_h^{k+1},n_h^{k+1},\sigma_h^{k+1},r_h^{k+1})
		\in V_h\times V_h\times V_h\times V_h\times \mathbb R .
		\]
	\end{proposition}
	
	\begin{proof}
		At the time level $k$, all coefficients evaluated at $u_h^k$ are known. Recall $p_h^k$ and $b_h^k$ defined in \eqref{equ:phk bhk}.
		We have $p_h^k\ge 0$ in $\mathscr D$ by Assumption~\ref{ass:assum P}.
		After choosing a basis of $V_h$, the scheme is a square finite-dimensional linear system. Uniqueness of solution implies existence, thus it is enough to prove uniqueness.
		
		Suppose that $(u_{h,1}^{k+1},\mu_{h,1}^{k+1},n_{h,1}^{k+1},\sigma_{h,1}^{k+1},r_{h,1}^{k+1})$
		and
		$(u_{h,2}^{k+1},\mu_{h,2}^{k+1},n_{h,2}^{k+1},\sigma_{h,2}^{k+1},r_{h,2}^{k+1})$
		are two solutions of \eqref{equ:fem euler} with the same data $(u_{h}^{k},\mu_{h}^{k},n_{h}^{k},\sigma_{h}^{k},r_{h}^{k})$ at time level $k$. Set
		\[
		U:=u_{h,1}^{k+1}-u_{h,2}^{k+1},
		\qquad
		M:=\mu_{h,1}^{k+1}-\mu_{h,2}^{k+1},
		\]
		\[
		N:=n_{h,1}^{k+1}-n_{h,2}^{k+1},
		\qquad
		S:=\sigma_{h,1}^{k+1}-\sigma_{h,2}^{k+1},
		\]
		and
		\[
		R:=r_{h,1}^{k+1}-r_{h,2}^{k+1}.
		\]
		Subtracting the two systems gives
		\begin{subequations}\label{equ:lin sys}
			\begin{alignat}{2}
				\label{eq:diff-u-wellposed}
				&\inpro{U}{\phi}
				+
				\tau \inpro{\nabla M}{\nabla \phi}
				-
				\tau\inpro{p_h^k(S-M)}{\phi}
				=0,
				\qquad
				&&\forall \phi\in V_h,
				\\
				\label{eq:diff-mu-wellposed}
				&\inpro{M}{\psi}
				=
				\epsilon^2\inpro{\nabla U}{\nabla \psi}
				+
				\lambda\inpro{U}{\psi}
				-
				\chi_0\inpro{N}{\psi}
				+
				R\inpro{b_h^k}{\psi},
				\qquad
				&&\forall \psi\in V_h,
				\\
				\label{eq:diff-n-wellposed}
				&\inpro{N}{\varphi}
				+
				\tau\inpro{\nabla S}{\nabla \varphi}
				+
				\tau\inpro{p_h^k(S-M)}{\varphi}
				=0,
				\qquad
				&&\forall \varphi\in V_h,
			\end{alignat}
		\end{subequations}
		together with
		\begin{equation}
			\label{eq:diff-sigma-r-wellposed}
			S=\delta^{-1}N,
			\qquad
			R=\frac12\inpro{b_h^k}{U}.
		\end{equation}
		
		Taking $\phi=M$ in \eqref{eq:diff-u-wellposed} and $\varphi=S$ in \eqref{eq:diff-n-wellposed}, and adding the resulting identities, we obtain
		\begin{align}
			\label{eq:first-energy-wellposed}
			0
			&=
			\inpro{U}{M}
			+
			\inpro{N}{S}
			+
			\tau\norm{\nabla M}{L^2}^2
			+
			\tau\norm{\nabla S}{L^2}^2
			- 
			\tau\inpro{p_h^k(S-M)}{M}
			+
			\tau\inpro{p_h^k(S-M)}{S}
			\nonumber\\
			&=
			\inpro{U}{M}
			+
			\inpro{N}{S}
			+
			\tau\norm{\nabla M}{L^2}^2
			+
			\tau\norm{\nabla S}{L^2}^2
			+
			\tau\norm{(p_h^k)^{\frac12} (M-S)}{L^2}^2.
		\end{align}
		Next, choosing $\psi=U$ in \eqref{eq:diff-mu-wellposed} gives
		\[
		\inpro{U}{M}
		=
		\epsilon^2\|\nabla U\|_{L^2}^2
		+
		\lambda\|U\|_{L^2}^2
		-
		\chi_0\inpro{N}{U}
		+
		R\inpro{b_h^k}{U}.
		\]
		By \eqref{eq:diff-sigma-r-wellposed}, we have $R\inpro{b_h^k}{U}=2R^2$ and
		$\inpro{N}{S}
		=
		\delta^{-1}\|N\|_{L^2}^2$.
		Substituting these identities into \eqref{eq:first-energy-wellposed} yields
		\begin{align}
			\label{eq:wellposed-coercive-identity}
			0
			&=
			\epsilon^2 \norm{\nabla U}{L^2}^2
			+
			\lambda \norm{U}{L^2}^2
			-
			\chi_0\inpro{N}{U}
			+
			\delta^{-1} \norm{N}{L^2}^2
			+
			2R^2
			\nonumber\\
			&\qquad
			+
			\tau \norm{\nabla M}{L^2}^2
			+
			\tau \norm{\nabla S}{L^2}^2
			+
			\tau \norm{(p_h^k)^{\frac12} (M-S)}{L^2}^2.
		\end{align}
		Now, by assumption \eqref{equ:ass lambda}, the quadratic form
		$(a,b)\mapsto \lambda a^2-\chi_0 ab+\delta^{-1}b^2$
		is nonnegative on $\mathbb R^2$. Thus,
		\[
		\lambda \norm{U}{L^2}^2
		-
		\chi_0\inpro{N}{U}
		+
		\delta^{-1} \norm{N}{L^2}^2
		\ge0,
		\]
		while other terms on the right-hand side of
		\eqref{eq:wellposed-coercive-identity} are non-negative. This implies
		\begin{align}\label{equ:UMS}
			\nabla U=0,
			\qquad
			\nabla M=0,
			\qquad
			\nabla S=0,
			\qquad
			(p_h^k)^{\frac12}(M-S)=0,
			\qquad
			R=0.
		\end{align}
		Since $1\in V_h$, we may choose $\phi=1$ in \eqref{eq:diff-u-wellposed} to obtain
		\[
		\inpro{U}{1}
		+
		\tau\inpro{p_h^k(M-S)}{1}
		=0.
		\]
		Because $(p_h^k)^{\frac12}(M-S)=0$ a.e. and $p_h^k\ge0$, we have $p_h^k(M-S)=0$ a.e. in $\mathscr D$. Thus, $\inpro{U}{1}=0$.
		Together with $\nabla U=0$, this implies $U=0$. By \eqref{eq:diff-sigma-r-wellposed}, we also have $R=0$.
		
		Similarly, choosing $\varphi=1$ in \eqref{eq:diff-n-wellposed} yields
		\[
		\inpro{N}{1}
		+
		\tau\inpro{p_h^k(S-M)}{1}
		=0.
		\]
		By the same argument as before, $p_h^k(S-M)=0$ a.e. in $\mathscr{D}$, and therefore $\inpro{N}{1}=0$.
		Since $S=\delta^{-1}N$ and $\nabla S=0$ by \eqref{eq:diff-sigma-r-wellposed} and \eqref{equ:UMS}, we also have $\nabla N=0$. Consequently, $N=0$ and $S=\delta^{-1}N=0$. 
		Finally, substituting $U=N=R=0$ into \eqref{eq:diff-mu-wellposed} gives $M=0$.
		
		We infer that there exists a unique solution to the system \eqref{equ:lin sys} and \eqref{eq:diff-sigma-r-wellposed}. The proof is now complete.
	\end{proof}

\subsection{Implementation of the SAV term}

We now describe in more detail the implementation of the SAV term in Algorithm~\ref{alg:sav}.
Although Algorithm~\ref{alg:sav} contains the scalar variable $r_h^{k+1}$ implicitly, this does \emph{not} lead to a nonlinear or genuinely dense system. We explain below how $r_h^{k+1}$ can be eliminated so that the resulting linear system consists of the usual sparse finite element block system plus a rank-one SAV correction, which can be treated efficiently.

Recall that $p_h^k$ and $b_h^k$ were defined in \eqref{equ:phk bhk}.
Then \eqref{equ:fem euler r} can be written as
\[
r_h^{k+1}
=
r_h^k
+
\frac12\inpro{b_h^k}{u_h^{k+1}-u_h^k}.
\]
Substituting this expression into the chemical-potential equation
\eqref{equ:fem euler mu} yields equation \eqref{equ:alg-elim-mu}.
All terms in \eqref{equ:alg-elim-mu} are linear in the unknowns
$(u_h^{k+1},\mu_h^{k+1},n_h^{k+1})$. The only nonlocal term is
\begin{align}\label{equ:nonloc}
\frac12
\inpro{b_h^k}{u_h^{k+1}}
\inpro{b_h^k}{\psi},
\end{align}
which is a product of two global $L^2$ inner products.

We describe the resulting algebraic system. Let
$\{\varphi_i\}_{i=1}^{N_h}$ be a basis of $V_h$, and write
\[
u_h^{k+1}=\sum_{j=1}^{N_h}U_j^{k+1}\varphi_j,
\qquad
\mu_h^{k+1}=\sum_{j=1}^{N_h}\mu_j^{k+1}\varphi_j,
\qquad
n_h^{k+1}=\sum_{j=1}^{N_h}N_j^{k+1}\varphi_j.
\]
Let $\mathbf U^{k+1}$, $\boldsymbol{\mu}^{k+1}$, and $\mathbf N^{k+1}$ denote the corresponding coefficient vectors. Define the mass, stiffness, and weighted mass matrices by
\[
\mathsf M_{ij}:=\inpro{\varphi_j}{\varphi_i},
\qquad
\mathsf S_{ij}:=\inpro{\nabla\varphi_j}{\nabla\varphi_i},
\qquad
\mathsf Q^k_{ij}:=\inpro{p_h^k \varphi_j}{\varphi_i}.
\]
Also define
\[
\gamma_i^k:=\inpro{b_h^k}{\varphi_i},
\qquad
\boldsymbol{\gamma}^k:=(\gamma_i^k)_{i=1}^{N_h}.
\]
After eliminating $\sigma_h^{k+1}$ through $\sigma_h^{k+1}
=
\delta^{-1}n_h^{k+1}-\chi_0u_h^k$, the sparse part of the linear system (not including the term \eqref{equ:nonloc}) for
\[
\mathbf X^{k+1}:=
\begin{pmatrix}
	\mathbf U^{k+1}\\
	\boldsymbol{\mu}^{k+1}\\
	\mathbf N^{k+1}
\end{pmatrix}
\]
is $\mathsf A_0^k\mathbf X^{k+1}
=
\mathbf F^k$,
where
\[
\mathsf A_0^k
=
\begin{pmatrix}
	\tau^{-1}\mathsf M
	&
	\mathsf S+\mathsf Q^k
	&
	-\delta^{-1}\mathsf Q^k
	\\[2mm]
	-(\epsilon^2\mathsf S+\lambda\mathsf M)
	&
	\mathsf M
	&
	\chi_0\mathsf M
	\\[2mm]
	0
	&
	-\mathsf Q^k
	&
	\tau^{-1}\mathsf M+\delta^{-1}(\mathsf S+\mathsf Q^k)
\end{pmatrix}.
\]
The right-hand side $\mathbf F^k$ contains only known quantities from time level $k$:
\[
\mathbf F^k
=
\begin{pmatrix}
	\tau^{-1}\mathsf M\mathbf U^k-\chi_0\mathsf Q^k\mathbf U^k
	\\[1mm]
	\left(
	r_h^k-\frac12(\boldsymbol{\gamma}^k)^\top \mathbf U^k
	\right)\boldsymbol{\gamma}^k
	\\[1mm]
	\tau^{-1}\mathsf M\mathbf N^k
	+
	\chi_0(\mathsf S+\mathsf Q^k)\mathbf U^k
\end{pmatrix}.
\]
The \emph{sparse block matrix} $\mathsf A_0^k$ is nonsingular by the same homogeneous
energy argument used in Proposition~\ref{pro:well-posed}, with the SAV rank-one
term omitted. Equivalently, the argument in Proposition~\ref{pro:well-posed}
applies with the scalar difference $R$ set to zero.

The term \eqref{equ:nonloc} is the only part of \eqref{equ:alg-elim-mu} that
contains the unknown $u_h^{k+1}$ through the SAV update. When this term is moved
to the left-hand side of the chemical-potential equation, it gives the rank-one
contribution
\[
-\frac12
\begin{pmatrix}
	0\\
	\boldsymbol{\gamma}^k\\
	0
\end{pmatrix}
\begin{pmatrix}
	(\boldsymbol{\gamma}^k)^\top & 0 & 0
\end{pmatrix}
\mathbf X^{k+1}.
\]
Hence the full algebraic system can be written as
\begin{equation}\label{equ:rank-one-sav-system}
	\left(
	\mathsf A_0^k
	-
	\frac12
	\mathbf v_{\rm sav}^k
	(\mathbf w_{\rm sav}^k)^\top
	\right)
	\mathbf X^{k+1}
	=
	\mathbf F^k,
\end{equation}
where
\[
\mathbf v_{\rm sav}^k
:=
\begin{pmatrix}
	0\\
	\boldsymbol{\gamma}^k\\
	0
\end{pmatrix},
\qquad
\mathbf w_{\rm sav}^k
:=
\begin{pmatrix}
	\boldsymbol{\gamma}^k\\
	0\\
	0
\end{pmatrix}.
\]
Thus the SAV term is nonlocal at the variational level, but it appears
algebraically only as a rank-one correction to the sparse finite element block
matrix $\mathsf A_0^k$. Therefore, this term need not be assembled as a dense
matrix.

The rank-one correction can be treated efficiently by the Sherman--Morrison
formula~\cite{SheMor50}. First solve the two sparse systems
\[
\mathsf A_0^k\mathbf Y^k=\mathbf F^k,
\qquad
\mathsf A_0^k\mathbf Z^k=\mathbf v_{\rm sav}^k.
\]
Then set $\alpha_{\rm sav}^k
:=
1-\frac12(\mathbf w_{\rm sav}^k)^\top\mathbf Z^k$.
The scalar $\alpha^k$ is nonzero. Indeed, if $\alpha^k=0$, then the rank-one
perturbed matrix in \eqref{equ:rank-one-sav-system} would be singular by the
rank-one determinant identity. This would contradict the unique solvability of
Algorithm~\ref{alg:sav} established in Proposition~\ref{pro:well-posed}.
Thus the following quantity is well-defined:
\[
\vartheta_{\rm sav}^k
:=
\frac{(\mathbf w_{\rm sav}^k)^\top\mathbf Y^k}{\alpha^k}.
\]
The solution of \eqref{equ:rank-one-sav-system} is then
\[
\mathbf X^{k+1}
=
\mathbf Y^k
+
\frac12\mathbf Z^k \vartheta_{\rm sav}^k.
\]
Finally, after extracting $\mathbf U^{k+1}$ from $\mathbf X^{k+1}$, the scalar
auxiliary variable is recovered from
\[
r_h^{k+1}
=
r_h^k
+
\frac12\inpro{b_h^k}{u_h^{k+1}-u_h^k}.
\]
Thus, at each time step, after assembling the sparse block matrix $\mathsf A_0^k$, the computation only requires two sparse linear solves with the same matrix $\mathsf A_0^k$, together with vector inner products and scalar operations. Since $\mathsf A_0^k$ depends on $u_h^k$ through $\mathsf Q^k$, the matrix is generally updated from one time step to the next, but the two solves within each time step share the same coefficient matrix.
This is a low-rank implementation principle used in standard SAV schemes, adapted here to the coupled finite element system.
}

\subsection{Stability and error analysis}

In the remainder of this section, we carry out the stability and error analysis of Algorithm \ref{alg:sav}. The weak solution regularity stated in Definition~\ref{def:weak-solution} is sufficient for the variational formulation of the continuous problem, but the optimal-order error estimates below require additional smoothness of the exact solution. We therefore assume that the exact solution satisfies
\begin{equation}\label{equ:reg u euler}
	u,\mu,n \in L^\infty_T(W^{q+1,\infty}) \cap W^{1,\infty}_T(H^{q+1}) \cap W^{2,\infty}_T(H^2),
\end{equation}
where $q\geq 1$ is the polynomial degree of the finite element space.

{This regularity assumption \eqref{equ:reg u euler} is used only as a sufficient hypothesis to carry out the fully discrete error analysis in $\mathscr{D}$. We do not claim that the above regularity assumption is optimal for this purpose. Such assumptions are standard in optimal-order finite element error estimates for nonlinear parabolic systems, and are compatible with the available strong-solution theory for related diffuse-interface tumour-growth models~\cite{FriGraRoc15,GarYay20}. Under smoother data, compatible boundary conditions, and sufficiently regular domains, higher regularity may be obtained by combining parabolic regularity with elliptic bootstrapping. Since the present paper focuses on the numerical analysis, we do not pursue minimal continuous regularity assumptions here.}

Let us define the modified energy functional $\widetilde{\mathcal{E}}[u_h^k,n_h^k,r_h^k]$ of the finite element solution as
\begin{align}\label{equ:ener modified}
	\widetilde{\mathcal{E}}[u_h^k,n_h^k,r_h^k] 
	:=
	\int_\mathscr{D} \left( \frac{\epsilon^2}{2} \abs{\nabla u_h^k}^2 + \frac{\lambda}{2} \abs{u_h^k}^2 + \frac{1}{2\delta} \abs{n_h^k}^2 - \chi_0 u_h^k n_h^k\right) \dx
    + |r_h^k|^2 -B.
\end{align}
This modified energy functional is an approximation of the original energy $\mathcal{E}[u^k,n^k]$.
We first prove the mass conservation and unconditional energy stability properties of scheme~\eqref{equ:fem euler} in the following proposition.

\begin{proposition}
Let $(u_h^k, \mu_h^k, n_h^k, \sigma_h^k, r_h^k)$ be defined by~\eqref{equ:fem euler}. The following mass conservation law holds, namely for $k=1,2,\ldots,\lfloor T/\tau \rfloor$,
\begin{align}\label{equ:mass cons}
	\int_{\mathscr{D}} (u_h^k+n_h^k) \,\dx = \int_{\mathscr{D}} (u_h^0+n_h^0) \,\dx.
\end{align}
Furthermore, energy dissipation law holds unconditionally for scheme~\eqref{equ:fem euler}:
	\begin{align}\label{equ:disc ener ineq}
		\widetilde{\mathcal{E}}[u_h^{k+1}, n_h^{k+1}, r_h^{k+1}] \leq \widetilde{\mathcal{E}}[u_h^k, n_h^k, r_h^k],
	\end{align}
where $\widetilde{\mathcal{E}}[u_h^k, n_h^k, r_h^k]$ is defined in \eqref{equ:ener modified}.
\end{proposition}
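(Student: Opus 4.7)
The plan is to establish the two claims separately, each by a judicious choice of test functions.

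For the mass conservation identity \eqref{equ:mass cons}, I would test \eqref{equ:fem euler u} with $\phi\equiv 1\in V_h$ and \eqref{equ:fem euler n} with $\varphi\equiv 1\in V_h$. Both gradient terms vanish, and the two proliferation terms are equal and opposite, so adding gives $\inpro{\dtt u_h^{k+1}}{1}+\inpro{\dtt n_h^{k+1}}{1}=0$. Summing from $k=0$ up to the current step telescopes, yielding the stated identity.

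For the energy inequality, the strategy is the standard SAV computation, adapted so that the $n$--equation and the auxiliary $r$--equation together absorb the cross term $-\chi_0\inpro{u_h^{k+1}}{n_h^{k+1}}$ and the nonlinear contribution coming from $f'(u_h^k)$. Specifically, I would:
\begin{enumerate}
    \item test \eqref{equ:fem euler u} with $\tau\mu_h^{k+1}$,
    \item test \eqref{equ:fem euler mu} with $-(u_h^{k+1}-u_h^k)$,
    \item test \eqref{equ:fem euler n} with $\tau\sigma_h^{k+1}$ and use \eqref{equ:fem euler sigma} to write $\sigma_h^{k+1}=\delta^{-1}n_h^{k+1}-\chi_0 u_h^k$,
    \item multiply \eqref{equ:fem euler r} by $\tau$.
\end{enumerate}
Adding (1) and (2) cancels the $\inpro{\mu_h^{k+1}}{u_h^{k+1}-u_h^k}$ pairing; adding the result to (3) eliminates the reaction coupling $\inpro{P(u_h^k)(\sigma_h^{k+1}-\mu_h^{k+1})}{\mu_h^{k+1}}$ in favour of the non-negative quantity $\tau\norm{\sqrt{P(u_h^k)}(\sigma_h^{k+1}-\mu_h^{k+1})}{L^2}^2$; finally adding (4) absorbs the nonlinear term $\frac{r_h^k}{\mathcal{E}_1[u_h^k]+B}\inpro{f'(u_h^k)}{u_h^{k+1}-u_h^k}$ as exactly $r_h^{k+1}-r_h^k$.

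The remaining quadratic pieces are handled by the identity $a(a-b)\geq\tfrac12(a^2-b^2)$, applied to $\epsilon^2\inpro{\nabla u_h^{k+1}}{\nabla(u_h^{k+1}-u_h^k)}$, $\lambda\inpro{u_h^{k+1}}{u_h^{k+1}-u_h^k}$, and $\delta^{-1}\inpro{n_h^{k+1}-n_h^k}{n_h^{k+1}}$. The only mildly delicate bookkeeping is the $\chi_0$--cross term: combining $-\chi_0\inpro{n_h^{k+1}}{u_h^{k+1}-u_h^k}$ from step (2) with $-\chi_0\inpro{n_h^{k+1}-n_h^k}{u_h^k}$ from step (3) telescopes exactly to $-\chi_0\bigl(\inpro{u_h^{k+1}}{n_h^{k+1}}-\inpro{u_h^k}{n_h^k}\bigr)$, which is precisely the increment of the coupling term in $\widetilde{\mathcal{E}}$. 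Collecting everything yields
\[
    \widetilde{\mathcal{E}}[u_h^{k+1},n_h^{k+1},r_h^{k+1}]-\widetilde{\mathcal{E}}[u_h^k,n_h^k,r_h^k]
    \leq
    -\tau\norm{\nabla\mu_h^{k+1}}{L^2}^2
    -\tau\norm{\nabla\sigma_h^{k+1}}{L^2}^2
    -\tau\norm{\sqrt{P(u_h^k)}\,(\sigma_h^{k+1}-\mu_h^{k+1})}{L^2}^2,
\]
and the right-hand side is non-positive since $P\geq 0$.

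The main point to be careful about is the design of the cross term cancellation: it is essential that $\sigma_h^{k+1}$ is defined using $u_h^k$ (rather than $u_h^{k+1}$) in \eqref{equ:fem euler sigma}, for only this explicit coupling makes the telescoping of the $\chi_0$ term work cleanly. Otherwise the steps are algebraic.
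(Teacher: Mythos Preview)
Your proposal is correct and follows essentially the same approach as the paper: both test \eqref{equ:fem euler u} with $\mu_h^{k+1}$, \eqref{equ:fem euler mu} with $-\dtt u_h^{k+1}$ (equivalently $-(u_h^{k+1}-u_h^k)/\tau$), \eqref{equ:fem euler n} with $\sigma_h^{k+1}$, and add \eqref{equ:fem euler r}, then use the identity $2a(a-b)=a^2-b^2+(a-b)^2$. Your explicit bookkeeping of the $\chi_0$ cross-term telescoping and your observation that the explicit choice $\sigma_h^{k+1}=\delta^{-1}n_h^{k+1}-\chi_0 u_h^k$ (rather than $\chi_0 u_h^{k+1}$) is what makes it work are more detailed than the paper's terse proof, but the underlying computation is identical.
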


\begin{proof}
To show \eqref{equ:mass cons}, take $\phi=\varphi=1$ {in \eqref{equ:fem euler u} and \eqref{equ:fem euler n}}, then add the resulting equations.	

Next, we prove \eqref{equ:disc ener ineq}. We set $\phi=\mu_h^{k+1}$, $\psi=-\dtt u_h^{k+1}$, and $\varphi=\sigma_h^{k+1}$. We also multiply \eqref{equ:fem euler sigma} and \eqref{equ:fem euler r} with $-\dtt n_h^{k+1}$ and $2r_h^{k+1}$, respectively, then add the resulting equations. Noting the identity
\begin{equation}\label{equ:ab ide}
	2a(a-b)=\abs{a}^2-\abs{b}^2 + \abs{a-b}^2, \quad \forall a,b\in \bb{R},
\end{equation}
we obtain
\begin{align}\label{equ:Ek1 min Ek}
	&\widetilde{\mathcal{E}}[u_h^{k+1}, n_h^{k+1}, r_h^{k+1}] - \widetilde{\mathcal{E}}[u_h^k, n_h^k, r_h^k]
	\nonumber\\
	&\quad
	+
	\frac{\epsilon^2}{2} \norm{\nabla u_h^{k+1}-\nabla u_h^k}{L^2}^2
	+
	\frac{\lambda}{2} \norm{u_h^{k+1}- u_h^k}{L^2}^2
	+
	\frac{1}{2\delta} \norm{n_h^{k+1}-n_h^k}{L^2}^2
	+
	\abs{r_h^{k+1}-r_h^k}^2
	\nonumber\\
	&\quad
	+
	\tau \norm{\nabla \mu_h^{k+1}}{L^2}^2
	+
	\tau \norm{\nabla \sigma_h^{k+1}}{L^2}^2
	+
	{\tau} \norm{\sqrt{P(u_h^k)} \left(\sigma_h^{k+1}-\mu_h^{k+1}\right)}{L^2}^2
	= 0,
\end{align}
where $P$ is a non-negative function by Assumption~\ref{ass:assum P}.
This implies \eqref{equ:disc ener ineq}.
\end{proof}

We note that \eqref{equ:disc ener ineq} does not directly yield a bound on $\norm{u_h^k}{H^1}$ or $\norm{n_h^k}{L^2}$ since the term $\chi_0 n_h^k u_h^k$ {in \eqref{equ:ener modified}} does not have a definite sign. The following lemma takes care of this issue.

\begin{lemma}\label{lem:stab u H1}
Assume that
\begin{align}\label{equ:ass lambda ener}
	\lambda> \chi_0^2 \delta.
\end{align}
Then, for $k=1,2,\ldots, \lfloor T/\tau \rfloor$,
\begin{align}\label{equ:stab u H1 n L2}
	\norm{u_h^k}{H^1}^2 + \norm{n_h^k}{L^2}^2 + \norm{\sigma_h^k}{L^2}^2 + |r_h^k|^2 \leq C.
\end{align}
Furthermore,
\begin{align}\label{equ:stab sum mu sigma H1}
	\tau \sum_{j=1}^k \norm{\nabla \mu_h^j}{L^2}^2
	+
	\tau \sum_{j=1}^k \norm{\nabla \sigma_h^j}{L^2}^2
	\leq C.
\end{align}
Here, $C$ depends on $T$ (as well as $\epsilon, \lambda, \delta, \chi_0$, and the initial data), but is independent of $k$, $h$, and $\tau$.
\end{lemma}

\begin{proof}
{First, we note that by Young's inequality, for any $\eta>0$,
\[
\chi_0\abs{\inpro{u_h^k}{n_h^k}}
\le
\frac{\eta}{2}\norm{u_h^k}{L^2}^2
+
\frac{\chi_0^2}{2\eta}\norm{n_h^k}{L^2}^2 .
\]
Choosing $\eta$ such that $\chi_0^2\delta<\eta<\lambda$, we obtain
\begin{align*}
	&\frac{\lambda}{2}\norm{u_h^k}{L^2}^2
	+
	\frac{1}{2\delta}\norm{n_h^k}{L^2}^2
	-
	\chi_0\inpro{u_h^k}{n_h^k}
	\ge
	\frac{\lambda-\eta}{2}\norm{u_h^k}{L^2}^2
	+
	\frac12
	\left(
	\frac{1}{\delta}
	-
	\frac{\chi_0^2}{\eta}
	\right)
	\norm{n_h^k}{L^2}^2 .
\end{align*}
Consequently, with this fixed $\eta$, there exist constants $c_u,c_n>0$, depending only on $\lambda,\delta,\chi_0$, such that
\begin{align}\label{equ:eta ineq}
\frac{\lambda}{2}\norm{u_h^k}{L^2}^2
+
\frac{1}{2\delta}\norm{n_h^k}{L^2}^2
-
\chi_0\inpro{u_h^k}{n_h^k}
\ge
c_u\norm{u_h^k}{L^2}^2
+
c_n\norm{n_h^k}{L^2}^2 .
\end{align}
Now, from \eqref{equ:disc ener ineq}, we have for all $k\leq \lfloor T/\tau \rfloor$,
\begin{align*}
	\widetilde{\mathcal{E}}[u_h^k, n_h^k, r_h^k] \leq \widetilde{\mathcal{E}}[u_h^0, n_h^0, r_h^0],
\end{align*}
which implies
\[
\begin{aligned}
	&\frac{\epsilon^2}{2}\norm{\nabla u_h^k}{L^2}^2
	+
	c_u\norm{u_h^k}{L^2}^2
	+
	c_n\norm{n_h^k}{L^2}^2
	+
	|r_h^k|^2
	\\
	&\leq
	\frac{\epsilon^2}{2} \norm{\nabla u_h^k}{L^2}^2
	+
	\frac{\lambda}{2} \norm{u_h^k}{L^2}^2
	+
	\frac{1}{2\delta} \norm{n_h^k}{L^2}^2
	-
	\chi_0\inpro{u_h^k}{n_h^k}
	+
	|r_h^k|^2
	\\
	&\le
	\widetilde{\mathcal E}[u_h^0,n_h^0,r_h^0]+B.
\end{aligned}
\]
The bound for $\sigma_h^k$ follows from the above inequality and \eqref{equ:fem euler sigma}.
This shows~\eqref{equ:stab u H1 n L2}.}

Next, note that \eqref{equ:stab u H1 n L2} in particular implies for all $k\leq \lfloor T/\tau \rfloor$,
\begin{align}\label{equ:abs E bdd}
	\abs{\widetilde{\mathcal{E}}[u_h^k, n_h^k, r_h^k]} \leq C.
\end{align}
Then summing \eqref{equ:Ek1 min Ek} over $j\in \{0,1,\ldots, k-1\}$ yields
\begin{align*}
	\tau \sum_{j=1}^k \norm{\nabla \mu_h^j}{L^2}^2
	+
	\tau \sum_{j=1}^k \norm{\nabla \sigma_h^j}{L^2}^2
	\leq
	\abs{\widetilde{\mathcal{E}}[u_h^k, n_h^k, r_h^k]} 
	+ \widetilde{\mathcal{E}}[u_h^0, n_h^0, r_h^0]
	\leq C,
\end{align*}
by \eqref{equ:abs E bdd}. This shows \eqref{equ:stab sum mu sigma H1}, thus completing the proof.
\end{proof}

\begin{lemma}
For $k=1,2,\ldots, \lfloor T/\tau \rfloor$,
\begin{align}\label{equ:stab sum mu L2}
	\tau \sum_{j=1}^k \norm{\mu_h^j}{L^2}^2 + \tau \sum_{j=1}^k \norm{\Delta_h u_h^j}{L^2}^2 \leq C,
\end{align}
where $C$ depends on $T$, but is independent of $k$, $h$, and $\tau$.
\end{lemma}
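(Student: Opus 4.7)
The plan is to leverage the already-established bound $\tau\sum_j \norm{\nabla \mu_h^j}{L^2}^2 \leq C$ from \eqref{equ:stab sum mu sigma H1} and upgrade it to an $L^2$ bound on $\mu_h^j$ via Poincar\'e--Wirtinger, then invert \eqref{equ:fem euler mu} to read off $\Delta_h u_h^j$ from $\mu_h^j$ and the lower-order data.

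First, I would test \eqref{equ:fem euler mu} with the constant function $\psi\equiv 1\in V_h$, which eliminates the $\nabla u_h^j$ contribution and yields
\begin{equation*}
    \int_\mathscr{D} \mu_h^j \, \dx
    = \lambda \int_\mathscr{D} u_h^j \, \dx
    - \chi_0 \int_\mathscr{D} n_h^j \, \dx
    + \frac{r_h^{j-1}}{\mathcal{E}_1[u_h^{j-1}]+B} \int_\mathscr{D} f'(u_h^{j-1})\, \dx.
\end{equation*}
The first two integrals are bounded uniformly via Cauchy--Schwarz and Lemma~\ref{lem:stab u H1}. The scalar coefficient $r_h^{j-1}/(\mathcal{E}_1[u_h^{j-1}]+B)$ is controlled: the numerator satisfies $\abs{r_h^{j-1}}\leq C_R$ by the induction hypothesis \eqref{equ:induct assum}, while $B$ is chosen so that $\mathcal{E}_1+B\geq c_0>0$ uniformly (possible because $f$ is bounded below by Assumption~\ref{ass:assum f}). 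For the remaining integral, the cubic growth bound $\abs{f'(s)}\leq C(1+\abs{s}^3)$ from \eqref{equ:f der bound} combined with the Sobolev embedding $H^1\hookrightarrow L^6$ (valid for $d\leq 3$) and the uniform $H^1$ bound \eqref{equ:stab u H1 n L2} give $\bigl|\int f'(u_h^{j-1})\bigr|\leq C\bigl(1+\norm{u_h^{j-1}}{H^1}^3\bigr)\leq C$. Denoting the spatial mean by $\bar\mu_h^j$, I conclude $\abs{\bar\mu_h^j}\leq C$ uniformly in $j$.

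The Poincar\'e--Wirtinger inequality then yields $\norm{\mu_h^j}{L^2}^2 \leq C\norm{\nabla \mu_h^j}{L^2}^2 + C$. Multiplying by $\tau$, summing over $j=1,\ldots,k$, and applying \eqref{equ:stab sum mu sigma H1} together with $k\tau\leq T$ controls the first sum in the claim. For the second sum, I would rewrite \eqref{equ:fem euler mu} in $V_h$ using the discrete Laplacian~\eqref{equ:disc laplacian} and the orthogonal projection $\Pi_h$; since $\mu_h^j, u_h^j, n_h^j\in V_h$ while $f'(u_h^{j-1})$ in general does not lie in $V_h$, this gives the pointwise identity in $V_h$
\begin{equation*}
    \epsilon^2 \Delta_h u_h^j
    = -\mu_h^j + \lambda u_h^j - \chi_0 n_h^j + \frac{r_h^{j-1}}{\mathcal{E}_1[u_h^{j-1}]+B} \, \Pi_h f'(u_h^{j-1}).
\end{equation*}
Taking $L^2$-norms, using the $L^2$-stability of $\Pi_h$ from \eqref{equ:proj stable} and the bound on $\norm{f'(u_h^{j-1})}{L^2}$ proved above, I obtain $\norm{\Delta_h u_h^j}{L^2} \leq C\bigl(\norm{\mu_h^j}{L^2} + 1\bigr)$. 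Squaring, multiplying by $\tau$, summing, and inserting the $\mu_h^j$ bound just established closes the estimate.

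The main obstacle is handling the nonlinear term $\int f'(u_h^{j-1})$ appearing in the mean computation: its cubic growth forces use of the three-dimensional critical Sobolev embedding $H^1\hookrightarrow L^6$ together with the uniform $H^1$-bound from Lemma~\ref{lem:stab u H1}, and implicitly relies on the choice of $B$ keeping $\mathcal{E}_1+B$ away from zero. All other steps are essentially linear rearrangements of the scheme.
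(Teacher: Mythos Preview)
Your proof is correct and takes a genuinely different route from the paper's. The paper proceeds by testing \eqref{equ:fem euler u} with $\phi=u_h^{k+1}$ and \eqref{equ:fem euler mu} with $\psi=\Delta_h u_h^{k+1}$, adding them so that the cross term $\inpro{\nabla\mu_h^{k+1}}{\nabla u_h^{k+1}}$ cancels, and then running a telescoping argument in which $\frac{1}{2\tau}(\norm{u_h^{k+1}}{L^2}^2-\norm{u_h^k}{L^2}^2)$ appears and the four right-hand-side terms $I_1,\ldots,I_4$ are estimated individually; this yields the $\Delta_h u_h^j$ bound first, and the $\mu_h^j$ bound is then read off from the operator identity. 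Your approach instead reverses the logic: you control the spatial mean $\bar\mu_h^j$ directly by testing with constants, invoke Poincar\'e--Wirtinger together with the already-available gradient bound \eqref{equ:stab sum mu sigma H1} to get the full $L^2$ bound on $\mu_h^j$, and only then invert \eqref{equ:fem euler mu} to extract $\Delta_h u_h^j$. Your argument is more elementary in that it avoids the telescoping structure and the estimation of the proliferation cross terms $I_1,I_2$; it also makes transparent that the key obstruction is the cubic nonlinearity in the mean of $\mu_h^j$, which is exactly handled by the critical embedding $H^1\hookrightarrow L^6$. The paper's approach, while slightly longer, has the minor advantage of not singling out the mean and of fitting the same test-function-and-sum pattern used in the surrounding lemmas.
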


\begin{proof}
Recall the notations introduced in \eqref{equ:phk bhk} and \eqref{equ:beta k bk}.
Taking $\phi=u_h^{k+1}$ and $\psi=\Delta_h u_h^{k+1}$ in \eqref{equ:fem euler}, then summing the resulting equations yield
\begin{align}\label{equ:sum u I1 I4}
	&\frac{1}{2\tau} \left(\norm{u_h^{k+1}}{L^2}^2 - \norm{u_h^k}{L^2}^2 \right)
	+
	\frac{1}{2\tau} \norm{u_h^{k+1}-u_h^k}{L^2}^2
	+
	\epsilon^2 \norm{\Delta_h u_h^{k+1}}{L^2}^2
	+
	\lambda \norm{\nabla u_h^{k+1}}{L^2}^2
	\nonumber\\
	&=
	\inpro{p_h^k \sigma_h^{k+1}}{u_h^{k+1}}
	-
	\inpro{p_h^k \mu_h^{k+1}}{u_h^{k+1}}
	-
	\chi_0 \inpro{n_h^{k+1}}{\Delta_h u_h^{k+1}}
	+
	\frac{r_h^{k+1}}{\sqrt{\beta_h^k}} \inpro{f'(u_h^k)}{\Delta_h u_h^{k+1}}
	\nonumber\\
	&=:I_1+I_2+I_3+I_4.
\end{align}
We will estimate each term on the last line. For the first term, by \eqref{equ:P sublin}, H\"older's inequality, and the Sobolev embedding $H^1\hookrightarrow L^4$, we have
\begin{align*}
	\abs{I_1} \leq \norm{p_h^k}{L^4} \norm{\sigma_h^{k+1}}{L^2} \norm{u_h^{k+1}}{L^4}
	\leq
	C\left(1+ \norm{u_h^k}{H^1}\right) \norm{\sigma_h^{k+1}}{L^2} \norm{u_h^{k+1}}{H^1}
	\leq C,
\end{align*}
where in the last step we used \eqref{equ:stab u H1 n L2}. Next, for the second term, by H\"older's inequality, \eqref{equ:stab u H1 n L2}, and the definition of $\mu_h^{k+1}$ in \eqref{equ:fem euler}, we obtain
\begin{align*}
	\abs{I_2} 
	&\leq
	\norm{p_h^k}{L^4} \norm{\mu_h^{k+1}}{L^2} \norm{u_h^{k+1}}{L^4}
	\leq
	C\norm{\mu_h^{k+1}}{L^2}
	\\
	&\leq
	C\norm{\Delta_h u_h^{k+1}}{L^2}
	+
	C\norm{u_h^{k+1}}{L^2}
	+
	C\norm{n_h^{k+1}}{L^2}
	+
	C \norm{f'(u_h^k)}{L^2} \norm{\Delta_h u_h^{k+1}}{L^2}
	\\
	&\leq
	C + \frac{\epsilon^2}{4} \norm{\Delta_h u_h^{k+1}}{L^2}^2
	+
	C\norm{u_h^k}{H^1}^6,
\end{align*}
where in the last step we applied \eqref{equ:f der bound}, \eqref{equ:stab u H1 n L2}, Young's inequality, and the Sobolev embedding $H^1\hookrightarrow L^6$. For the term $I_3$, by Young's inequality and \eqref{equ:stab u H1 n L2}, we have
\begin{align*}
	\abs{I_3} 
	&\leq
	C\norm{n_h^{k+1}}{L^2}^2
	+
	\frac{\epsilon^2}{4} \norm{\Delta_h u_h^{k+1}}{L^2}^2
	\leq
	C+ \frac{\epsilon^2}{4} \norm{\Delta_h u_h^{k+1}}{L^2}^2.
\end{align*}
Finally, for the last term, by \eqref{equ:f der bound}, \eqref{equ:stab u H1 n L2}, Young's inequality, and the Sobolev embedding $H^1\hookrightarrow L^6$, we infer
\begin{align*}
	\abs{I_4}
	\leq
	C\norm{f'(u_h^k)}{L^2} \norm{{\Delta_h u_h^{k+1}}}{L^2}
	\leq
	C\norm{u_h^k}{H^1}^6 + \frac{\epsilon^2}{4} \norm{\Delta_h u_h^{k+1}}{L^2}^2
	\leq
	C+ \frac{\epsilon^2}{4} \norm{\Delta_h u_h^{k+1}}{L^2}^2.
\end{align*}
Altogether, continuing from \eqref{equ:sum u I1 I4}, rearranging the terms, and summing over $j\in \{0,1,\ldots, k-1\}$ yield the bound \eqref{equ:stab sum mu L2} for the second term on the left-hand side of \eqref{equ:stab sum mu L2}. Since
\begin{align*}
	\mu_h^{k+1}= -\epsilon^2 \Delta_h u_h^{k+1}+ \lambda u_h^{k+1} - \chi_0 n_h^{k+1} +  \frac{r_h^{k+1}}{\sqrt{\beta_h^k}} \Pi_h \big[f'(u_h^k)\big],
\end{align*}
we can obtain a bound $\norm{\mu_h^{k+1}}{L^2}^2\leq C+ C\norm{\Delta_h u_h^{k+1}}{L^2}^2$ in a straightforward manner. This implies the bound for the first term in \eqref{equ:stab sum mu L2}, thus completing the proof.
\end{proof}

We also obtain uniform bounds for $\norm{n_h^k}{H^1}$ and $\norm{\sigma_h^k}{H^1}$.

\begin{lemma}
For $k=1,2,\ldots, \lfloor T/\tau \rfloor$,
\begin{align}\label{equ:stab n H1}
	\norm{n_h^k}{H^1}^2 + \norm{\sigma_h^k}{H^1}^2 + \tau \sum_{j=1}^k \left(\norm{\Delta_h n_h^j}{L^2}^2 + \norm{\Delta_h \sigma_h^j}{L^2}^2 \right)
	\leq C,
\end{align}
where $C$ depends on $T$, but is independent of $k$, $h$, and $\tau$.
\end{lemma}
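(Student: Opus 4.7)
The plan is to test equation \eqref{equ:fem euler n} at step $k+1$ with $\varphi = -\Delta_h n_h^{k+1} \in V_h$ in order to generate the dissipation terms $\frac{1}{2\tau}\bigl(\norm{\nabla n_h^{k+1}}{L^2}^2 - \norm{\nabla n_h^k}{L^2}^2\bigr)$ on the left and $\delta^{-1}\norm{\Delta_h n_h^{k+1}}{L^2}^2$ on the right, and then to control the remaining terms by quantities that were already shown to be uniformly (or summably) bounded in Lemmas just above.

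After the test, the time-derivative term produces the telescoping gradient square plus a positive remainder via identity \eqref{equ:ab ide} and definition \eqref{equ:disc laplacian}. The leading term from $-\inpro{\nabla\sigma_h^{k+1}}{\nabla(-\Delta_h n_h^{k+1})}$ rewrites as $-\inpro{\Delta_h\sigma_h^{k+1}}{\Delta_h n_h^{k+1}}$, and substituting \eqref{equ:fem euler sigma} for $\sigma_h^{k+1}$ gives $-\delta^{-1}\norm{\Delta_h n_h^{k+1}}{L^2}^2 + \chi_0\inpro{\Delta_h u_h^k}{\Delta_h n_h^{k+1}}$. The cross term is absorbed by Young's inequality, and the tail $\chi_0^2\delta\norm{\Delta_h u_h^k}{L^2}^2/2$ it produces is summable in $\tau$ by \eqref{equ:stab sum mu L2}. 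For the reaction term I bound
\begin{align*}
\bigl|\inpro{P(u_h^k)(\sigma_h^{k+1}-\mu_h^{k+1})}{\Delta_h n_h^{k+1}}\bigr|
&\leq \norm{P(u_h^k)}{L^6}\norm{\sigma_h^{k+1}-\mu_h^{k+1}}{L^3}\norm{\Delta_h n_h^{k+1}}{L^2},
\end{align*}
using the sublinearity \eqref{equ:P sublin} together with $H^1\hookrightarrow L^6$ and \eqref{equ:stab u H1 n L2} to bound the first factor by a constant, and the Sobolev embedding $H^1\hookrightarrow L^3$ for the second. Another application of Young's inequality then leaves $\epsilon\norm{\Delta_h n_h^{k+1}}{L^2}^2 + C\bigl(\norm{\mu_h^{k+1}}{H^1}^2 + \norm{\sigma_h^{k+1}}{H^1}^2\bigr)$, whose $\tau$-sum is controlled by combining \eqref{equ:stab u H1 n L2}, \eqref{equ:stab sum mu sigma H1}, and \eqref{equ:stab sum mu L2}.

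Multiplying by $2\tau$, choosing the Young constants so that $\frac{1}{\delta}\norm{\Delta_h n_h^{k+1}}{L^2}^2$ is only partially absorbed, and summing over $j\in\{0,1,\ldots,k-1\}$ yields
\begin{align*}
\norm{\nabla n_h^k}{L^2}^2 + \tau\sum_{j=1}^k \norm{\Delta_h n_h^j}{L^2}^2 \leq \norm{\nabla n_h^0}{L^2}^2 + C,
\end{align*}
where the initial gradient is bounded using $n_h^0 = R_h n_0$ and the $H^1$-stability of the Ritz projection. Combined with the $L^2$-bound from \eqref{equ:stab u H1 n L2}, this gives $\norm{n_h^k}{H^1}\leq C$. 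The bounds for $\sigma_h^k$ follow from the pointwise identity $\sigma_h^{k+1}=\delta^{-1}n_h^{k+1}-\chi_0 u_h^k$: its $H^1$-norm is dominated by $\norm{n_h^{k+1}}{H^1}+\norm{u_h^k}{H^1}$ which are already bounded, and its discrete Laplacian satisfies $\Delta_h\sigma_h^{k+1}=\delta^{-1}\Delta_h n_h^{k+1}-\chi_0\Delta_h u_h^k$, so $\tau\sum_j\norm{\Delta_h\sigma_h^j}{L^2}^2\leq C$ by what we have just proved and \eqref{equ:stab sum mu L2}.

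The main subtlety I anticipate is controlling the nonlinear reaction term $\inpro{P(u_h^k)(\sigma-\mu)}{\Delta_h n_h^{k+1}}$ without invoking any $L^\infty$ or $H^2$ regularity on $u_h^k$; the resolution is to exploit the three-dimensional Sobolev embeddings $H^1\hookrightarrow L^6$ and $H^1\hookrightarrow L^3$ together with the previously obtained \emph{summable} $H^1$-bounds on $\mu_h^j$ and $\sigma_h^j$, rather than pointwise-in-$k$ bounds which are not yet available at this stage.
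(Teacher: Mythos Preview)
Your proposal is correct and follows essentially the same approach as the paper: test \eqref{equ:fem euler n} with $\varphi=-\Delta_h n_h^{k+1}$, use \eqref{equ:fem euler sigma} to extract $\delta^{-1}\norm{\Delta_h n_h^{k+1}}{L^2}^2$ plus the cross term $\chi_0\inpro{\Delta_h u_h^k}{\Delta_h n_h^{k+1}}$, and control the reaction term via H\"older and the summable $H^1$-bounds on $\mu_h^j$, $\sigma_h^j$. The only cosmetic differences are that the paper splits the reaction into separate $\sigma$- and $\mu$-pieces and uses the $L^4\times L^4$ pairing rather than your $L^6\times L^3$, but both routes lead to the same telescoping sum and the same closing via \eqref{equ:stab sum mu sigma H1} and \eqref{equ:stab sum mu L2}.
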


\begin{proof}
Setting $\varphi= -\Delta_h n_h^{k+1}$ in \eqref{equ:fem euler n} and taking the inner product of \eqref{equ:fem euler sigma} with $\Delta_h^2 n_h^{k+1}$ in \eqref{equ:fem euler}, then adding the resulting equations (noting the definition of $\Delta_h$ in \eqref{equ:disc laplacian}), we obtain
\begin{align}\label{equ:sum nab n J1}
	&\frac{1}{2\tau} \left(\norm{\nabla n_h^{k+1}}{L^2}^2 - \norm{\nabla n_h^k}{L^2}^2 \right)
	+
	\frac{1}{2\tau} \norm{\nabla n_h^{k+1}-\nabla n_h^k}{L^2}^2
	+
	\delta^{-1} \norm{\Delta_h n_h^{k+1}}{L^2}^2
	\nonumber\\
	&=
	\chi_0 \inpro{\Delta_h u_h^k}{\Delta_h n_h^{k+1}}
	-
	\inpro{P(u_h^k)\sigma_h^{k+1}}{\Delta_h n_h^{k+1}}
	+
	\inpro{P(u_h^k) \mu_h^{k+1}}{\Delta_h n_h^{k+1}}
	\nonumber\\
	&=:J_1+J_2+J_3.
\end{align}
Each term on the last line will be estimated as follows. For the first term, by Young's inequality,
\begin{align*}
	\abs{J_1}
	&\leq
	\frac{1}{4\delta} \norm{\Delta_h n_h^{k+1}}{L^2}^2
	+
	C\norm{\Delta_h u_h^k}{L^2}^2.
\end{align*}
Similarly, for the terms $J_2$ and $J_3$, noting \eqref{equ:P sublin}, the Sobolev embedding, and \eqref{equ:stab u H1 n L2}, we have
\begin{align*}
	\abs{J_2}
	&\leq
	\frac{1}{4\delta} \norm{\Delta_h n_h^{k+1}}{L^2}^2
	+
	C\left(1+\norm{u_h^k}{L^4}^2\right) \norm{\sigma_h^{k+1}}{L^4}^2
	\leq  
	\frac{1}{4\delta} \norm{\Delta_h n_h^{k+1}}{L^2}^2
	+
	C\norm{\sigma_h^{k+1}}{H^1}^2,
	\\
	\abs{J_3}
	&\leq
	\frac{1}{4\delta} \norm{\Delta_h n_h^{k+1}}{L^2}^2
	+
	C\left(1+\norm{u_h^k}{L^4}^2\right) \norm{\mu_h^{k+1}}{L^4}^2
	\leq  
	\frac{1}{4\delta} \norm{\Delta_h n_h^{k+1}}{L^2}^2
	+
	C\norm{\mu_h^{k+1}}{H^1}^2.
\end{align*}
Substituting these estimates into \eqref{equ:sum nab n J1}, summing over $j\in \{0,1,\ldots,k-1\}$, and rearranging the terms, we infer
\begin{align*}
	\norm{\nabla n_h^k}{L^2}^2 + \tau \sum_{j=1}^k \norm{\Delta_h n_h^j}{L^2}^2
	\leq
	C\tau \sum_{j=1}^k \left( \norm{\Delta_h u_h^j}{L^2}^2 + \norm{\sigma_h^j}{H^1}^2 + \norm{\mu_h^j}{H^1}^2 \right)
	\leq
	C,
\end{align*}
where in the last step we used \eqref{equ:stab u H1 n L2}, \eqref{equ:stab sum mu sigma H1}, and \eqref{equ:stab sum mu L2}. From this, noting the definition of $\sigma_h^k$ and \eqref{equ:stab sum mu L2}, we deduce \eqref{equ:stab n H1}.
\end{proof}

The following lemma will be needed to establish a bound for $\norm{u_h^k}{L^\infty}$ later.

\begin{lemma}
Let $k \in \{1,2,\ldots,\lfloor T/\tau \rfloor\}$. Let $b_h^k$ and $\beta_h^k$ be defined by \eqref{equ:phk bhk}. For any $\alpha>0$ and $\psi\in V_h$, we have
\begin{align}\label{equ:ineq rhk f}
	\frac{1}{\tau} \left|r_h^{k+1} \inpro{b_h^k}{\psi}
	-
	r_h^k \inpro{b_h^{k-1}}{\psi} \right| 
	&\leq 
	C\norm{\psi}{H^1}^2 + \alpha \norm{\dtt u_h^k}{L^2}^2
	+ \alpha \norm{\dtt u_h^{k+1}}{L^2}^2,
\end{align}
where $C$ depends on $\alpha$ and $T$, but is independent of $k$, $h$, and $\tau$.
\end{lemma}

\begin{proof}
Noting the definition of $b_h^k$ and $\beta_h^k$, we write
\begin{align}\label{equ:1tau k I}
	&\frac{1}{\tau} \left(r_h^{k+1} \inpro{b_h^k}{\psi}
	-
	r_h^k \inpro{b_h^{k-1}}{\psi} \right)
	\nonumber\\
	&=
	\frac{r_h^{k+1}}{\sqrt{\beta_h^k}} \inpro{\frac{f'(u_h^k)-f'(u_h^{k-1})}{\tau}}{\psi}
	+
	\left(\frac{\dtt r_h^{k+1}}{\sqrt{\beta_h^k}} \right)  \inpro{f'(u_h^{k-1})}{\psi}
	\nonumber\\
	&\quad
	+
	\left(\frac{r_h^k}{\sqrt{\beta_h^{k-1} \beta_h^k} \left(\sqrt{\beta_h^{k-1}} + \sqrt{\beta_h^k} \right)} \right)
	\left(\frac{\mathcal{E}_1[u_h^{k-1}]- \mathcal{E}_1[u_h^k]}{\tau}\right) 
	\inpro{f'(u_h^{k-1})}{\psi}
	\nonumber\\
	&=:I_1+I_2+I_3.
\end{align}
For the first term, by H\"older's inequality, \eqref{equ:stab u H1 n L2}, and \eqref{equ:f prime xy}, we have for any $\alpha>0$,
\begin{align}\label{equ:I1 tau}
	\abs{I_1}
	&\leq
	C \left(1+\norm{u_h^k}{L^6}^2 + \norm{u_h^{k-1}}{L^6}^2\right) \norm{\dtt u_h^k}{L^2} \norm{\psi}{L^6}
	\leq
	C\norm{\psi}{H^1}^2 + \alpha \norm{\dtt u_h^k}{L^2}^2,
\end{align}
where in the last step we used Young's inequality, the embedding $H^1\hookrightarrow L^6$, and \eqref{equ:stab u H1 n L2}. Similarly for the term $I_2$, we use the expression for $\dtt r_h^k$ in \eqref{equ:fem euler r} and Young's inequality to obtain
\begin{align}\label{equ:I2 tau}
	\abs{I_2}
	&\leq
	C \norm{f'(u_h^{k-1})}{L^2}^2 \norm{\dtt u_h^{k+1}}{L^2} \norm{\psi}{L^2}
	\leq 
	C\norm{\psi}{L^2}^2 + \alpha \norm{\dtt u_h^{k+1}}{L^2}^2.
\end{align}
Finally, for the last term, applying the definition of $\mathcal{E}_1$, employing \eqref{equ:stab u H1 n L2}, H\"older's inequality, and the Sobolev embedding, we have
\begin{align}\label{equ:I3 tau}
	\abs{I_3}
	&\leq
	C \left(\int_{\mathscr{D}} \abs{\frac{f'(u_h^k)-f'(u_h^{k-1})}{\tau}} \,\dx \right) \norm{f'(u_h^{k-1})}{L^2} \norm{\psi}{L^2}
	\nonumber\\
	&\leq
	C \left(1+ \norm{u_h^k}{L^4}^2 + \norm{u_h^{k-1}}{L^4}^2 \right) \norm{\dtt u_h^k}{L^2} \left(1+ \norm{u_h^{k-1}}{L^6}^3\right) \norm{\psi}{L^2}
	\nonumber\\
	&\leq
	C\norm{\psi}{L^2}^2 + \alpha \norm{\dtt u_h^k}{L^2}^2.
\end{align}
Substituting these estimates back into \eqref{equ:1tau k I}, we obtain \eqref{equ:ineq rhk f}.
\end{proof}

An essential uniform bound on the $L^\infty$ norm of $u_h^k$ can now be shown. In the following, we set $u_h^{-1}=u_h^0$ and $b_h^{-1}=b_h^0$ when these quantities are needed only for notational convenience, and define $\mu_h^0$ by \eqref{equ:fem euler mu} with $k=-1$.

\begin{lemma}
For $k=1,2,\ldots, \lfloor T/\tau \rfloor$,
\begin{align}\label{equ:stab Delta u L2}
	\norm{\mu_h^k}{L^2}^2 + \norm{\Delta_h u_h^k}{L^2}^2 + \tau \sum_{j=1}^k \left(\norm{\dtt u_h^j}{L^2}^2 + \norm{\Delta_h \mu_h^j}{L^2}^2 \right)
	\leq C.
\end{align}
Consequently, we have
\begin{align}\label{equ:stab u Linfty}
	\norm{u_h^k}{L^\infty}^2
    +
    \tau \sum_{j=1}^k \left(\norm{\mu_h^j}{L^\infty}^2 + \norm{n_h^j}{L^\infty}^2 + \norm{\sigma_h^j}{L^\infty}^2 \right) \leq C,
\end{align}
where $C$ depends on $T$, but is independent of $k$, $h$, and $\tau$.
\end{lemma}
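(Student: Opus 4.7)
The plan is to derive a telescoping bound on $\norm{\mu_h^{k+1}}{L^2}^2$ by combining two judiciously chosen test identities. Testing \eqref{equ:fem euler u} with $\phi = \dtt u_h^{k+1}$ yields
\begin{align*}
    \norm{\dtt u_h^{k+1}}{L^2}^2 + \inpro{\nabla \mu_h^{k+1}}{\nabla \dtt u_h^{k+1}} = \inpro{P(u_h^k)(\sigma_h^{k+1}-\mu_h^{k+1})}{\dtt u_h^{k+1}}.
\end{align*}
Separately, taking the discrete time-difference of \eqref{equ:fem euler mu} (valid for $k\geq 1$) and testing with $\psi = \mu_h^{k+1}$ produces
\begin{align*}
    \inpro{\dtt \mu_h^{k+1}}{\mu_h^{k+1}}
    &= \epsilon^2 \inpro{\nabla \dtt u_h^{k+1}}{\nabla \mu_h^{k+1}}
    + \lambda \inpro{\dtt u_h^{k+1}}{\mu_h^{k+1}}
    - \chi_0 \inpro{\dtt n_h^{k+1}}{\mu_h^{k+1}}
    \\
    &\quad + \frac{1}{\tau}\inpro{\gamma^k \Pi_h f'(u_h^k) - \gamma^{k-1}\Pi_h f'(u_h^{k-1})}{\mu_h^{k+1}},
\end{align*}
where $\gamma^k := r_h^k/(\mathcal{E}_1[u_h^k]+B)$. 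Eliminating the common term $\epsilon^2\inpro{\nabla \dtt u_h^{k+1}}{\nabla \mu_h^{k+1}}$ between the two identities, and using the algebraic identity $\inpro{\dtt \mu_h^{k+1}}{\mu_h^{k+1}} = (2\tau)^{-1}(\norm{\mu_h^{k+1}}{L^2}^2 - \norm{\mu_h^k}{L^2}^2) + \tfrac{\tau}{2}\norm{\dtt \mu_h^{k+1}}{L^2}^2$, produces a telescoping inequality of the form
\begin{align*}
    \epsilon^2 \norm{\dtt u_h^{k+1}}{L^2}^2 + \frac{1}{2\tau}\left(\norm{\mu_h^{k+1}}{L^2}^2 - \norm{\mu_h^k}{L^2}^2\right) + \frac{\tau}{2}\norm{\dtt \mu_h^{k+1}}{L^2}^2 = \mathcal{R}^{k+1},
\end{align*}
where $\mathcal{R}^{k+1}$ collects the remaining cross terms in $\inpro{\dtt u_h^{k+1}}{\mu_h^{k+1}}$, $\inpro{\dtt n_h^{k+1}}{\mu_h^{k+1}}$, $\inpro{P(u_h^k)(\sigma_h^{k+1}-\mu_h^{k+1})}{\dtt u_h^{k+1}}$, and the SAV difference.

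To control $\mathcal{R}^{k+1}$, the $\dtt n$ term is converted via \eqref{equ:fem euler n} with $\varphi = \mu_h^{k+1}$ into contributions involving $\inpro{\nabla \sigma}{\nabla \mu}$ and $\inpro{P(\sigma-\mu)}{\mu}$, the SAV difference is bounded by \eqref{equ:ineq rhk f} with $\psi = \mu_h^{k+1}$ (noting that $\Pi_h$ can be dropped against $\mu_h^{k+1} \in V_h$), and the remaining terms are handled by H\"older and Young using the summed bounds $\tau\sum\norm{\mu_h^j}{H^1}^2 \leq C$ and $\tau\sum\norm{\sigma_h^j}{H^1}^2 \leq C$ from the previous lemmas. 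Multiplying by $\tau$, summing over $k = 1, \ldots, m-1$, absorbing small multiples of $\norm{\dtt u_h^{\cdot}}{L^2}^2$ on the left, and bounding $\norm{\mu_h^1}{L^2}$ and $\norm{\dtt u_h^1}{L^2}$ separately from the scheme at $k=0$ together with the regularity of $(u_0, n_0)$, the discrete Gronwall inequality then gives the pointwise bound $\norm{\mu_h^m}{L^2}^2 \leq C$ and the summed bound $\tau\sum_{j=1}^m\norm{\dtt u_h^j}{L^2}^2 \leq C$. The pointwise $\norm{\Delta_h u_h^k}{L^2}^2 \leq C$ then follows immediately from \eqref{equ:fem euler mu}, since $\norm{\Pi_h f'(u_h^k)}{L^2} \leq C(1+\norm{u_h^k}{L^6}^3) \leq C$.

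For the remaining summed bounds, testing \eqref{equ:fem euler u} with $\phi = \Delta_h \mu_h^{k+1} \in V_h$ and using $-\inpro{\nabla \mu_h^{k+1}}{\nabla \Delta_h \mu_h^{k+1}} = \norm{\Delta_h \mu_h^{k+1}}{L^2}^2$ yields, after Young, $\norm{\Delta_h \mu_h^{k+1}}{L^2}^2 \leq 2\norm{\dtt u_h^{k+1}}{L^2}^2 + C(\norm{\sigma_h^{k+1}}{H^1}^2 + \norm{\mu_h^{k+1}}{H^1}^2)$, and summation gives $\tau\sum\norm{\Delta_h \mu_h^j}{L^2}^2 \leq C$. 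Applying $\Delta_h$ to \eqref{equ:fem euler mu} then expresses $\epsilon^2 \Delta_h^2 u_h^{k+1}$ in terms of $\Delta_h \mu_h^{k+1}, \Delta_h u_h^{k+1}, \Delta_h n_h^{k+1}$, and $\gamma^k \Delta_h \Pi_h f'(u_h^k)$; after invoking the $L^\infty$ bound $\norm{u_h^k}{L^\infty} \leq C$, the last term is controlled via the discrete $H^2$-stability $\norm{\Delta_h \Pi_h v}{L^2} \leq C\norm{v}{H^2}$ (derived using the decomposition $\Pi_h = R_h + (\Pi_h - R_h)$ with approximation and inverse estimates on quasi-uniform meshes) together with an elementwise $H^2$ estimate on $f'(u_h^k)$ that relies on $\norm{u_h^k}{L^\infty}$. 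The $L^\infty$ bound $\norm{u_h^k}{L^\infty}^2 \leq C$ itself is obtained from \eqref{equ:disc lapl L infty} applied with the pointwise $\norm{\Delta_h u_h^k}{L^2} \leq C$ and $\norm{u_h^k}{L^6} \leq C\norm{u_h^k}{H^1} \leq C$, and the summed $L^\infty$ bounds for $\mu_h^j, n_h^j, \sigma_h^j$ follow analogously from \eqref{equ:disc lapl L infty} combined with the summed $L^2$ bounds on $\Delta_h \mu_h^j$, $\Delta_h n_h^j$, and $\Delta_h \sigma_h^j$ (the latter from $\eqref{equ:fem euler sigma}$). The principal obstacle is the careful orchestration of the telescoping estimate so that \eqref{equ:ineq rhk f} handles the SAV nonlinearity without generating an uncontrolled $\norm{\nabla \dtt u_h^{k+1}}{L^2}^2$ term on the left-hand side; this forces the specific pairing of test functions above, and an alternative such as $\psi = \dtt u_h^{k+1}$ would close the SAV estimate but leave a $\norm{\nabla \dtt u_h^{k+1}}{L^2}^2$ quantity for which no route to a bound is available at this stage.
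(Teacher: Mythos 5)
Your proposal follows essentially the same route as the paper's proof: the same pairing of test functions ($\phi=\epsilon^2\dtt u_h^{k+1}$ in the $u$-equation, $\psi=\mu_h^{k+1}/\tau$ in the time-differenced $\mu$-equation, $\varphi=-\chi_0\mu_h^{k+1}$ in the $n$-equation) so that the cross term $\epsilon^2\inpro{\nabla\dtt u_h^{k+1}}{\nabla\mu_h^{k+1}}$ cancels, the same use of \eqref{equ:ineq rhk f} on the SAV increment, the same recovery of $\norm{\Delta_h u_h^k}{L^2}$ from \eqref{equ:fem euler mu}, and the same appeal to \eqref{equ:disc lapl L infty} for the $L^\infty$ bounds. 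The one place where your added detail is not sound is the bound on $\tau\sum_j\norm{\Delta_h^2 u_h^j}{L^2}^2$: the stability $\norm{\Delta_h\Pi_h v}{L^2}\le C\norm{v}{H^2}$ cannot be applied with $v=f'(u_h^k)$, since $f'(u_h^k)$ is only elementwise $H^2$ (its normal derivative jumps across element faces, so it is not in $H^2(\mathscr{D})$), and a broken-$H^2$ version of that stability fails by an inverse-estimate factor --- this term requires a different treatment, although the paper itself passes over it with only ``by a similar argument.''
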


\begin{proof}
Recall the notations in \eqref{equ:phk bhk}.
First, we subtract \eqref{equ:fem euler mu} at time step $k$ from the same equation at time step $k+1$ to obtain
\begin{align*}
	\inpro{\mu_h^{k+1}-\mu_h^k}{\psi}
	&=
	\epsilon^2 \inpro{\nabla u_h^{k+1}-\nabla u_h^k}{\nabla \psi}
	+
	\lambda \inpro{u_h^{k+1}-u_h^k}{\psi}
	-
	\chi_0 \inpro{n_h^{k+1}-n_h^k}{\psi}
	\nonumber\\
	&\quad
	+
	r_h^{k+1} \inpro{b_h^k}{\psi}
	-
	r_h^k\inpro{b_h^{k-1}}{\psi}
	, \qquad \forall \psi\in V_h.
\end{align*}
Next, we set $\psi=\mu_h^{k+1}/\tau$ in the above equation and take $\phi=\epsilon^2 \dtt u_h^{k+1}$ in \eqref{equ:fem euler} to obtain
\begin{align}
	\label{equ:eps dtuh L2}
	\epsilon^2 \norm{\dtt u_h^{k+1}}{L^2}^2
	&=
	-\epsilon^2 \inpro{\nabla \mu_h^{k+1}}{\nabla \dtt u_h^{k+1}}
	+
	\epsilon^2 \inpro{p_h^k (\sigma_h^{k+1}-\mu_h^{k+1})}{\dtt u_h^{k+1}},
\end{align}
and
\begin{align} 
	\label{equ:mu k mu k1}
	&\frac{1}{2\tau} \left(\norm{\mu_h^{k+1}}{L^2}^2 - \norm{\mu_h^k}{L^2}^2\right) 
	+
	\frac{1}{2\tau} \norm{\mu_h^{k+1}-\mu_h^k}{L^2}^2
	\nonumber\\
	&=
	\epsilon^2 \inpro{\nabla\dtt u_h^{k+1}}{\nabla \mu_h^{k+1}}
	+
	\lambda \inpro{\dtt u_h^{k+1}}{\mu_h^{k+1}}
	-
	\chi_0 \inpro{\dtt n_h^{k+1}}{\mu_h^{k+1}}
	\nonumber\\
	&\quad
	+
	\frac{1}{\tau} \left(r_h^{k+1} \inpro{b_h^k}{\mu_h^{k+1}}
	-
	r_h^k \inpro{b_h^{k-1}}{\mu_h^{k+1}}\right).
\end{align}
Furthermore, taking $\varphi=-\chi_0 \mu_h^{k+1}$ gives
\begin{align}\label{equ:chi 0 dtn mu}
	-\chi_0 \inpro{\dtt n_h^{k+1}}{\mu_h^{k+1}}
	&=
	\chi_0 \inpro{\nabla \sigma_h^{k+1}}{\nabla \mu_h^{k+1}}
	+
	\chi_0 \inpro{p_h^k \sigma_h^{k+1}}{\mu_h^{k+1}}
	-
	\chi_0 \norm{\sqrt{p_h^k} \mu_h^{k+1}}{L^2}^2.
\end{align}
Adding \eqref{equ:eps dtuh L2}, \eqref{equ:mu k mu k1}, and \eqref{equ:chi 0 dtn mu} yields
\begin{align}\label{equ:mu k1 mu k}
	&\frac{1}{2\tau} \left(\norm{\mu_h^{k+1}}{L^2}^2 - \norm{\mu_h^k}{L^2}^2\right) 
	+
	\frac{1}{2\tau} \norm{\mu_h^{k+1}-\mu_h^k}{L^2}^2
	+
	\epsilon^2 \norm{\dtt u_h^{k+1}}{L^2}^2
	+
	\chi_0 \norm{\sqrt{p_h^k} \mu_h^{k+1}}{L^2}^2
	\nonumber\\
	&=
	\epsilon^2 \inpro{p_h^k (\sigma_h^{k+1}-\mu_h^{k+1})}{\dtt u_h^{k+1}}
	+
	\lambda \inpro{\dtt u_h^{k+1}}{\mu_h^{k+1}}
	+
	\chi_0 \inpro{\nabla \sigma_h^{k+1}}{\nabla \mu_h^{k+1}}
	\nonumber\\
	&\quad
	+
	\chi_0 \inpro{p_h^k \sigma_h^{k+1}}{\mu_h^{k+1}}
	+
	\frac{1}{\tau} \left(r_h^{k+1} \inpro{b_h^k}{\mu_h^{k+1}}
	-
	r_h^k \inpro{b_h^{k-1}}{\mu_h^{k+1}}\right)
	\nonumber\\
	&=:J_1+J_2+J_3+J_4+J_5.
\end{align}
Each term on the last line will be estimated as follows. For the first term, by Young's inequality and \eqref{equ:P sublin}, we have
\begin{align*}
	\abs{J_1}
	&\leq
	C\left(1+\norm{u_h^k}{L^4}^2 \right) \norm{\sigma_h^{k+1}-\mu_h^{k+1}}{L^4}^2
	+
	\frac{\epsilon^2}{4} \norm{\dtt u_h^{k+1}}{L^2}^2
	\\
	&\leq
	C\left(1+\norm{\mu_h^{k+1}}{H^1}^2\right) + \frac{\epsilon^2}{4} \norm{\dtt u_h^{k+1}}{L^2}^2,
\end{align*}
where in the last step we also used \eqref{equ:stab n H1} and \eqref{equ:stab u H1 n L2}. For the term $J_2$, by Young's inequality,
\begin{align*}
	\abs{J_2} \leq C\norm{\mu_h^{k+1}}{L^2}^2 +\frac{\epsilon^2}{4} \norm{\dtt u_h^{k+1}}{L^2}^2. 
\end{align*}
For the terms $J_3$ and $J_4$, we apply Young's inequality to obtain
\begin{align*}
	\abs{J_3}
	&\leq
	C\norm{\nabla \sigma_h^{k+1}}{L^2}^2
	+
	C\norm{\nabla \mu_h^{k+1}}{L^2}^2,
	\\
	\abs{J_4}
	&\leq
	C \left(1+\norm{u_h^{k+1}}{L^4}^2\right) \norm{\sigma_h^{k+1}}{L^4}^2
	+
	C\norm{\mu_h^{k+1}}{L^2}^2
	\leq
	C\left(1+ \norm{\mu_h^{k+1}}{L^2}^2 \right),
\end{align*}
where in the last step for $J_4$ we also used \eqref{equ:stab n H1} and \eqref{equ:stab u H1 n L2}. Finally, for the term $J_5$, we apply \eqref{equ:ineq rhk f} with $\psi=\mu_h^{k+1}$ and $\alpha=\epsilon^2/4$ to infer
\begin{align*}
	\abs{J_5}
	\leq
	C\norm{\mu_h^{k+1}}{H^1}^2 + \frac{\epsilon^2}{4} \norm{\dtt u_h^k}{L^2}^2 + \frac{\epsilon^2}{4} \norm{\dtt u_h^{k+1}}{L^2}^2.
\end{align*}
Altogether, substituting these estimates back into \eqref{equ:mu k1 mu k} and rearranging the terms, we obtain
\begin{align*}
	\left(\norm{\mu_h^{k+1}}{L^2}^2 - \norm{\mu_h^k}{L^2}^2\right) 
	+
	\frac{\epsilon^2 \tau}{2} \left(\norm{\dtt u_h^{k+1}}{L^2}^2- \norm{\dtt u_h^k}{L^2}^2 \right)
	\leq
	C\tau\left(1+ \norm{\mu_h^{k+1}}{H^1}^2 \right).
\end{align*}
Summing this over $j\in \{0,1,\ldots,k-1\}$, setting $u_h^{-1}=u_h^0$ so that $\dtt u_h^0=0$, and noting \eqref{equ:stab sum mu sigma H1} and \eqref{equ:stab sum mu L2}, we deduce
\begin{align}\label{equ:mu dt 1}
	\norm{\mu_h^k}{L^2}^2 + \tau \sum_{j=1}^k \norm{\dtt u_h^j}{L^2}^2 \leq C.
\end{align}
Now, by \eqref{equ:fem euler mu}, we have
\begin{align*}
	\epsilon^2 \Delta_h u_h^{k+1}= \mu_h^{k+1}-\lambda u_h^{k+1} + \chi_0 n_h^{k+1} -  r_h^{k+1}
	\Pi_h \big[b_h^k \big],
\end{align*}
which implies by \eqref{equ:stab u H1 n L2} and \eqref{equ:mu dt 1},
\begin{align}\label{equ:Delta uk L2}
	\norm{\Delta_h u_h^{k+1}}{L^2}^2
	&\leq
	C\norm{\mu_h^{k+1}}{L^2}^2
	+
	C\norm{u_h^{k+1}}{L^2}^2
	+
	C\norm{n_h^{k+1}}{L^2}^2
	+
	C \left(1+ \norm{u_h^k}{L^6}^6\right) \leq C.
\end{align}
By a similar argument, but using \eqref{equ:fem euler u} instead, we obtain
\begin{align*}
	\tau\Delta_h \mu_h^j= \tau \dtt u_h^j+ \tau \Pi_h \left[p_h^{j-1} (\sigma_h^j-\mu_h^j)\right].
\end{align*}
Summing this over $j\in \{1,2,\ldots,k\}$ and using \eqref{equ:stab n H1} and \eqref{equ:mu dt 1}, we infer that
\begin{align}\label{equ:sum Delta h2}
	\tau \sum_{j=1}^k \norm{\Delta_h \mu_h^j}{L^2}^2
	\leq C.
\end{align}
Estimate \eqref{equ:stab Delta u L2} then follows from \eqref{equ:mu dt 1}, \eqref{equ:Delta uk L2}, and \eqref{equ:sum Delta h2}. 

Finally, \eqref{equ:stab u Linfty} follows from \eqref{equ:disc lapl L infty}, \eqref{equ:stab Delta u L2}, and \eqref{equ:stab n H1}, noting the embedding $H^1\hookrightarrow L^6$.
\end{proof}

The following lemma will be needed to derive further uniform estimates.

\begin{lemma}
Let $k \in \{1,2,\ldots,\lfloor T/\tau \rfloor\}$. {Let $b_h^k$ and $\beta_h^k$ be defined by \eqref{equ:phk bhk}.} For any $\alpha>0$, there exists a constant $C>0$ such that for all $\psi\in V_h$,
\begin{align}\label{equ:ineq 2 rhk f}
	\frac{1}{\tau} \left|r_h^{k+1} \inpro{b_h^k}{\psi}
	-
	r_h^k \inpro{b_h^{k-1}}{\psi} \right| 
	&\leq 
	C\norm{\dtt u_h^k}{L^2}^2 + C\norm{\dtt u_h^{k+1}}{L^2}^2
	+ \alpha \norm{\psi}{L^2}^2.
\end{align}
Here, $C$ depends on $\alpha$ and $T$, but is independent of $k$, $h$, and $\tau$.
\end{lemma}

\begin{proof}
We employ the decomposition \eqref{equ:1tau k I}, but we will estimate $I_1$, $I_2$, and $I_3$ slightly differently here, in light of the estimate \eqref{equ:stab u Linfty}. Similarly to \eqref{equ:I1 tau}, for the term $I_1$, we have for any $\alpha>0$,
\begin{align*}
	\abs{I_1}
	&\leq
	C \left(1+\norm{u_h^k}{L^\infty}^2 + \norm{u_h^{k-1}}{L^\infty}^2\right) \norm{\dtt u_h^k}{L^2} \norm{\psi}{L^2}
	\leq
	C\norm{\dtt u_h^k}{L^2}^2 + \alpha \norm{\psi}{L^2}^2.
\end{align*}
For the terms $I_2$ and $I_3$, by the same argument as in \eqref{equ:I2 tau} and \eqref{equ:I3 tau}, but swapping the roles of $C$ and $\alpha$ there, we obtain
\begin{align*}
	\abs{I_2}+\abs{I_3}
	&\leq
	C\norm{\dtt u_h^k}{L^2}^2 + C\norm{\dtt u_h^{k+1}}{L^2}^2 + \alpha \norm{\psi}{L^2}^2.
\end{align*}
This completes the proof of \eqref{equ:ineq 2 rhk f}.
\end{proof}

From the above lemma, we deduce the following bounds on the discrete time derivatives of $u_h^k$ and $n_h^k$.

\begin{lemma}
For $k=1,2,\ldots, \lfloor T/\tau \rfloor$,
	\begin{align}\label{equ:stab dtu L2}
		\norm{\dtt u_h^k}{L^2}^2
		+
		\norm{\dtt n_h^k}{L^2}^2
		+
		\tau \sum_{j=1}^k \left(\norm{\nabla \dtt u_h^j}{L^2}^2 + \norm{\Delta_h \dtt u_h^j}{L^2}^2 + \norm{\dtt \mu_h^j}{L^2}^2 + \norm{\nabla \dtt n_h^j}{L^2}^2 \right) 
		\leq C.
	\end{align}
	In particular, this implies
	\begin{align}\label{equ:stab uk1 uk L2}
		\norm{u_h^{k+1}-u_h^k}{L^2} + \norm{n_h^{k+1}-n_h^k}{L^2} \leq C\tau,
	\end{align}
	where $C$ depends on $T$, but is independent of $k$, $h$, and $\tau$.
\end{lemma}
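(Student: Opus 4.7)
The plan is to mirror the energy-law derivation \eqref{equ:Ek1 min Ek} at the level of discrete time derivatives. Writing $d_v^{k+1} := \dtt v^{k+1}$ for $v \in \{u_h, \mu_h, n_h, \sigma_h\}$, I subtract \eqref{equ:fem euler u}--\eqref{equ:fem euler n} at step $k$ from step $k+1$ and divide by $\tau$ to obtain coupled evolution equations for $d_u^{k+1}, d_\mu^{k+1}, d_n^{k+1}$ whose source terms are $\dtt\inpro{P(u_h^k)(\sigma_h^{k+1}-\mu_h^{k+1})}{\cdot}$ and $\dtt\bigl(\frac{r_h^k}{\mathcal{E}_1[u_h^k]+B}\inpro{f'(u_h^k)}{\cdot}\bigr)$, together with the algebraic relation $d_\sigma^{k+1} = \delta^{-1} d_n^{k+1} - \chi_0 d_u^k$. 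With the initialisation $u_h^{-1}=u_h^0$, $n_h^{-1}=n_h^0$, $\mu_h^{-1}=\mu_h^0$ (the last defined by \eqref{equ:fem euler mu} evaluated at $k=0$), one has $d_u^0=d_n^0=d_\mu^0=0$, which furnishes the starting point for the summation.

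For the main estimate, I would test the differenced (a) with $\phi = d_\mu^{k+1}$, the differenced (b) with $\psi = -\dtt d_u^{k+1}$, and the differenced (c) with $\varphi = d_\sigma^{k+1}$, then add. The cross products $\inpro{\dtt d_u^{k+1}}{d_\mu^{k+1}}$ cancel between (a) and (b); the identity \eqref{equ:ab ide} converts $\inpro{\nabla d_u^{k+1}}{\nabla\dtt d_u^{k+1}}$, $\inpro{d_u^{k+1}}{\dtt d_u^{k+1}}$ and $\inpro{\dtt d_n^{k+1}}{d_n^{k+1}}$ into telescoping differences of squared norms; and the mixed contributions $-\chi_0\inpro{d_n^{k+1}}{\dtt d_u^{k+1}} - \chi_0\inpro{\dtt d_n^{k+1}}{d_u^k}$ collapse into the single telescoping term $-\chi_0\dtt\inpro{d_n^{k+1}}{d_u^{k+1}}$. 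Multiplying by $2\tau$ yields an identity for the modified differenced energy $\widetilde{E}^k := \delta^{-1}\norm{d_n^k}{L^2}^2 + \epsilon^2\norm{\nabla d_u^k}{L^2}^2 + \lambda\norm{d_u^k}{L^2}^2 - 2\chi_0\inpro{d_n^k}{d_u^k}$, which is coercive in $\norm{d_u^k}{H^1}^2 + \norm{d_n^k}{L^2}^2$ by the choices $\lambda = 4\chi_0^2$ and $\delta \in (0,1)$, with dissipation $2\tau\norm{\nabla d_\mu^{k+1}}{L^2}^2 + 2\tau\norm{\nabla d_\sigma^{k+1}}{L^2}^2$.

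The right-hand side carries two nonlinear contributions. The proliferation residual is expanded via the discrete Leibniz rule $\dtt(AB) = (\dtt A)B^k + A^{k+1}\dtt B$, producing the favourable term $-2\tau\norm{\sqrt{P(u_h^k)}(d_\mu^{k+1}-d_\sigma^{k+1})}{L^2}^2$ and a cross term containing $\tau^{-1}(P(u_h^k)-P(u_h^{k-1}))$, which is bounded pointwise by $C|d_u^k|$ via \eqref{equ:P lipschitz} and then controlled using the $L^\infty$ bounds \eqref{equ:stab u Linfty}, Sobolev embeddings, and Young's inequality. The SAV residual $-2\tau\dtt[\mathrm{SAV}]\cdot\dtt d_u^{k+1}$ is the principal obstacle, as it involves the second-order discrete time derivative of $u_h$ for which no a priori bound is available. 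I would tame it by invoking \eqref{equ:ineq 2 rhk f} with $\psi = \dtt d_u^{k+1}$ and the free parameter $\alpha$ chosen proportional to $\tau$: the resulting $\alpha\tau^{-1}\norm{d_u^{k+1}-d_u^k}{L^2}^2$ is absorbed by the stabilising $\lambda\norm{d_u^{k+1}-d_u^k}{L^2}^2$ contributed by \eqref{equ:ab ide}, while the outer factor $\tau$ compensates the $\alpha^{-1}$ growth of the constant in \eqref{equ:ineq 2 rhk f}, leaving a $k$-uniform coefficient in front of $\norm{d_u^k}{L^2}^2$ that feeds into the Gronwall argument.

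Summing from $j=0$ to $k-1$ and applying the discrete Gronwall inequality then yields the uniform bounds on $\norm{d_u^k}{H^1} + \norm{d_n^k}{L^2}$ together with $\tau\sum_{j=1}^k\norm{\nabla d_\mu^j}{L^2}^2 + \tau\sum_{j=1}^k\norm{\nabla d_\sigma^j}{L^2}^2 \leq C$. The bounds on $\tau\sum_{j=1}^k\norm{d_\mu^j}{L^2}^2$ and $\tau\sum_{j=1}^k\norm{\Delta_h d_u^j}{L^2}^2$ follow by rearranging the differenced form of \eqref{equ:fem euler mu} as $\epsilon^2\Delta_h d_u^{k+1} = -d_\mu^{k+1} + \lambda d_u^{k+1} - \chi_0 d_n^{k+1} + \Pi_h\bigl[\dtt\bigl(\tfrac{r_h^k}{\mathcal{E}_1[u_h^k]+B}f'(u_h^k)\bigr)\bigr]$, controlling the SAV term in $L^2$ by $C\norm{d_u^k}{L^2}$ via a duality argument using \eqref{equ:ineq 2 rhk f}, and inserting the bounds already obtained. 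The bound on $\tau\sum_{j=1}^k\norm{\nabla d_n^j}{L^2}^2$ comes from testing the differenced (c) with $-\Delta_h d_n^{k+1}$ in the same spirit as the proof of \eqref{equ:stab n H1}. Finally, \eqref{equ:stab uk1 uk L2} is immediate from $u_h^{k+1}-u_h^k = \tau d_u^{k+1}$ and the analogous identity for $n_h$.
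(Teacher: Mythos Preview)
Your energy-law-mimicking strategy is natural, but the handling of the SAV residual contains a genuine gap. When you test the differenced \eqref{equ:fem euler mu} with $\psi=-\dtt d_u^{k+1}$ and invoke \eqref{equ:ineq 2 rhk f}, the term $\alpha\norm{\dtt d_u^{k+1}}{L^2}^2=\alpha\tau^{-2}\norm{d_u^{k+1}-d_u^k}{L^2}^2$ can only be absorbed by the stabilisation term $\lambda\norm{d_u^{k+1}-d_u^k}{L^2}^2$, which forces $\alpha=O(\tau)$ as you note. Since the constant $C$ in \eqref{equ:ineq 2 rhk f} scales like $\alpha^{-1}$, the surviving coefficient in front of $\norm{d_u^k}{L^2}^2$ is $\tau\cdot O(\tau^{-1})=O(1)$, \emph{not} $O(\tau)$. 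A discrete Gronwall inequality with $O(1)$ coefficients over $k\sim T/\tau$ steps produces a bound growing like $\exp(C/\tau)$, which is useless. Appealing instead to the prior estimate $\tau\sum_j\norm{\dtt u_h^j}{L^2}^2\le C$ from \eqref{equ:stab Delta u L2} does not help either, since the missing factor of $\tau$ still leaves an $O(\tau^{-1})$ bound.

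The paper circumvents this by choosing different test functions: $\phi=2\epsilon^2\dtt u_h^{k+1}$ in the differenced \eqref{equ:fem euler u}, $\psi=\Delta_h\dtt u_h^{k+1}$ and separately $\psi=\dtt\mu_h^{k+1}$ in the differenced \eqref{equ:fem euler mu}, and $\varphi=2\dtt n_h^{k+1}$ in the differenced \eqref{equ:fem euler n}. This generates the dissipation terms $\epsilon^2\tau\norm{\Delta_h\dtt u_h^{k+1}}{L^2}^2$ and $\tau\norm{\dtt\mu_h^{k+1}}{L^2}^2$ directly on the left-hand side, so that when \eqref{equ:ineq 2 rhk f} is applied with $\psi=\Delta_h\dtt u_h^{k+1}$ or $\psi=\dtt\mu_h^{k+1}$, the $\alpha$-term matches a dissipation term and $\alpha$ may be taken $O(1)$. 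The coefficient of $\norm{\dtt u_h^k}{L^2}^2$ then remains $O(\tau)$, and Gronwall closes. Your approach could conceivably be rescued by a discrete summation-by-parts on the SAV$\cdot\dtt d_u^{k+1}$ pairing, but as written the argument does not go through.
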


\begin{proof}
We subtract \eqref{equ:fem euler u}, \eqref{equ:fem euler mu}, \eqref{equ:fem euler n} at time step $k$ from the same equation at time step $k+1$ to obtain
\begin{align}
	\label{equ:u k1 min u k}
	&\inpro{\dtt u_h^{k+1}-\dtt u_h^k}{\phi}
	+
	\inpro{\nabla \mu_h^{k+1}-\nabla \mu_h^k}{\nabla \phi}
	\nonumber\\
	&=
	\inpro{p_h^k \big(\delta^{-1} n_h^{k+1}-\chi_0 u_h^k - \mu_h^{k+1}\big)- p_h^{k-1} \big(\delta^{-1} n_h^k-\chi_0 u_h^{k-1} - \mu_h^k\big)}{\phi}, \qquad \forall \phi \in V_h,
\end{align}
and
\begin{align}
	\label{equ:mu k1 min mu k}
	\inpro{\mu_h^{k+1}-\mu_h^k}{\psi}
	&=
	\epsilon^2 \inpro{\nabla u_h^{k+1}-\nabla u_h^k}{\nabla \psi}
	+
	\lambda \inpro{u_h^{k+1}-u_h^k}{\psi}
	-
	\chi_0 \inpro{n_h^{k+1}-n_h^k}{\psi}
	\nonumber\\
	&\quad
	+
	r_h^{k+1} \inpro{b_h^k}{\psi}
	-
	r_h^k \inpro{b_h^{k-1}}{\psi}
	, \qquad \forall \psi\in V_h,
\end{align}
as well as
\begin{align}\label{equ:n k1 min n}
	&\inpro{\dtt n_h^{k+1} -\dtt n_h^k}{\varphi}
	\nonumber\\
	&=
	-
	\delta^{-1} \inpro{\nabla n_h^{k+1}-\nabla n_h^k}{\nabla \varphi}
	-
	\chi_0 \inpro{\nabla u_h^k-\nabla u_h^{k-1}}{\nabla \varphi}
	\nonumber\\
	&\quad
	-
	\inpro{p_h^k \big(\delta^{-1} n_h^{k+1}-\chi_0 u_h^k - \mu_h^{k+1}\big)- p_h^{k-1}  \big(\delta^{-1} n_h^k-\chi_0 u_h^{k-1} - \mu_h^k\big)}{\varphi}, \qquad \forall \varphi\in V_h.
\end{align}

Now, taking $\phi=2\epsilon^2 \dtt u_h^{k+1}$ in \eqref{equ:u k1 min u k} and $\psi=\Delta_h \dtt u_h^{k+1}$ in \eqref{equ:mu k1 min mu k}, we obtain
\begin{align}\label{equ:dt min dt}
	&\epsilon^2 \left(\norm{\dtt u_h^{k+1}}{L^2}^2 - \norm{\dtt u_h^k}{L^2}^2 \right)
	+
	\epsilon^2 \norm{\dtt u_h^{k+1}-\dtt u_h^k}{L^2}^2
	+
	2\epsilon^2 \inpro{\nabla \mu_h^{k+1}-\nabla \mu_h^k}{\nabla \dtt u_h^{k+1}}
	\nonumber\\
	&=
	\epsilon^2 \inpro{p_h^k\big(\delta^{-1} n_h^{k+1}-\chi_0 u_h^k - \mu_h^{k+1}\big)- p_h^{k-1} \big(\delta^{-1} n_h^k-\chi_0 u_h^{k-1} - \mu_h^k\big)}{\dtt u_h^{k+1}}
	\nonumber\\
	&=
	\epsilon^2 \inpro{\big(p_h^k-p_h^{k-1}\big) \big(\delta^{-1} n_h^{k+1}-\chi_0 u_h^k - \mu_h^{k+1}\big)}{\dtt u_h^{k+1}}
	\nonumber\\
	&\quad
	+
	\epsilon^2 \tau 
	\inpro{p_h^{k-1} \big(\delta^{-1} \dtt n_h^{k+1}- \chi_0 \dtt u_h^k - \dtt \mu_h^{k+1} \big)}{\dtt u_h^{k+1}}
	\nonumber\\
	&=: A_1+A_2,
\end{align}
and
\begin{align}\label{equ:nab mu min nab mu}
	&\epsilon^2 \tau \norm{\Delta_h \dtt u_h^{k+1}}{L^2}^2 
	+
	\lambda \tau \norm{\nabla \dtt u_h^{k+1}}{L^2}^2
	-
	\epsilon^2
	\inpro{\nabla \mu_h^{k+1}-\nabla \mu_h^k}{\nabla \dtt u_h^{k+1}}
	\nonumber\\
	&=
	-\chi_0 \tau \inpro{\dtt n_h^{k+1}}{\Delta_h \dtt u_h^{k+1}}
	+
	r_h^{k+1} \inpro{b_h^k}{\Delta_h \dtt u_h^{k+1}}
	-
	r_h^k \inpro{b_h^{k-1}}{\Delta_h \dtt u_h^{k+1}}
	\nonumber\\
	&=: B_1+B_2+B_3.
\end{align}
Setting $\psi=\dtt \mu_h^{k+1}$ in \eqref{equ:mu k1 min mu k}, we have
\begin{align}\label{equ:mu min mu}
	&\tau \norm{\dtt \mu_h^{k+1}}{L^2}^2
	-
	\epsilon^2 \inpro{\nabla \dtt u_h^{k+1}}{\nabla \mu_h^{k+1} -\nabla\mu_h^k}
	\nonumber\\
	&=
	\lambda \tau \inpro{\dtt u_h^{k+1}}{\dtt \mu_h^{k+1}}
	-
	\chi_0 \inpro{\dtt n_h^{k+1}}{\dtt \mu_h^{k+1}}
	+
	r_h^{k+1} \inpro{b_h^k}{\dtt \mu_h^{k+1}}
	-
	r_h^k \inpro{b_h^{k-1}}{\dtt \mu_h^{k+1}}
	\nonumber\\
	&=:D_1+D_2+D_3+D_4.
\end{align}
Finally, taking $\varphi=2\dtt n_h^{k+1}$ in \eqref{equ:n k1 min n} yields
\begin{align}\label{equ:dtn min dtn}
	&\norm{\dtt n_h^{k+1}}{L^2}^2 - \norm{\dtt n_h^k}{L^2}^2 + \norm{\dtt n_h^{k+1}-\dtt n_h^k}{L^2}^2
	+
	2\delta^{-1} \tau \norm{\nabla\dtt n_h^{k+1}}{L^2}^2
	\nonumber\\
	&=
	-\tau\chi_0 \inpro{\Delta_h \dtt u_h^k}{\dtt n_h^{k+1}}
	- \inpro{\big(p_h^k-p_h^{k-1}\big)\big(\delta^{-1} n_h^{k+1}-\chi_0 u_h^k - \mu_h^{k+1}\big)}{\dtt n_h^{k+1}}
	\nonumber\\
	&\quad
	-
	\tau 
	\inpro{p_h^{k-1} \big(\delta^{-1} \dtt n_h^{k+1}- \chi_0 \dtt u_h^k - \dtt \mu_h^{k+1} \big)}{\dtt n_h^{k+1}}
	\nonumber\\
	&=:E_1+E_2+E_3.
\end{align}
We now add equations \eqref{equ:dt min dt}, \eqref{equ:nab mu min nab mu}, \eqref{equ:mu min mu}, and \eqref{equ:dtn min dtn}, then estimate each term coming from the right-hand side of each equation as follows. Let $\alpha>0$ be a number to be chosen later.
\\[2ex]
\underline{Estimates for $A_1$ and $A_2$}: By the Young and Gagliardo--Nirenberg inequalities, noting \eqref{equ:P lipschitz} as well as the uniform estimates \eqref{equ:stab u H1 n L2} and \eqref{equ:stab n H1}, we have
\begin{align*}
	\abs{A_1}
	&\leq
	C\norm{u_h^k-u_h^{k-1}}{L^2} \left(1+\norm{\mu_h^{k+1}}{L^4}\right) \norm{\dtt u_h^{k+1}}{L^4}
	\\
	&\leq
	C\tau \left(1+\norm{\mu_h^{k+1}}{H^1}^2 \right) \norm{\dtt u_h^k}{L^2}^2
	+
	C\tau \norm{\dtt u_h^{k+1}}{L^2}^2
	+
	\alpha \tau \norm{\nabla \dtt u_h^{k+1}}{L^2}^2.
\end{align*}
For the term $A_2$, by \eqref{equ:P sublin} and \eqref{equ:stab u Linfty}, and Young's inequality, we have
\begin{align*}
	\abs{A_2}
	&\leq
	C\tau \left(1+ \norm{u_h^{k-1}}{L^\infty}\right) \left(\norm{\dtt n_h^{k+1}}{L^2} + \norm{\dtt u_h^k}{L^2} + \norm{\dtt \mu_h^{k+1}}{L^2} \right) \norm{\dtt u_h^{k+1}}{L^2}
	\\
	&\leq
	C\tau \norm{\dtt u_h^k}{L^2}^2 + C\tau \norm{\dtt u_h^{k+1}}{L^2}^2 + \alpha\tau \norm{\dtt n_h^{k+1}}{L^2}^2 + \alpha\tau \norm{\dtt \mu_h^{k+1}}{L^2}^2.
\end{align*}
\\
\underline{Estimates for $B_1$ to $B_3$}: We estimate the term $B_1$ by Young's inequality:
\begin{align*}
	\abs{B_1}
	&\leq
	C \tau \norm{\dtt n_h^{k+1}}{L^2}^2
	+
	\alpha\tau \norm{\Delta_h \dtt u_h^{k+1}}{L^2}^2.
\end{align*}
For $B_2$ and $B_3$, we apply \eqref{equ:ineq 2 rhk f} to obtain
\begin{align*}
	\abs{B_2+B_3}
	&\leq
	C\tau \norm{\dtt u_h^k}{L^2}^2
	+
	C\tau \norm{\dtt u_h^{k+1}}{L^2}^2
	+
	\alpha \tau \norm{\Delta_h \dtt u_h^{k+1}}{L^2}^2.
\end{align*}
\\
\underline{Estimates for $D_1$ to $D_4$}: For the terms $D_1$ and $D_2$, by Young's inequality we have
\begin{align*}
	\abs{D_1}+\abs{D_2}
	&\leq
	C\tau \norm{\dtt u_h^{k+1}}{L^2}^2 + C\tau \norm{\dtt n_h^{k+1}}{L^2}^2 + \alpha \tau \norm{\dtt \mu_h^{k+1}}{L^2}^2.
\end{align*}
For the final two terms, we apply \eqref{equ:ineq 2 rhk f} to obtain
\begin{align*}
	\abs{D_3+D_4}
	&\leq
	C\tau \norm{\dtt u_h^k}{L^2}^2
	+
	C\tau \norm{\dtt u_h^{k+1}}{L^2}^2
	+
	\alpha\tau \norm{{\dtt \mu_h^{k+1}}}{L^2}^2.
\end{align*}
\\
\underline{Estimates for $E_1$ to $E_2$}: For the term $E_1$, by Young's inequality we have
\begin{align*}
	\abs{E_1}
	&\leq
	C\tau \norm{\dtt n_h^{k+1}}{L^2}^2 + \alpha\tau \norm{\Delta_h \dtt u_h^k}{L^2}^2.
\end{align*}
The terms $E_2$ and $E_3$ can be estimated analogously to $A_1$ and $A_2$ in \eqref{equ:dt min dt}, yielding
\begin{align*}
	\abs{E_2}
	&\leq
	C\tau \left(1+\norm{\mu_h^{k+1}}{H^1}^2 \right) \norm{\dtt u_h^k}{L^2}^2
	+
	C\tau \norm{\dtt n_h^{k+1}}{L^2}^2
	+
	\alpha \tau \norm{\nabla \dtt n_h^{k+1}}{L^2}^2,
	\\
	\abs{E_3}
	&\leq
	C\tau \norm{\dtt u_h^k}{L^2}^2 + C\tau \norm{\dtt n_h^{k+1}}{L^2}^2 + \alpha\tau \norm{\dtt \mu_h^{k+1}}{L^2}^2.
\end{align*}
\\
\underline{Conclusion of the proof}: Altogether, after summing the above estimates over $j\in \{0,1,\ldots,k-1\}$, choosing $\alpha>0$ sufficiently small so as to absorb relevant terms to the left-hand side, we deduce
\begin{align*}
	&\norm{\dtt u_h^k}{L^2}^2
	+
	\norm{\dtt n_h^k}{L^2}^2
	+
	\tau \sum_{j=1}^k \left(\norm{\nabla \dtt u_h^j}{L^2}^2 + \norm{\Delta_h \dtt u_h^j}{L^2}^2 + \norm{\dtt \mu_h^j}{L^2}^2 + \norm{\nabla \dtt n_h^j}{L^2}^2 \right) 
	\\
	&\leq 
	C\tau \sum_{j=1}^k \left(1+\norm{\mu_h^j}{H^1}^2 \right) \norm{\dtt u_h^{j-1}}{L^2}^2
	+
	C\tau \sum_{j=1}^k \left(\norm{\dtt u_h^j}{L^2}^2+ \norm{\dtt n_h^j}{L^2}^2 \right).
\end{align*}
Noting \eqref{equ:stab sum mu sigma H1}, we conclude \eqref{equ:stab dtu L2} for sufficiently small $\tau>0$ by the discrete Gronwall lemma. Inequality \eqref{equ:stab uk1 uk L2} then follows, since by definition $u_h^{k+1}-u_h^k= \tau \dtt u_h^{k+1}$ and $n_h^{k+1}-n_h^k= \tau \dtt n_h^{k+1}$.
\end{proof}

To facilitate the error analysis of the numerical scheme, we decompose the approximation error as follows:
\begin{align}
	\label{equ:split uk}
	u_h^k-u^k &= (u_h^k -R_h u^k) + (R_h u^k- u^k) =: \theta^k + \rho^k,
	\\
	\label{equ:split mu}
	\mu_h^k-\mu^k &= (\mu_h^k -R_h \mu^k) + (R_h \mu^k- \mu^k) =: \xi^k + {\eta^k},
	\\
	\label{equ:split n}
	n_h^k-n^k &= (n_h^k -R_h n^k) + (R_h n^k- n^k) =: \zeta^k + \omega^k,
	\\
	\label{equ:split rk}
	r_h^k-r^k &= e^k,
\end{align}
where $R_h$ denotes the Ritz projection defined in \eqref{equ:Ritz}. By construction, we have
\begin{align}\label{equ:Ritz zero}
	\inpro{\nabla \rho^k}{\nabla {\phi}} = \inpro{\nabla \eta^k}{\nabla {\phi}} = 
	\inpro{\nabla \omega^k}{\nabla {\phi}} = 0, \quad \forall {\phi}\in V_h.
\end{align}
We begin with some preliminary estimates. For any $p\in [1,\infty]$, the discrete time derivative satisfies
\begin{align}\label{equ:dt vn Lp}
	\norm{\dtt v^k}{L^p}
	&=
	\norm{\frac{1}{\tau} \int_{t_{k-1}}^{t_k} \partial_t v(t) \,\dt}{L^p}
	\leq
	\norm{v}{W^{1,\infty}_T(L^p)},
\end{align}
while the difference between the discrete and the continuous time derivatives is bounded by
\begin{align}
	\label{equ:dt vn min par v}
	\norm{\dtt v^k- \partial_t v^k}{L^p}
	&=
	\norm{\frac{1}{2\tau} \int_{t_{k-1}}^{t_k} (t-t_{k-1}) \partial_t^2 v(t)\,\dt}{L^p}
	\leq 
	C_p \tau \norm{v}{W^{2,\infty}_T(L^p)}.
\end{align}

To control the difference between expressions involving $\mathcal{E}_1[\,\cdot\,]+B$, we require the following lemma.

\begin{lemma}\label{lem:rf frac est}
{Let $\beta_h^k, b_h^k, \beta^k, b^k$ be defined by \eqref{equ:phk bhk} and \eqref{equ:beta k bk}.} For $k=1,2,\ldots, \lfloor T/\tau \rfloor$, 
\begin{align}
	\label{equ:rhk rk frac}
	\abs{\frac{r_h^k}{\sqrt{\beta_h^{k-1}}} - \frac{r^{k+1}}{\sqrt{\beta^{k+1}}}}
	&\leq
	C\abs{e^k} + C\norm{\theta^{k-1}}{L^2} + Ch^{q+1} + C\tau,
	\\[1ex]
	\label{equ:frac rk f min rk1 f}
	\norm{b^k - b^{k+1}}{H^s}
	&\leq
	C\tau, \qquad s\in \{0,1\},
	\\[1ex]
	\label{equ:frac L2 uhk uk}
	\norm{b_h^k - b^k}{H^s}
	&\leq
	C\norm{\theta^k}{H^s} + Ch^{q+1-s}, \qquad s\in \{0,1\},
\end{align}
where $C$ depends on $T$, but is independent of $k$, $h$, and $\tau$.
\end{lemma}

\begin{proof}
Recall that $\beta_h^k$ and $\beta^k$ are bounded below by a strictly positive constant. First, we prove \eqref{equ:rhk rk frac}. We have by the triangle inequality and \eqref{equ:dt vn Lp},
\begin{align}\label{equ:rhk steps}
	&\abs{\frac{r_h^k}{\sqrt{\beta_h^{k-1}}} - \frac{r^{k+1}}{\sqrt{\beta^{k+1}}}}
	\nonumber\\
	&\leq
	\abs{\frac{r_h^k}{\sqrt{\beta_h^{k-1}}} - \frac{r^k}{\sqrt{\beta_h^{k-1}}}}
	+
	\abs{\frac{r^k}{\sqrt{\beta_h^{k-1}}} - \frac{r^{k+1}}{\sqrt{\beta_h^{k-1}}}}
	+
	\abs{r^{k+1}} \abs{\frac{1}{\sqrt{\beta_h^{k-1}}} - \frac{1}{\sqrt{\beta^{k+1}}}}
	\nonumber\\
	&\leq
	C\abs{e^k} + C\tau 
	+
	C \abs{\mathcal{E}_1[u_h^{k-1}]-\mathcal{E}_1[u^{k+1}]}
	\nonumber\\
	&\leq
	C\abs{e^k} +C\tau+ C\norm{f(u_h^{k-1})-f(u^{k-1})}{L^1}
	+
	C\norm{f(u^{k-1})-f(u^{k+1})}{L^1}
	\nonumber\\
	&\leq
	C\abs{e^k} +C\tau + C \left(1+\norm{u^{k-1}}{L^\infty}^3 + \norm{u_h^{k-1}}{L^\infty}^3 \right) \norm{u^{k-1}-u_h^{k-1}}{L^2}
	+
	C \norm{u^{k-1}-u^{k+1}}{L^2}
	\nonumber\\
	&\leq
	C\abs{e^k} + C\tau+ C\norm{\theta^{k-1}}{L^2} + Ch^{q+1},
\end{align}
where in the penultimate step we used \eqref{equ:f prime xy}, and in the last step we used \eqref{equ:split uk}, \eqref{equ:dt vn Lp}, and \eqref{equ:Ritz ineq}.

Next, we show \eqref{equ:frac rk f min rk1 f}. By splitting the terms on the left-hand side, we have for $s\in\{0,1\}$,
\begin{align}\label{equ:rkfp steps}
	\norm{b^k - b^{k+1}}{H^s}
	&\leq
	\frac{1}{\sqrt{\beta^k}} \norm{f'(u^k)-f'(u^{k+1})}{H^s}
	+
	\abs{\frac{\mathcal{E}_1[u^{k+1}]- \mathcal{E}_1 [u^k]}{\sqrt{\beta^k \beta^{k+1}} \left(\sqrt{\beta^k} +\sqrt{\beta^{k+1}}\right)}}  \norm{f'(u^{k+1})}{H^s}
	\nonumber\\
	&\leq
	C\tau + C\norm{f'(u^k)-f'(u^{k+1})}{H^s}
	+
	C\norm{f(u^{k+1})-f(u^k)}{L^1}.
\end{align}
Now, the last term can be estimated as in \eqref{equ:rhk steps}:
\begin{align*}
	\norm{f(u^{k+1})-f(u^k)}{L^1}
	&\leq
	C\left(1+ \norm{u^k}{L^\infty}^3 + \norm{u^{k+1}}{L^\infty}^3 \right) \norm{u^{k+1}-u^k}{L^2}
	\leq C\tau,
\end{align*}
while the second term on the right-hand side of \eqref{equ:rkfp steps} can be estimated by using \eqref{equ:f prime xy} as follows. If $s=0$, then
\begin{align*}
	\norm{f'(u^k)-f'(u^{k+1})}{L^2}
	&\leq
	C\left(1+ \norm{u^k}{L^\infty}^2 + \norm{u^{k+1}}{L^\infty}^2 \right) \norm{u^{k+1}-u^k}{L^2}
	\leq C\tau.
\end{align*}
If $s=1$, then we have the estimate
\begin{align*}
	&\norm{\nabla \left[f'(u^k)\right] - \nabla\left[f'(u^{k+1})\right]}{L^2}
	\\
	&=
	\norm{f''(u^k)\nabla u^k- f''(u^{k+1}) \nabla u^{k+1}}{L^2}
	\\
	&\leq
	\norm{f''(u^k)}{L^\infty} \norm{\nabla u^k-\nabla u^{k+1}}{L^2}
	+
	\norm{f''(u^k)-f''(u^{k+1})}{L^2} \norm{\nabla u^{k+1}}{L^\infty}
	\\
	&\leq
	C\tau,
\end{align*}
where we used the regularity assumption on $u$ in \eqref{equ:reg u euler}, as well as \eqref{equ:f prime xy}.
Altogether, substituting these back into \eqref{equ:rkfp steps} yields \eqref{equ:frac rk f min rk1 f}.

Next, we show \eqref{equ:frac L2 uhk uk} for $s=0$. The left-hand side of \eqref{equ:frac L2 uhk uk} can be split into
\begin{align}\label{equ:fp steps uk}
	\norm{b_h^k-b^k}{L^2}
	&\leq
	\norm{\frac{f'(u_h^k)-f'(u^k)}{\sqrt{\beta_h^k}}}{L^2}
	+
	\abs{\frac{\mathcal{E}_1[u^k]- \mathcal{E}_1[u_h^k]}{\sqrt{\beta^k \beta_h^k} \Big(\sqrt{\beta^k}+ \sqrt{\beta_h^k}\Big)}} \norm{f'(u^k)}{L^2}
	\nonumber\\
	&\leq
	C\norm{f'(u_h^k)-f'(u^k)}{L^2} + C\norm{f(u^k)-f(u_h^k)}{L^1}
	\nonumber\\
	&\leq
	C\norm{\theta^k}{L^2} + Ch^{q+1},
\end{align}
by applying \eqref{equ:f prime xy} and arguing similarly to \eqref{equ:rkfp steps}.

Finally, to prove \eqref{equ:frac L2 uhk uk} for $s=1$, we apply similar reasoning to that used in \eqref{equ:fp steps uk} and \eqref{equ:rkfp steps}:
\begin{align*}
	\norm{b_h^k-b^k}{H^1}
	&\leq
	C\norm{f'(u_h^k)-f'(u^k)}{H^1} + C\norm{f(u^k)-f(u_h^k)}{L^1}
	\\
	&\leq
	C \norm{f''(u_h^k)}{L^\infty} \norm{\nabla u_h^k-\nabla u^k}{L^2}
	\\
	&\quad
	+
	C \norm{f''(u_h^k)-f''(u^k)}{L^2} \norm{\nabla u^k}{L^\infty}
	+
	C\norm{\theta^k}{L^2} + Ch^{q+1}
	\\
	&\leq
	C\norm{\theta^k}{H^1} + Ch^q,
\end{align*}
as required. This completes the proof of the lemma.
\end{proof}

Recall the notations in \eqref{equ:phk bhk} and \eqref{equ:beta k bk}. Before proving the main theorem, we derive several auxiliary estimates. With this aim, we record the following identity, which will be used repeatedly in the subsequent analysis: subtracting the weak formulation~\eqref{equ:weak tum} at time $t=t_{k+1}$ from the finite element equation~\eqref{equ:fem euler}, and using \eqref{equ:Ritz zero}, we obtain
\begin{subequations}\label{equ:fem sub euler}
	\begin{alignat}{2}
		&\label{equ:fem euler sub u}
		\inpro{\dtt \theta^{k+1}+ \dtt \rho^{k+1} + \dtt u^{k+1}- \partial_t u^{k+1}}{\phi}
		\nonumber\\
		&=
		-\inpro{\nabla \xi^{k+1}}{\nabla \phi}
		+
		\inpro{\big(p_h^k- p^{k+1}\big) (\sigma_h^{k+1}-\mu_h^{k+1})}{\phi}
		\nonumber\\
		&\quad
		+
		\inpro{p^{k+1} \big(\delta^{-1}(\zeta^{k+1}+\omega^{k+1}) - \chi_0(\theta^k+\rho^k+u^k-u^{k+1})\big)}{\phi}
		\nonumber\\
		&\quad
		- \inpro{p^{k+1} (\xi^{k+1}+\eta^{k+1})}{\phi}
		, \quad &&\forall \phi \in V_h,
		\\[2ex]
		&\inpro{\xi^{k+1}+\eta^{k+1}}{\psi}
		\nonumber\\
		&=
		\epsilon^2 \inpro{\nabla \theta^{k+1}}{\nabla \psi}
		+
		\lambda \inpro{\theta^{k+1}+ \rho^{k+1}}{\psi}
		-
		\chi_0 \inpro{\zeta^{k+1}+\omega^{k+1}}{\psi}
		\nonumber\\
		&\quad
		+
		r_h^{k+1} \inpro{b_h^k-b^k}{\psi}
		+ 
		e^{k+1} \inpro{b^k}{\psi}
		+
		r^{k+1} \inpro{b^k-b^{k+1}}{\psi},
		\label{equ:fem euler sub mu}
		\quad &&\forall \psi\in V_h,
		\\[2ex]
		\label{equ:fem euler sub n}
		&\inpro{\dtt \zeta^{k+1}+ \dtt \omega^{k+1}+ \dtt n^{k+1}-\partial_t n^{k+1}}{\varphi}
		\nonumber\\
		&=
		-\delta^{-1} \inpro{\nabla \zeta^{k+1}}{\nabla \varphi}
		+\chi_0 \inpro{\nabla \theta^k}{\nabla \varphi}
		+\chi_0 \inpro{\nabla u^k-\nabla u^{k+1}}{\nabla \varphi}
		\nonumber\\
		&\quad
		-
		\inpro{\big(p_h^k- p^{k+1} \big) (\sigma_h^{k+1}-\mu_h^{k+1})}{\varphi}
		\nonumber\\
		&\quad
		-
		\inpro{p^{k+1} \big(\delta^{-1}(\zeta^{k+1}+\omega^{k+1}) - \chi_0(\theta^k+\rho^k+u^k-u^{k+1})\big)}{\varphi}
		\nonumber\\
		&\quad
		+ \inpro{p^{k+1} (\xi^{k+1}+\eta^{k+1})\big)}{\varphi}
		, \quad &&\forall \varphi \in V_h,
	\end{alignat}
\end{subequations}
as well as
\begin{align}
	\label{equ:fem euler sub r}
	&\dtt e^{k+1} + \dtt r^{k+1} - \partial_t r^{k+1}
	\nonumber\\
	&=
	\frac{1}{2\sqrt{\beta_h^k}}
	\inpro{f'(u_h^k)-f'(u^k)}{\dtt u_h^{k+1}}
	+
	\frac{1}{2\sqrt{\beta_h^k}} \inpro{f'(u^k)}{\dtt\theta^{k+1} + \dtt \rho^{k+1}}
	\nonumber\\
	&\quad
	+
	\frac{1}{2\sqrt{\beta_h^k}}
	\inpro{f'(u^k)}{\dtt u^{k+1}-\partial_t u^{k+1}}
	+
	\frac12 \left(\frac{1}{\sqrt{\beta_h^k}} - \frac{1}{\sqrt{\beta^k}}\right) \inpro{f'(u^k)}{\partial_t u^{k+1}}
	\nonumber\\
	&\quad
	+
	\inpro{b^k-b^{k+1}}{\partial_t u^{k+1}}.
\end{align}
\\
In order to obtain an optimal order of convergence, we need another lemma to bound $\norm{\dtt \theta^{k+1}}{\widetilde{H}^{-s}}$ for $s\in \{1,2\}$.

\begin{lemma}
For $k=0,1,\ldots, \lfloor T/\tau \rfloor$,
	\begin{align}
		\label{equ:dt theta H-1}
		\norm{\dtt \theta^{k+1}}{\widetilde{H}^{-1}}
		&\leq
		C\norm{\theta^k}{H^1}+ C\norm{\zeta^{k+1}}{L^2} + C\norm{\nabla \xi^{k+1}}{L^2} + Ch^{q+1} + C\tau,
		\\
		\label{equ:dt theta H-2}
		\norm{\dtt \theta^{k+1}}{\widetilde{H}^{-2}}
		&\leq
		C\norm{\theta^k}{H^1}+ C\norm{\zeta^{k+1}}{L^2} + C\norm{\xi^{k+1}}{L^2} + Ch^{q+1} + C\tau,
	\end{align}
	where $C$ depends on $T$, but is independent of $k$, $h$, and $\tau$.
\end{lemma}

\begin{proof}
Note that from \eqref{equ:fem euler sub u}, we have
\begin{align}\label{equ:dt theta k pi}
	\dtt \theta^{k+1}
	&=
	\Delta_h \xi^{k+1}
	-
	\Pi_h \left[ \dtt \rho^{k+1}+ \dtt u^{k+1}- \partial_t u^{k+1} \right] 
	+
	\Pi_h\left[ \big(p_h^k-p^{k+1}\big) (\sigma_h^{k+1}-\mu_h^{k+1}) \right]
	\nonumber\\
	&\quad
	+
	\Pi_h \left[ p^{k+1} \big(\delta^{-1}(\zeta^{k+1}+\omega^{k+1}) - \chi_0(\theta^k+\rho^k+u^k-u^{k+1})\big) \right]
	\nonumber\\
	&\quad
	- \Pi_h \left[ p^{k+1} (\xi^{k+1}+\eta^{k+1}) \right].
\end{align}
Taking the $\widetilde{H}^{-1}$ norm on both sides of \eqref{equ:dt theta k pi}, using \eqref{equ:disc lap H-1} and the boundedness of $\Pi_h$ in the $\widetilde{H}^{-1}$ norm, we obtain
\begin{align}\label{equ:steps dtt th H-1}
	\norm{\dtt \theta^{k+1}}{\widetilde{H}^{-1}}
	&\leq
	C \norm{\nabla \xi^{k+1}}{L^2}
	+
	C \norm{\dtt \rho^{k+1}+ \dtt u^{k+1}-\partial_t u^{k+1}}{\widetilde{H}^{-1}}
	\nonumber\\
	&\quad
	+
	C \norm{\big(p_h^k-p^{k+1}\big) (\sigma_h^{k+1}-\mu_h^{k+1})}{\widetilde{H}^{-1}}
	\nonumber\\
	&\quad
	+
	C \norm{ p^{k+1} \big(\delta^{-1}(\zeta^{k+1}+\omega^{k+1}) - \chi_0(\theta^k+\rho^k+u^k-u^{k+1})\big)}{\widetilde{H}^{-1}}
	\nonumber\\
	&\quad
	+
	C \norm{p^{k+1} (\xi^{k+1}+\eta^{k+1})}{\widetilde{H}^{-1}}
	\nonumber\\
	&=: C \norm{\nabla \xi^{k+1}}{L^2}
	+
	J_1+J_2+J_3+J_4.
\end{align}
It remains to estimate each term on the last line. For the term $J_1$, by the inclusion $L^2\hookrightarrow \widetilde{H}^{-1}$, \eqref{equ:dt vn min par v}, and \eqref{equ:Ritz ineq}, we infer
\begin{align*}
	\abs{J_1}
	&\leq
	Ch^{q+1} + C\tau.
\end{align*}
For the term $J_2$, by the embeddings $L^{6/5} \hookrightarrow \widetilde{H}^{-1}$ and $H^1\hookrightarrow L^3$, and assumption \eqref{equ:P lipschitz}, we have
\begin{align*}
	\abs{J_2}
	&\leq
	C\norm{\theta^k+ \rho^k+ u^k-u^{k+1}}{L^3} \norm{\sigma_h^{k+1}- \mu_h^{k+1}}{L^2}
	\\
	&\leq
	C \norm{\theta^k}{H^1} + Ch^{q+1} + C\tau,
\end{align*}
where in the last step we also used inequalities \eqref{equ:Ritz ineq}, \eqref{equ:stab n H1}, and \eqref{equ:stab Delta u L2}, noting \eqref{equ:dt vn Lp}. Similarly, for the terms $J_3$ and $J_4$, by the embedding $L^2 \hookrightarrow \widetilde{H}^{-1}$ and a standard argument, we have
\begin{align*}
	\abs{J_3}
	&\leq
	C \norm{\zeta^{k+1}}{L^2}
	+
	C\norm{\theta^k}{L^2}
	+
	Ch^{q+1} + C\tau,
	\\
	\abs{J_4}
	&\leq
	C\norm{\xi^{k+1}}{L^2}
	+
	Ch^{q+1}.
\end{align*}
Altogether, substituting these estimates into \eqref{equ:steps dtt th H-1}, we obtain \eqref{equ:dt theta H-1}.

Finally, note that by using \eqref{equ:disc lap H-2} and the inclusion $\widetilde{H}^{-1}\hookrightarrow \widetilde{H}^{-2}$, instead of \eqref{equ:steps dtt th H-1} we have
\begin{align*}
	\norm{\dtt \theta^{k+1}}{\widetilde{H}^{-2}}
	&\leq
	C \norm{\xi^{k+1}}{L^2}
	+
	J_1+J_2+J_3+J_4.
\end{align*}
The previous estimates for $J_1$ to $J_4$ hold, thus we obtain \eqref{equ:dt theta H-2}.
\end{proof}

In the following proposition, we establish a key auxiliary estimate that holds for sufficiently small $h$ and $\tau$.

\begin{proposition}\label{pro:ind}
Let $k \in \{1,2,\ldots,\lfloor T/\tau \rfloor\}$. For sufficiently small $h,\tau>0$, we have
	\begin{align}\label{equ:error theta n eul}
		&\norm{\theta^k}{L^2}^2 + \norm{\zeta^k}{L^2}^2 +
		\abs{e^k}^2
		+
		\tau \sum_{j=1}^k \left(\norm{\nabla \theta^j}{L^2}^2
		+
		\norm{\Delta_h \theta^j}{L^2}^2
		+
		\norm{\xi^j}{L^2}^2
		+
		\norm{\nabla \zeta^j}{L^2}^2 \right)
		\nonumber\\
		&\quad \leq C(h^{2(q+1)}+ \tau^2).
	\end{align}
    Consequently, we have the following estimate. 
    {Let $(u_h^k,\mu_h^k,n_h^k,\sigma_h^k,r_h^k)$ be the solution of the fully discrete scheme \eqref{equ:fem euler}, and let $(u,\mu,n,\sigma,r)$ be a solution of \eqref{equ:tum sav} satisfying the regularity assumption \eqref{equ:reg u euler}}. For sufficiently small $h,\tau >0$ and $k\in \{1,2,\ldots,\lfloor T/\tau \rfloor\}$, we have
\begin{align}\label{equ:error euler L2}
	\norm{u_h^k-u(t_k)}{L^2}
	+
	\norm{n_h^k-n(t_k)}{L^2}
	+
	\abs{r_h^k-r(t_k)}
	&\leq
	C(h^{q+1}+\tau).
\end{align}
Here, the constant $C$ depends on $T$, but is independent of $k$, $h$, and $\tau$.
\end{proposition}

\begin{proof}
We will take $\phi,\psi,\varphi$ in \eqref{equ:fem euler sub u}, \eqref{equ:fem euler sub mu}, and \eqref{equ:fem euler sub n} to be suitable functions in $V_h$. First, taking $\phi=(\epsilon^2 +1) \theta^{k+1}$ and using the identity \eqref{equ:ab ide}, we have
\begin{align}\label{equ:12 tau theta u L2}
	&\frac{\epsilon^2 +1}{2\tau} \left(\norm{\theta^{k+1}}{L^2}^2 - \norm{\theta^k}{L^2}^2 \right)
	+
	\frac{\epsilon^2 +1}{2\tau} \norm{\theta^{k+1}-\theta^k}{L^2}^2
	+
	(\epsilon^2 +1) \inpro{\nabla \xi^{k+1}}{\nabla \theta^{k+1}}
	\nonumber\\
	&=
	-
	(\epsilon^2 +1) \inpro{\dtt \rho^{k+1}+\dtt u^{k+1}-\partial_t u^{k+1}}{\theta^{k+1}}
	\nonumber\\
	&\quad
	+
	(\epsilon^2 +1) \inpro{\big(p_h^k-p^{k+1}\big) (\sigma_h^{k+1}-\mu_h^{k+1})}{\theta^{k+1}}
	\nonumber\\
	&\quad
	+
	(\epsilon^2 +1) \inpro{p^{k+1} \big(\delta^{-1}(\zeta^{k+1}+\omega^{k+1}) - \chi_0(\theta^k+\rho^k+u^k-u^{k+1})\big)}{\theta^{k+1}}
	\nonumber\\
	&\quad
	-
	(\epsilon^2 +1) \inpro{p^{k+1} (\xi^{k+1}+\eta^{k+1})\big)}{\theta^{k+1}}
	\nonumber\\
	&=:F_1+F_2+F_3+F_4.
\end{align}
Next, taking $\psi=\xi^{k+1}+ \Delta_h \theta^{k+1}$, we have
\begin{align}\label{equ:xi mu L2 eq}
	&\norm{\xi^{k+1}}{L^2}^2
	+
	\epsilon^2 \norm{\Delta_h \theta^{k+1}}{L^2}^2
	+
	\lambda \norm{\nabla \theta^{k+1}}{L^2}^2
	-
	(\epsilon^2+1) \inpro{\nabla \theta^{k+1}}{\nabla \xi^{k+1}}
	\nonumber\\
	&=
	-
	\inpro{\eta^{k+1}}{\xi^{k+1}}
	+
	\lambda \inpro{\theta^{k+1}}{\xi^{k+1}}
	+
	\lambda \inpro{\rho^{k+1}}{\xi^{k+1}+\Delta_h \theta^{k+1}}
		-
	\inpro{\eta^{k+1}}{\Delta_h \theta^{k+1}}
	\nonumber\\
	&\quad
	-
	\chi_0 \inpro{\zeta^{k+1}+\omega^{k+1}}{\xi^{k+1}+ \Delta_h \theta^{k+1}}
		+
	r_h^{k+1} \inpro{b_h^k-b^k}{\xi^{k+1}+ \Delta_h \theta^{k+1}}
	\nonumber\\
	&\quad
	+ 
	e^{k+1} \inpro{b^k}{\xi^{k+1}+ \Delta_h \theta^{k+1}}
	+
	r^{k+1} \inpro{b^k-b^{k+1}}{\xi^{k+1}+ \Delta_h \theta^{k+1}}
	\nonumber\\
	&=: 
	G_1+G_2+\ldots+G_8.
\end{align}
Setting $\varphi=\zeta^{k+1}$, we obtain
\begin{align}\label{equ:12 tau theta n}
	&\frac{1}{2\tau} \left(\norm{\zeta^{k+1}}{L^2}^2 - \norm{\zeta^k}{L^2}^2 \right)
	+
	\frac{1}{2\tau} \norm{\zeta^{k+1}-\zeta^k}{L^2}^2
	+
	\delta^{-1} \norm{\nabla \zeta^{k+1}}{L^2}^2
	\nonumber\\
	&=
	-\inpro{\dtt \omega^{k+1}+ \dtt n^{k+1}-\partial_t n^{k+1}}{\zeta^{k+1}}
	+\chi_0 \inpro{\nabla \theta^k}{\nabla \zeta^{k+1}}
	\nonumber\\
	&\quad
	+\chi_0 \inpro{\nabla u^k-\nabla u^{k+1}}{\nabla \zeta^{k+1}}
	-
	\inpro{\big(p_h^k- p^{k+1}\big) (\sigma_h^{k+1}-\mu_h^{k+1})}{\zeta^{k+1}}
	\nonumber\\
	&\quad
	-
	\inpro{p^{k+1} \big(\delta^{-1}(\zeta^{k+1}+\omega^{k+1}) - \chi_0(\theta^k+\rho^k+u^k-u^{k+1})\big)}{\zeta^{k+1}}
	\nonumber\\
	&\quad
	+ \inpro{p^{k+1} (\xi^{k+1}+\eta^{k+1})\big)}{\zeta^{k+1}}
	\nonumber\\
	&=: H_1+H_2+\ldots+H_6.
\end{align}
Finally, multiplying \eqref{equ:fem euler sub r} by $e^{k+1}$, we obtain
\begin{align}\label{equ:sub e 1}
	&\frac{1}{2\tau} \left(\abs{e^{k+1}}^2 - \abs{e^k}^2 \right)
	+
	\frac{1}{2\tau} \abs{e^{k+1}-e^k}^2
	\nonumber\\
	&=
	-
	(\dtt r^{k+1}-\partial_t r^{k+1}) e^{k+1}
	+
	\frac{e^{k+1}}{2\sqrt{\beta_h^k}}
	\inpro{f'(u_h^k)-f'(u^k)}{\dtt u_h^{k+1}}
	\nonumber\\
	&\quad
	+
	\frac{e^{k+1}}{2\sqrt{\beta_h^k}} \inpro{f'(u^k)}{\dtt\theta^{k+1} + \dtt \rho^{k+1}}
	+
	\frac{e^{k+1}}{2\sqrt{\beta_h^k}}
	\inpro{f'(u^k)}{\dtt u^{k+1}-\partial_t u^{k+1}}
	\nonumber\\
	&\quad
	+
	\frac12  e^{k+1} \left(\frac{1}{\sqrt{\beta_h^k}} - \frac{1}{\sqrt{\beta^k}}\right) \inpro{f'(u^k)}{\partial_t u^{k+1}}
	+
	e^{k+1} \inpro{b^k-b^{k+1}}{\partial_t u^{k+1}}
	\nonumber\\
	&=:I_1+I_2+\ldots+I_6.
\end{align}
Now, we add \eqref{equ:12 tau theta u L2}, \eqref{equ:xi mu L2 eq}, \eqref{equ:12 tau theta n}, and \eqref{equ:sub e 1}. We then estimate the terms coming from the right-hand side of each equation as follows. Let $\alpha>0$ be a number to be chosen later.
\\[2ex]
\underline{Estimates for $F_1$ to $F_4$}: Firstly, by Young's inequality, \eqref{equ:Ritz ineq}, and \eqref{equ:dt vn min par v}, we have
\begin{align*}
	\abs{F_1} &\leq
	C \norm{\dtt \rho^{k+1}}{L^2}^2 + C\norm{\dtt u^{k+1}-\partial_t u^{k+1}}{L^2}^2 + \alpha \norm{\theta^{k+1}}{L^2}^2
	\\
	&\leq
	Ch^{2(q+1)}+C\tau^2 + \alpha \norm{\theta^{k+1}}{L^2}^2.
\end{align*}
 For the term $F_2$, by \eqref{equ:P lipschitz}, \eqref{equ:split uk}, the Sobolev embedding, and Young's inequality, noting the estimate \eqref{equ:stab n H1}, we have
\begin{align*}
	\abs{F_2} &\leq
	C \norm{\theta^k+\rho^k+u^k-u^{k+1}}{L^2} \norm{\sigma_h^{k+1}+\mu_h^{k+1}}{L^4} \norm{\theta^{k+1}}{L^4}
	\\
	&\leq
	C\left(1+\norm{\mu_h^{k+1}}{H^1}^2\right) \left(h^{2(q+1)}+ \tau^2+ \norm{\theta^k}{L^2}^2 \right)
	+
	\alpha \norm{\theta^{k+1}}{H^1}^2.
\end{align*}
For $F_3$, noting \eqref{equ:P sublin}, we have
\begin{align*}
	\abs{F_3}
	&\leq
	C\norm{p^{k+1}}{L^\infty} \left(\norm{\zeta^{k+1}+\omega^{k+1}}{L^2} + \norm{\theta^k+\rho^k+u^k-u^{k+1}}{L^2} \right) \norm{\theta^{k+1}}{L^2}
	\\
	&\leq
	C \norm{\zeta^{k+1}}{L^2}^2 + C \norm{\theta^k}{L^2}^2 + Ch^{2(q+1)} + C\tau^2 + \alpha \norm{\theta^{k+1}}{L^2}^2.
\end{align*}
Similarly, for the term $F_4$ we infer
\begin{align*}
	\abs{F_4}
	&\leq
	C\norm{\theta^{k+1}}{L^2}^2 + Ch^{2(q+1)} + \alpha \norm{\xi^{k+1}}{L^2}^2.
\end{align*}
Letting $F := \sum_{i=1}^4 F_i$, we deduce from the above estimates that for any $\alpha>0$,
\begin{align}\label{equ:est F}
	\abs{F}
	&\leq
	C\left(1+\norm{\mu_h^{k+1}}{H^1}^2\right) \left(h^{2(q+1)}+ \tau^2+ \norm{\theta^k}{L^2}^2 \right)
	\nonumber\\
	&\quad
	+
	C \norm{\theta^{k+1}}{L^2}^2
	+
	C \norm{\zeta^{k+1}}{L^2}^2
	+
	\alpha \norm{\nabla \theta^{k+1}}{L^2}^2.
\end{align}
\\
\underline{Estimates for $G_1$ to $G_8$}: We estimate the terms $G_1$ to $G_5$ in a straightforward manner by Young's inequality and \eqref{equ:Ritz ineq}, leading to
\begin{align*}
	\abs{G_1}
	&\leq
	{C\norm{\eta^{k+1}}{L^2}^2 + \alpha \norm{\xi^{k+1}}{L^2}^2}
	\leq
	{Ch^{2(q+1)}} + \alpha \norm{\xi^{k+1}}{L^2}^2,
	\\
	\abs{G_2}
	&\leq
	C\norm{\theta^{k+1}}{L^2}^2 + \alpha \norm{\xi^{k+1}}{L^2}^2,
	\\
	\abs{G_3}
	&\leq
	{C\norm{\rho^{k+1}}{L^2}^2 + \alpha \norm{\xi^{k+1}}{L^2}^2 + \alpha \norm{\Delta_h \theta^{k+1}}{L^2}^2}
	\leq
	{Ch^{2(q+1)}} + \alpha \norm{\xi^{k+1}}{L^2}^2 + \alpha \norm{\Delta_h \theta^{k+1}}{L^2}^2,
	\\
	\abs{G_4}
	&\leq
	{C \norm{\eta^{k+1}}{L^2}^2 + \alpha \norm{\Delta_h \theta^{k+1}}{L^2}^2}
	\leq
	{Ch^{2(q+1)}} + \alpha \norm{\Delta_h \theta^{k+1}}{L^2}^2,
	\\
	\abs{G_5}
	&\leq
	{C\norm{\zeta^{k+1}}{L^2}^2 + C\norm{\omega^{k+1}}{L^2}^2 + \alpha \norm{\xi^{k+1}}{L^2}^2 +  \alpha \norm{\Delta_h \theta^{k+1}}{L^2}^2}
	\\
	&\leq
	C\norm{\zeta^{k+1}}{L^2}^2 + {Ch^{2(q+1)}} + \alpha \norm{\xi^{k+1}}{L^2}^2 +  \alpha \norm{\Delta_h \theta^{k+1}}{L^2}^2.
\end{align*}
The term $G_7$ is estimated using Young's inequality as
\begin{align*}
	\abs{G_7}
	&\leq
	C\abs{e^{k+1}}^2 + \alpha \norm{\xi^{k+1}}{L^2}^2
	+
	\alpha \norm{\Delta_h \theta^{k+1}}{L^2}^2.
\end{align*}
To estimate the terms $G_6$ and $G_8$, we apply Lemma~\ref{lem:rf frac est} and Young's inequality to obtain
\begin{align*}
	\abs{G_6}
	&\leq
	C \norm{b_h^k-b^k}{L^2}^2
	+
	\alpha \norm{\xi^{k+1}}{L^2}^2
	+
	\alpha \norm{\Delta_h \theta^{k+1}}{L^2}^2
	\\
	&\leq
	C\norm{\theta^k}{L^2}^2 + Ch^{2(q+1)}
	+
	\alpha \norm{\xi^{k+1}}{L^2}^2
	+
	\alpha \norm{\Delta_h \theta^{k+1}}{L^2}^2,
	\\
	\abs{G_8}
	&\leq
	C \norm{b^k-b^{k+1}}{L^2}^2
	+
	\alpha \norm{\xi^{k+1}}{L^2}^2
	+
	\alpha \norm{\Delta_h \theta^{k+1}}{L^2}^2
	\\
	&\leq
	C\tau^2 
	+ \alpha \norm{\xi^{k+1}}{L^2}^2
	+
	\alpha \norm{\Delta_h \theta^{k+1}}{L^2}^2.
\end{align*}
Altogether, denoting $G:= \sum_{i=1}^8 G_i$, we deduce that for any $\alpha>0$,
\begin{align}\label{equ:est G}
	\abs{G} &\leq
	C\abs{e^k}^2 + C\norm{\theta^k}{L^2}^2 +  C\norm{\theta^{k+1}}{L^2}^2 + C\norm{\zeta^{k+1}}{L^2}^2 + Ch^{2(q+1)} + C\tau^2
	\nonumber\\
	&\quad
	+ \alpha \norm{\xi^{k+1}}{L^2}^2
	+
	\alpha \norm{\Delta_h \theta^{k+1}}{L^2}^2.
\end{align}
\\
\underline{Estimates for $H_1$ to $H_6$}: The term $H_1$ is estimated using Young's inequality, \eqref{equ:dt vn Lp}, and \eqref{equ:dt vn min par v} as
\begin{align*}
	\abs{H_1}
	&\leq
	Ch^{2(q+1)} + C\tau^2 + C\norm{\zeta^{k+1}}{L^2}^2.
\end{align*}
For $H_2$, we apply \eqref{equ:nab disc lap}, H\"older's, and Young's inequalities to obtain for any $\alpha>0$,
\begin{align*}
	\abs{H_2}
	\leq
	C \norm{\nabla\theta^k}{L^2} \norm{\nabla \zeta^{k+1}}{L^2}
	&\leq
	C \norm{\theta^k}{L^2}^{\frac12} \norm{\Delta_h \theta^k}{L^2}^{\frac12} \norm{\nabla \zeta^{k+1}}{L^2}
	\\
	&\leq
	C\norm{\theta^k}{L^2}^2 + \alpha \norm{\Delta_h \theta^k}{L^2}^2 + \alpha \norm{\nabla \zeta^{k+1}}{L^2}^2.
\end{align*}
By Young's inequality and \eqref{equ:dt vn Lp}, for the term $H_3$ we have
\begin{align*}
	\abs{H_3}
	&\leq
	C\tau^2 +\alpha \norm{\nabla \zeta^{k+1}}{L^2}^2.
\end{align*}
The terms $H_4$, $H_5$, and $H_6$ can be estimated in a similar manner as the terms $F_3$, $F_4$, and $F_5$, respectively. Therefore, we obtain
\begin{align*}
	\abs{H_4}
	&\leq
	C\left(1+\norm{\mu_h^{k+1}}{H^1}^2\right) \left(h^{2(q+1)}+ \tau^2+ \norm{\theta^k}{L^2}^2 \right)
	+
	\alpha \norm{\zeta^{k+1}}{H^1}^2,
	\\
	\abs{H_5}
	&\leq
	C \norm{\zeta^{k+1}}{L^2}^2 + C \norm{\theta^k}{L^2}^2 + Ch^{2(q+1)} + C\tau^2,
	\\
	\abs{H_6}
	&\leq
	C\norm{\zeta^{k+1}}{L^2}^2 + Ch^{2(q+1)} + \alpha \norm{\xi^{k+1}}{L^2}^2.
\end{align*}
To summarise, letting $H:= \sum_{i=1}^6 H_i$, we deduce
\begin{align}\label{equ:est H}
	\abs{H} &\leq
	C\left(1+\norm{\mu_h^{k+1}}{H^1}^2\right) \left(h^{2(q+1)}+ \tau^2+ \norm{\theta^k}{L^2}^2 \right)
	+
	C \norm{\theta^{k+1}}{L^2}^2
	+
	C \norm{\zeta^{k+1}}{L^2}^2
	\nonumber\\
	&\quad
	+
	\alpha \norm{\xi^{k+1}}{L^2}^2
	+
	\alpha \norm{\nabla \zeta^{k+1}}{L^2}^2
	+
	\alpha \norm{\Delta_h \theta^k}{L^2}^2.
\end{align}
\\
\underline{Estimates for $I_1$ to $I_6$}: By Young's inequality and \eqref{equ:dt vn min par v}, the term $I_1$ can be estimated as
\begin{align*}
	\abs{I_1} &\leq
	C\tau^2 + C\abs{e^{k+1}}^2.
\end{align*}
For the term $I_2$, by H\"older's and Young's inequality, noting \eqref{equ:f prime xy} and \eqref{equ:stab u Linfty}, we have
\begin{align*}
	\abs{I_2}
	&\leq
	C\abs{e^{k+1}} \norm{f'(u_h^k)-f'(u^k)}{L^2} \norm{\dtt u_h^{k+1}}{L^2}
	\\
	&\leq
	C\abs{e^{k+1}} \left(1+\norm{u_h^k}{L^\infty}^2 + \norm{u^k}{L^\infty}^2\right) \norm{\theta^k+\rho^k}{L^2} \norm{\dtt u_h^{k+1}}{L^2} 
	\\
	&\leq
	C\norm{\dtt u_h^{k+1}}{L^2}^2 \left(h^{2(q+1)} + \norm{\theta^k}{L^2}^2\right) + C \abs{e^{k+1}}^2.
\end{align*}
Next, for the term $I_3$, we apply \eqref{equ:dt theta H-2} and H\"older's inequality to obtain
\begin{align*}
	\abs{I_3}
	&\leq
	C \abs{e^{k+1}} \norm{f'(u^k)}{H^2} \norm{\dtt \theta^{k+1}+ \dtt \rho^{k+1}}{\widetilde{H}^{-2}}
	\\
	&\leq
	C \abs{e^{k+1}} \left(\norm{\theta^k}{H^1}+ \norm{\zeta^{k+1}}{L^2} + \norm{\xi^{k+1}}{L^2} + h^{q+1} + \tau\right)
	\\
	&\leq
	C\abs{e^{k+1}}^2 + C\norm{\theta^k}{L^2}^2 + \alpha \norm{\Delta_h \theta^k}{L^2}^2
	+
	\alpha \norm{\zeta^{k+1}}{L^2}^2
	+
	\alpha \norm{\xi^{k+1}}{L^2}^2
	+
	Ch^{2(q+1)} + C\tau^2,
\end{align*}
where in the last step we also used \eqref{equ:nab disc lap} and Young's inequality. For the term $I_4$, by \eqref{equ:dt vn min par v} and Young's inequality, we infer
\begin{align*}
	\abs{I_4}
	&\leq
	C\abs{e^{k+1}}^2 + C\tau^2.
\end{align*}
For the term $I_5$, by H\"older's and Young's inequality, noting the definition of $\beta_h^k$ and $\beta^k$, we have
\begin{align*}
	\abs{I_5}
	&\leq
	\frac12 \abs{e^{k+1}} \abs{\frac{1}{\sqrt{\beta_h^k}}- \frac{1}{\sqrt{\beta^k}}} \norm{f'(u^k)}{L^2} \norm{\partial_t u^{k+1}}{L^2}
	\\
	&\leq
	C\abs{e^{k+1}} \norm{f(u^k)-f(u_h^k)}{L^1} \norm{f'(u^k)}{L^2} \norm{\partial_t u^{k+1}}{L^2}
	\\
	&\leq
	C \abs{e^{k+1}}^2 + C\norm{\theta^k}{L^2}^2 + Ch^{2(q+1)}.
\end{align*}
Finally, for $I_6$, we apply \eqref{equ:frac rk f min rk1 f} and Young's inequality to obtain
\begin{align*}
	\abs{I_6}
	&\leq
	C \abs{e^{k+1}}^2+ C\tau^2.
\end{align*}
Altogether, letting $I:= \sum_{i=1}^6 I_i$, we have
\begin{align}\label{equ:est I}
	\abs{I} 
	&\leq
	C\abs{e^k}^2 + C\abs{e^{k+1}}^2 
	+
	C\left(1+\norm{\dtt u_h^{k+1}}{L^2}^2 \right) \left(h^{2(q+1)}+ \norm{\theta^k}{L^2}^2\right) 
	+
	C\tau^2
	\nonumber\\
	&\quad
	+
	\alpha \norm{\Delta_h \theta^k}{L^2}^2
	+
	\alpha \norm{\zeta^{k+1}}{L^2}^2
	+
	\alpha \norm{\xi^{k+1}}{L^2}^2.
\end{align}
\\
\underline{{Derivation of \eqref{equ:error theta n eul}}}: Now, putting together the estimates derived in \eqref{equ:est F}, \eqref{equ:est G}, \eqref{equ:est H}, and \eqref{equ:est I}, choosing $\alpha>0$ sufficiently small to absorb relevant terms, we obtain
\begin{align*}
	&\frac{1}{\tau} \left(\norm{\theta^{k+1}}{L^2}^2 - \norm{\theta^k}{L^2}^2 \right)
	+
	\frac{1}{\tau} \left(\norm{\zeta^{k+1}}{L^2}^2 - \norm{\zeta^k}{L^2}^2 \right)
	+
	\frac{1}{\tau} \left(\abs{e^{k+1}}^2 - \abs{e^k}^2 \right) 
	\\
	&\quad
	+
	\norm{\nabla \theta^{k+1}}{L^2}^2
	+
	\norm{\Delta_h \theta^{k+1}}{L^2}^2
	+
	\norm{\xi^{k+1}}{L^2}^2
	+
	\norm{\nabla \zeta^{k+1}}{L^2}^2
	\\
	&\leq
	C\left(1+\norm{\mu_h^{k+1}}{H^1}^2 +\norm{\dtt u_h^{k+1}}{L^2}^2 \right) \left(h^{2(q+1)}+ \tau^2+ \norm{\theta^k}{L^2}^2 \right)
	+
	C \norm{\theta^{k+1}}{L^2}^2
	\nonumber\\
	&\quad
	+ 
	C\abs{e^k}^2 + C\abs{e^{k+1}}^2 + C\norm{\zeta^{k+1}}{L^2}^2 + \alpha \norm{\Delta_h \theta^k}{L^2}^2.
\end{align*}
Summing the above inequality over $j\in \{0,1,\ldots,k-1\}$ and noting $\alpha>0$ can be chosen sufficiently small, we obtain
\begin{align*}
	&\norm{\theta^k}{L^2}^2
	+
	\norm{\zeta^k}{L^2}^2
	+
	\abs{e^k}^2
	+
	\tau \sum_{j=1}^k \left(\norm{\nabla \theta^j}{L^2}^2
	+
	\norm{\Delta_h \theta^j}{L^2}^2
	+
	\norm{\xi^j}{L^2}^2
	+
	\norm{\nabla \zeta^j}{L^2}^2 \right)
	\\
	&\leq
	C\tau \sum_{j=1}^k \left(1+\norm{\mu_h^j}{H^1}^2 +\norm{\dtt u_h^j}{L^2}^2 \right) \left(h^{2(q+1)}+ \tau^2+ \norm{\theta^{j-1}}{L^2}^2 \right)
	+
	C\tau \sum_{j=1}^k \abs{e^{j-1}}^2
	\\
	&\quad
	+
	C_0 \tau \sum_{j=1}^k \left(\norm{\theta^j}{L^2}^2 + \norm{\zeta^j}{L^2}^2 + \abs{e^j}^2 \right),
\end{align*}
where $C_0$ is a constant independent of $k$, $h$, and $\tau$, explicitly highlighted here.
By the discrete Gronwall lemma, noting the estimates \eqref{equ:stab sum mu sigma H1} and \eqref{equ:stab Delta u L2}, we deduce that for $\tau<1/C_0$,
\begin{align*}
	&\norm{\theta^k}{L^2}^2
	+
	\norm{\zeta^k}{L^2}^2
	+
	\abs{e^k}^2
	+
	\tau \sum_{j=1}^k \left(\norm{\nabla \theta^j}{L^2}^2
	+
	\norm{\Delta_h \theta^j}{L^2}^2
	+
	\norm{\xi^j}{L^2}^2
	+
	\norm{\nabla \zeta^j}{L^2}^2 \right)
	\nonumber\\
	&\leq
	C \exp\left[ \tau \sum_{j=1}^k \left(1+\norm{\mu_h^j}{H^1}^2 +\norm{\dtt u_h^j}{L^2}^2 \right) \right] (h^{2(q+1)}+\tau^2),
\end{align*}
which implies \eqref{equ:error theta n eul}, {thanks to \eqref{equ:stab Delta u L2} and \eqref{equ:stab u Linfty}}.

Finally, the error estimate \eqref{equ:error euler L2} follows at once by noting the splitting \eqref{equ:split uk}, \eqref{equ:split n}, \eqref{equ:split rk}, together with the auxiliary error estimate \eqref{equ:error theta n eul} established above and \eqref{equ:Ritz ineq}.
\end{proof}

To derive an estimate for $\norm{\xi^k}{L^2}$, we require the following lemma.

\begin{lemma}
	Let $k\in\{1,2,\ldots,\lfloor T/\tau\rfloor\}$. Let $\beta_h^k$, $b_h^k$, $\beta^k$, and $b^k$ be defined by \eqref{equ:phk bhk} and \eqref{equ:beta k bk}. For any $\alpha>0$, there exists a constant $C_\alpha>0$, independent of $h$, $\tau$, and $k$, such that for all $\psi\in V_h$,
	\begin{align}
		\label{equ:pre est N5}
		&\abs{
			\inpro{
				\frac{1}{\tau}
				\left(
				\frac{r_h^{k+1}}{\sqrt{\beta_h^k}}
				-
				\frac{r_h^k}{\sqrt{\beta_h^{k-1}}}
				\right)
				f'(u_h^k)
			}{\psi}}
		\nonumber\\
		&\qquad
		\leq
		C_\alpha\norm{\psi}{L^2}^2
		+
		\alpha h^{2(q+1)}
		+
		\alpha \tau^2
		+
		\alpha\left(
		\norm{\dtt\theta^{k+1}}{L^2}^2
		+
		\norm{\dtt\theta^k}{L^2}^2
		\right),
	\end{align}
	and
	\begin{align}
		\label{equ:pre est N6}
		&\abs{
			\inpro{
				\left(\frac{r_h^k}{\sqrt{\beta_h^{k-1}}}\right)
				\left(
				\frac{f'(u_h^k)-f'(u_h^{k-1})}{\tau}
				\right)
				-
				\left(
				\frac{r^{k+1}}{\sqrt{\beta^{k+1}}}
				\right)
				f''(u^{k+1})\partial_t u^{k+1}
			}{\psi}}
		\nonumber\\
		&\qquad
		\leq
		C_\alpha \norm{\psi}{L^2}^2
		+
		\alpha h^{2(q+1)}
		+
		\alpha \tau^2
		+
		\alpha\norm{\dtt\theta^k}{L^2}^2.
	\end{align}
\end{lemma}

\begin{proof}
	We first prove \eqref{equ:pre est N5}. By \eqref{equ:fem euler r}, we have
	\begin{align}
		\frac{1}{\tau}
		\left(
		\frac{r_h^{k+1}}{\sqrt{\beta_h^k}}
		-
		\frac{r_h^k}{\sqrt{\beta_h^{k-1}}}
		\right)
		&=
		\frac{\dtt r_h^{k+1}}{\sqrt{\beta_h^k}}
		+
		\frac{1}{\tau}
		\left(
		\frac{1}{\sqrt{\beta_h^k}}
		-
		\frac{1}{\sqrt{\beta_h^{k-1}}}
		\right) r_h^k
		\nonumber\\
		&\quad
		=
		\frac{1}{2\beta_h^k}
		\inpro{f'(u_h^k)}{\dtt u_h^{k+1}}
		-
		\frac{r_h^k}{\sqrt{\beta_h^k\beta_h^{k-1}}
			(\sqrt{\beta_h^k}+\sqrt{\beta_h^{k-1}})}
		\dtt\beta_h^k .
		\label{equ:ratio-diff-rh}
	\end{align}
	Using the integral form of the mean-value theorem, we write
	\[
	\dtt\beta_h^k
	=
	\inpro{\overline f_h^k}{\dtt u_h^k},
	\qquad \text{where}\quad
	\overline f_h^k
	:=
	\int_0^1
	f'\big(u_h^{k-1}+s(u_h^k-u_h^{k-1})\big)\,\ds .
	\]
	Hence \eqref{equ:ratio-diff-rh} becomes
	\begin{align}
		&\frac{1}{\tau}
		\left(
		\frac{r_h^{k+1}}{\sqrt{\beta_h^k}}
		-
		\frac{r_h^k}{\sqrt{\beta_h^{k-1}}}
		\right)
		=
		\frac{1}{2\beta_h^k}
		\inpro{f'(u_h^k)}{\dtt u_h^{k+1}}
		-
		\frac{r_h^k}{\sqrt{\beta_h^k\beta_h^{k-1}}
			(\sqrt{\beta_h^k}+\sqrt{\beta_h^{k-1}})}
		\inpro{\overline f_h^k}{\dtt u_h^k}.
		\label{equ:ratio-diff-rh-2}
	\end{align}
	We now estimate the two terms on the right-hand side of
	\eqref{equ:ratio-diff-rh-2} separately. Adding and subtracting a common exact
	quantity
	\[
	\frac{1}{2\beta^k}
	\inpro{f'(u^k)}{\partial_t u^k},
	\]
	we obtain by the triangle inequality,
	\begin{align}
		&\abs{
			\frac{1}{\tau}
			\left(
			\frac{r_h^{k+1}}{\sqrt{\beta_h^k}}
			-
			\frac{r_h^k}{\sqrt{\beta_h^{k-1}}}
			\right)}
		\nonumber\\
		&\quad
		\leq
		\abs{
			\frac{1}{2\beta_h^k}
			\inpro{f'(u_h^k)}{\dtt u_h^{k+1}}
			-
			\frac{1}{2\beta^k}
			\inpro{f'(u^k)}{\partial_t u^k}
		}
		\nonumber\\
		&\qquad
		+
		\abs{
			\frac{r_h^k}{\sqrt{\beta_h^k\beta_h^{k-1}}
				(\sqrt{\beta_h^k}+\sqrt{\beta_h^{k-1}})}
			\inpro{\overline f_h^k}{\dtt u_h^k}
			-
			\frac{1}{2\beta^k}
			\inpro{f'(u^k)}{\partial_t u^k}
		}
		\nonumber\\
		&\quad
		=: J_1+J_2 .
		\label{equ:J1-J2-ratio}
	\end{align}
	For the first term, we have by adding and subtracting common terms,
	\begin{align}
		J_1
		&\leq
		\abs{
			\frac{1}{2\beta_h^k}
			-
			\frac{1}{2\beta^k}
		}
		\abs{\inpro{f'(u_h^k)}{\dtt u_h^{k+1}}}
		\nonumber\\
		&\quad
		+
		C\abs{
			\inpro{f'(u_h^k)-f'(u^k)}{\dtt u_h^{k+1}}
		}
		+
		C\abs{
			\inpro{f'(u^k)}{\dtt u_h^{k+1}-\partial_t u^k}
		}.
		\label{equ:J1-est-decomp}
	\end{align}
	Now, observe that we have
	\[
	\abs{\beta_h^k-\beta^k} + \norm{f'(u_h^k)-f'(u^k)}{L^2}
	\leq
	C\norm{u_h^k-u^k}{L^2}
	\leq
	C(h^{q+1}+\tau),
	\]
	as well as \eqref{equ:stab u Linfty}, \eqref{equ:stab dtu L2}, \eqref{equ:dt vn Lp}, and \eqref{equ:dt vn min par v}. Thus, we deduce from \eqref{equ:J1-est-decomp} that
	\begin{align}
		J_1
		&\leq
		C(h^{q+1}+\tau)
		+
		C\norm{\dtt u_h^{k+1}-\partial_t u^k}{L^2}
		\nonumber\\
		&\leq
		C(h^{q+1}+\tau)
		+
		C\norm{\dtt\theta^{k+1}+
			\dtt\rho^{k+1}
			+
			\dtt u^{k+1}-\partial_t u^k}{L^2}
		\nonumber\\
		&\leq
		C(h^{q+1}+\tau)
		+
		C\norm{\dtt\theta^{k+1}}{L^2}.
		\label{equ:J1-est}
	\end{align}
	We next estimate $J_2$. First, we note the following fact: the map
	\[
	G(r,a,b):=\frac{r}{\sqrt{ab}(\sqrt a+\sqrt b)}
	\]
	is Lipschitz on the range where $a,b$ are bounded away from zero and $r$ is bounded. We also observe that
	$G(r^k,\beta^k,\beta^k)=1/(2\beta^k)$. Therefore,
	\begin{align}
		&\abs{
			\frac{r_h^k}{\sqrt{\beta_h^k\beta_h^{k-1}}
				(\sqrt{\beta_h^k}+\sqrt{\beta_h^{k-1}})}
			-
			\frac{1}{2\beta^k}
		}
		\nonumber\\
		&=
		\abs{G(r_h^k,\beta_h^k,\beta_h^{k-1})- G(r^k,\beta^k,\beta^k)}
		\nonumber\\
		&
		\leq
		C\abs{r_h^k-r^k}
		+
		C\abs{\beta_h^k-\beta^k}
		+
		C\abs{\beta_h^{k-1}-\beta^{k-1}}
		+
		C\abs{\beta^k-\beta^{k-1}}
		\nonumber\\
		&
		\leq
		C(h^{q+1}+\tau)
		\label{equ:coef-J2}
	\end{align}
	by \eqref{equ:error euler L2}.
	Moreover, by the definition of $\overline f_h^k$ and the mean-value theorem,
	\begin{align}
		\norm{\overline f_h^k-f'(u^k)}{L^2}
		&\leq
		C\norm{u_h^k-u^k}{L^2}
		+
		C\norm{u_h^{k-1}-u^k}{L^2}
		\leq
		C(h^{q+1}+\tau).
		\label{equ:fbar-est}
	\end{align}
	Thus, we have the estimate
	\begin{align}
		J_2
		&\leq
		C(h^{q+1}+\tau)
		\abs{\inpro{\overline f_h^k}{\dtt u_h^k}}
		+
		C\abs{\inpro{\overline f_h^k-f'(u^k)}{\dtt u_h^k}}
		+
		C\abs{\inpro{f'(u^k)}{\dtt u_h^k-\partial_t u^k}}
		\nonumber\\
		&\leq
		C(h^{q+1}+\tau)
		+
		C\norm{\dtt u_h^k-\partial_t u^k}{L^2}
		\nonumber\\
		&\leq
		C(h^{q+1}+\tau)
		+
		C\norm{\dtt\theta^k}{L^2}.
		\label{equ:J2-est}
	\end{align}
	Combining \eqref{equ:J1-J2-ratio}, \eqref{equ:J1-est}, and
	\eqref{equ:J2-est}, we obtain
	\begin{align}
		\label{equ:ratio-pre-est}
		\abs{
			\frac{1}{\tau}
			\left(
			\frac{r_h^{k+1}}{\sqrt{\beta_h^k}}
			-
			\frac{r_h^k}{\sqrt{\beta_h^{k-1}}}
			\right)}
		\leq
		C(h^{q+1}+\tau)
		+
		C\norm{\dtt\theta^{k+1}}{L^2}
		+
		C\norm{\dtt\theta^k}{L^2}.
	\end{align}
	By \eqref{equ:stab u Linfty}, we also have
	$\norm{f'(u_h^k)}{L^\infty}\le C$. Inequality \eqref{equ:pre est N5} then follows by an application of Young's inequality.

	We now prove \eqref{equ:pre est N6}. Again by the integral form of the mean-value theorem,
	\[
	\frac{f'(u_h^k)-f'(u_h^{k-1})}{\tau}
	=
	\overline f_{h,2}^k\, \dtt u_h^k, \qquad \text{where} \quad 
	\overline f_{h,2}^k
	:=
	\int_0^1
	f''\big(u_h^{k-1}+s(u_h^k-u_h^{k-1})\big)\,\ds .
	\]
	We first record the following estimate.
	Adding and subtracting
	$\overline f_{h,2}^k\partial_tu^{k+1}$, we have
	\begin{align*}
		&\norm{
			\frac{f'(u_h^k)-f'(u_h^{k-1})}{\tau}
			-
			f''(u^{k+1})\partial_tu^{k+1}
		}{L^2}
		\nonumber\\
		&=
		\norm{
			\overline f_{h,2}^k \dtt u_h^k
			-
			f''(u^{k+1})\partial_tu^{k+1}
		}{L^2}
		\nonumber\\
		&
		\leq
		\norm{\overline f_{h,2}^k}{L^\infty}
		\norm{\dtt u_h^k-\partial_tu^{k+1}}{L^2}
		+
		\norm{\overline f_{h,2}^k-f''(u^{k+1})}{L^2}
		\norm{\partial_tu^{k+1}}{L^\infty}
		\nonumber\\
		&=: L_1+L_2.
	\end{align*}
	By the uniform $L^\infty$ bound for $u_h^j$ in \eqref{equ:stab u Linfty}, the growth assumption on $f''$, \eqref{equ:dt vn Lp}, and \eqref{equ:dt vn min par v}, we have
	\begin{align}
		L_1
		&\leq
		C\left(\norm{\dtt\theta^k}{L^2}
		+
		\norm{\dtt\rho^k}{L^2}
		+
		\norm{\dtt u^k-\partial_tu^{k+1}}{L^2}\right)
		\leq
		C\norm{\dtt\theta^k}{L^2}
		+
		C(h^{q+1}+\tau).
		\label{equ:N6-dtuh-est}
	\end{align}
	Moreover, by the mean-value theorem, the assumed regularity on $u$, and \eqref{equ:error euler L2}, we have
	\begin{align}
		L_2
		&\leq
		C\norm{u_h^k-u^{k+1}}{L^2}
		+
		C\norm{u_h^{k-1}-u^{k+1}}{L^2}
		\leq
		C(h^{q+1}+\tau).
		\label{equ:N6-fbar2-est}
	\end{align}
	Combining \eqref{equ:N6-dtuh-est}--\eqref{equ:N6-fbar2-est}, we obtain
	\begin{align}
		\label{equ:diff-fprime-est}
		&\norm{
			\frac{f'(u_h^k)-f'(u_h^{k-1})}{\tau}
			-
			f''(u^{k+1})\partial_tu^{k+1}
		}{L^2}
		\leq
		C(h^{q+1}+\tau)
		+
		C\norm{\dtt\theta^k}{L^2}.
	\end{align}
	
	Next, since $r^{k+1}/\sqrt{\beta^{k+1}}=1$ by definition, we have the estimate
	\begin{align}
		\abs{
			\frac{r_h^k}{\sqrt{\beta_h^{k-1}}}
			-
			\frac{r^{k+1}}{\sqrt{\beta^{k+1}}}
		}
		&
		=
		\abs{
			\frac{r_h^k}{\sqrt{\beta_h^{k-1}}}
			-
			\frac{r^k}{\sqrt{\beta^k}}
		}
		\nonumber\\
		&\quad
		\leq
		C\abs{r_h^k-r^k}
		+
		C\abs{\beta_h^{k-1}-\beta^k}
		\nonumber\\
		&\quad
		\leq
		C\abs{r_h^k-r^k}
		+
		C\abs{\beta_h^{k-1}-\beta^{k-1}}
		+
		C\abs{\beta^{k-1}-\beta^k}
		\nonumber\\
		&\quad
		\leq
		C(h^{q+1}+\tau).
		\label{equ:N6-ratio-est}
	\end{align}
	Therefore, by adding and subtracting a common term
	\[
	\frac{r^{k+1}}{\sqrt{\beta^{k+1}}}
	\frac{f'(u_h^k)-f'(u_h^{k-1})}{\tau},
	\]
	we obtain
	\begin{align}
		&\norm{
			\left(\frac{r_h^k}{\sqrt{\beta_h^{k-1}}}\right)
			\left(
			\frac{f'(u_h^k)-f'(u_h^{k-1})}{\tau}
			\right)
			-
			\left(
			\frac{r^{k+1}}{\sqrt{\beta^{k+1}}}
			\right)
			f''(u^{k+1})\partial_tu^{k+1}
		}{L^2}
		\nonumber\\
		&\quad
		\leq
		\abs{
			\frac{r_h^k}{\sqrt{\beta_h^{k-1}}}
			-
			\frac{r^{k+1}}{\sqrt{\beta^{k+1}}}
		}
		\norm{
			\frac{f'(u_h^k)-f'(u_h^{k-1})}{\tau}
		}{L^2}
		\nonumber\\
		&\qquad
		+
		C
		\norm{
			\frac{f'(u_h^k)-f'(u_h^{k-1})}{\tau}
			-
			f''(u^{k+1})\partial_tu^{k+1}
		}{L^2}.
		\label{equ:N6-product-split}
	\end{align}
	Now, we note that by \eqref{equ:stab dtu L2}, the growth assumption on $f''$, and \eqref{equ:stab u Linfty},
	\[
	\norm{
		\frac{f'(u_h^k)-f'(u_h^{k-1})}{\tau}
	}{L^2}
	=
	\norm{\overline f_{h,2}^k\, \dtt u_h^k}{L^2}
	\leq 
	C,
	\]
	Therefore, using \eqref{equ:diff-fprime-est} and \eqref{equ:N6-ratio-est}, we infer from \eqref{equ:N6-product-split} that
	\begin{align*}
		&\norm{
			\left(\frac{r_h^k}{\sqrt{\beta_h^{k-1}}}\right)
			\left(
			\frac{f'(u_h^k)-f'(u_h^{k-1})}{\tau}
			\right)
			-
			\left(
			\frac{r^{k+1}}{\sqrt{\beta^{k+1}}}
			\right)
			f''(u^{k+1})\partial_tu^{k+1}
		}{L^2}
		\leq
		C(h^{q+1}+\tau)
		+
		C\norm{\dtt\theta^k}{L^2}.
	\end{align*}
	We then deduce \eqref{equ:pre est N6} by an application of Young's inequality. The proof is now complete.
\end{proof}

The following proposition shows an auxiliary superconvergence estimate for $\norm{\zeta^k}{H^1}$.

\begin{proposition}
For sufficiently small $h,\tau>0$, we have
\begin{align}\label{equ:error xi k L2}
    \norm{\xi^k}{L^2}^2
    +
    \norm{\nabla \zeta^k}{L^2}^2
    +
    \tau \sum_{j=1}^k \left(\norm{\dtt \theta^j}{L^2}^2 + \norm{\dtt \zeta^j}{L^2}^2 \right)
		\leq C(h^{2(q+1)}+ \tau^2).
\end{align}
In particular, for $d\leq 2$, we have
\begin{align}\label{equ:error zeta Linfty 2d}
    \norm{\zeta^k}{L^\infty}^2 \leq C(h^{2(q+1)}+ \tau^2) \abs{\ln h},
\end{align}
where $C$ depends on $T$, but is independent of $k$, $h$, and $\tau$.
\end{proposition}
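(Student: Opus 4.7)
The plan is to derive a coupled discrete energy identity by testing three suitably shifted versions of the error equations. First, subtract \eqref{equ:fem euler sub mu} at step $k$ from the corresponding equation at step $k+1$ and divide by $\tau$, producing an identity involving $\dtt\xi^{k+1}$ and $\nabla\dtt\theta^{k+1}$. Test this identity with $\psi=\xi^{k+1}$: the identity \eqref{equ:ab ide} yields the telescoping quantity $\frac{1}{2\tau}(\|\xi^{k+1}\|_{L^2}^2-\|\xi^k\|_{L^2}^2)$, but a cross term $\epsilon^2\inpro{\nabla\dtt\theta^{k+1}}{\nabla\xi^{k+1}}$ also appears. To cancel this cross term, test \eqref{equ:fem euler sub u} at step $k+1$ with $\phi=\epsilon^2\dtt\theta^{k+1}$, producing the coercive quantity $\epsilon^2\|\dtt\theta^{k+1}\|_{L^2}^2$ together with an opposite copy of the cross term. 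Finally, test \eqref{equ:fem euler sub n} at step $k+1$ with $\varphi=\dtt\zeta^{k+1}$, which contributes the telescoping $\frac{1}{2\tau\delta}(\|\nabla\zeta^{k+1}\|_{L^2}^2-\|\nabla\zeta^k\|_{L^2}^2)$ as well as the coercive $\|\dtt\zeta^{k+1}\|_{L^2}^2$ on the LHS. Adding the three tested equations gives an identity whose left-hand side contains precisely the four quantities appearing in \eqref{equ:error xi k L2}.

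\textbf{Right-hand-side estimates and discrete Gronwall.} The most delicate task is bounding the time-differenced SAV source terms arising from the first tested identity. They decompose into three pieces: (i) a contribution driven by $\dtt[r_h^k/(\mathcal{E}_1[u_h^k]+B)]\cdot f'(u_h^k)$, controlled by the auxiliary estimate \eqref{equ:pre est N5} and yielding $C\tau^2+\alpha\|\xi^{k+1}\|_{L^2}^2$; (ii) a contribution of the form $(r_h^{k-1}/(\mathcal{E}_1[u_h^{k-1}]+B))\dtt[f'(u_h^k)]$ which, after being compared with the continuous analogue $r^{k+1}f''(u^{k+1})\partial_t u^{k+1}/(\mathcal{E}_1[u^{k+1}]+B)$ via \eqref{equ:pre est N6}, contributes at most $Ch^{2(q+1)}+C\tau^2+C\|\xi^{k+1}\|_{L^2}^2+\alpha\|\dtt\theta^{k+1}\|_{L^2}^2$; and (iii) a residual coming from the time-differences of $e^k$ and of $r^kf'(u^k)/(\mathcal{E}_1[u^k]+B)$, handled by Lemma~\ref{lem:rf frac est}, the consistency bounds \eqref{equ:dt vn Lp}--\eqref{equ:dt vn min par v}, and the $r_h$-equation \eqref{equ:fem euler sub r}. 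All remaining right-hand-side terms (Ritz-projection residuals, proliferation couplings, and the diffusive contributions from the $u$- and $n$-equations) are dispatched by routine Young-type splittings, the Ritz bounds \eqref{equ:Ritz ineq}--\eqref{equ:Ritz approx neg}, the $L^\infty$ estimate \eqref{equ:stab u Linfty}, the Lipschitz property \eqref{equ:P lipschitz}, and the previously established bounds in \eqref{equ:error theta n eul}. Choosing $\alpha>0$ small enough to absorb all coercive critical terms, summing over $j\in\{0,\ldots,k-1\}$, and invoking the uniform bounds $\tau\sum\|\mu_h^j\|_{H^1}^2+\tau\sum\|\dtt u_h^j\|_{L^2}^2\leq C$ from \eqref{equ:stab sum mu sigma H1} and \eqref{equ:stab dtu L2}, an application of the discrete Gronwall lemma yields \eqref{equ:error xi k L2}.

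\textbf{$L^\infty$ bound and principal obstacle.} Inequality \eqref{equ:error zeta Linfty 2d} then follows by combining the $L^2$ bound $\|\zeta^k\|_{L^2}^2\leq C(h^{2(q+1)}+\tau^2)$ from \eqref{equ:error theta n eul} with the new bound $\|\nabla\zeta^k\|_{L^2}^2\leq C(h^{2(q+1)}+\tau^2)$, and applying the two-dimensional discrete logarithmic Sobolev inequality $\|v_h\|_{L^\infty}\leq C|\ln h|^{1/2}\|v_h\|_{H^1}$, valid on quasi-uniform meshes for $d\leq 2$, to $v_h=\zeta^k$. The principal obstacle throughout is the careful treatment of $\dtt$ applied to the SAV-modified nonlinearity $r_h^k f'(u_h^k)/(\mathcal{E}_1[u_h^k]+B)$: this is a product of three time-varying factors whose discrete time derivative must be reconciled with the continuous quantity $r f''(u)\partial_t u/(\mathcal{E}_1[u]+B)$ without losing an order in $\tau$. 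The auxiliary estimates \eqref{equ:pre est N5}--\eqref{equ:pre est N6} are engineered exactly to provide this reconciliation while producing small factors that can be absorbed on the LHS, and the meticulous bookkeeping required to verify that every residual is either absorbable or amenable to discrete Gronwall is the most intricate part of the argument.
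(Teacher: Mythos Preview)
Your proposal is correct and follows essentially the same approach as the paper: the identical three test functions ($\phi=\epsilon^2\dtt\theta^{k+1}$, $\psi=\xi^{k+1}$, $\varphi=\dtt\zeta^{k+1}$), the same cross-term cancellation, and the same reliance on \eqref{equ:pre est N5}--\eqref{equ:pre est N6} to control the time-differenced SAV nonlinearity. Two small presentational differences worth noting: the paper derives the $\dtt\xi$ identity by subtracting $\partial_t$ of the continuous equation \eqref{equ:weak tum mu} from $\dtt$ of the discrete one (rather than differencing the error equation \eqref{equ:fem euler sub mu}), which produces exactly the two pieces handled by \eqref{equ:pre est N5}--\eqref{equ:pre est N6} without any residual piece (iii); and in the final step the paper bypasses Gronwall by directly invoking the already-established bound $\tau\sum_j\|\xi^j\|_{L^2}^2\leq C(h^{2(q+1)}+\tau^2)$ from \eqref{equ:error theta n eul}.
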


\begin{proof}
First, we divide \eqref{equ:mu k1 min mu k} by $\tau$ and subtract the result from the time derivative of \eqref{equ:weak tum mu} at time $t=t_{k+1}$ to obtain
\begin{align}\label{equ:dtt xi kk}
    &\inpro{\dtt \xi^{k+1}+ \dtt \eta^{k+1}+ \dtt \mu^{k+1} - \partial_t \mu^{k+1}}{\psi}
    \nonumber\\
    &=
    \epsilon^2 \inpro{\nabla \dtt \theta^{k+1}+ \nabla \dtt u^{k+1}- \nabla \partial_t u^{k+1}}{\nabla \psi} 
    +
    \lambda \inpro{\dtt \theta^{k+1} + \dtt \rho^{k+1} + \dtt u^{k+1}- \partial_t u^{k+1}}{\psi} 
    \nonumber\\
    &\quad
    -
    \chi_0 \inpro{\dtt \zeta^{k+1}+ \dtt \omega^{k+1} + \dtt n^{k+1}- \partial_t n^{k+1}}{\psi}
    \nonumber\\
    &\quad
    +
    \inpro{\frac{1}{\tau} \left(\frac{r_h^{k+1}}{\sqrt{\beta_h^k}} - \frac{r_h^k}{\sqrt{\beta_h^{k-1}}} \right) f'(u_h^k)}{\psi}
    \nonumber\\
    &\quad
    +
    \inpro{\left(\frac{r_h^k}{\sqrt{\beta_h^{k-1}}}\right) \left(\frac{f'(u_h^k)-f'(u_h^{k-1})}{\tau}\right) - \left(\frac{r^{k+1}}{\sqrt{\beta^{k+1}}} \right) f''(u^{k+1}) \cdot \partial_t u^{k+1}}{\psi}.
\end{align}
Next, we take $\phi=\epsilon^2 \dtt \theta^{k+1}$ in \eqref{equ:fem euler sub u} and $\psi=\xi^{k+1}$ in \eqref{equ:dtt xi kk} to obtain
\begin{align}\label{equ:eps dt theta 2}
    &\epsilon^2 \norm{\dtt \theta^{k+1}}{L^2}^2
    +
    \epsilon^2 \inpro{\nabla \xi^{k+1}}{\nabla \dtt \theta^{k+1}}
    \nonumber\\
    &=
    -\epsilon^2 \inpro{\dtt \rho^{k+1}+\dtt u^{k+1}-\partial_t u^{k+1}}{\dtt \theta^{k+1}}
    +
    \epsilon^2
	\inpro{\big(p_h^k-P(u^{k+1})\big)\cdot (\sigma_h^{k+1}-\mu_h^{k+1})}{\dtt \theta^{k+1}}
	\nonumber\\
	&\quad
	+
	\epsilon^2 \inpro{P(u^{k+1})\cdot \big(\delta^{-1}(\zeta^{k+1}+\omega^{k+1}) - \chi_0(\theta^k+\rho^k+u^k-u^{k+1})\big)}{\dtt \theta^{k+1}}
	\nonumber\\
	&\quad
	- \epsilon^2
    \inpro{P(u^{k+1})\cdot (\xi^{k+1}+ \eta^{k+1})}{\dtt \theta^{k+1}}
    \nonumber\\
    &=: M_1+M_2+M_3+M_4,
\end{align}
and
\begin{align}\label{equ:xi k min xi 2}
    &\frac{1}{2\tau} \left(\norm{\xi^{k+1}}{L^2}^2 - \norm{\xi^k}{L^2}^2 \right)
    +
    \frac{1}{2\tau} \norm{\xi^{k+1}-\xi^k}{L^2}^2
    -
    \epsilon^2 \inpro{\nabla\dtt 
    \theta^{k+1}}{\nabla\xi^{k+1}}
    \nonumber\\
    &=
    -\inpro{\dtt \eta^{k+1}+\dtt \mu^{k+1}- \partial_t \mu^{k+1}}{\xi^{k+1}}
    -
    \epsilon^2 \inpro{\Delta \dtt u^{k+1}-\Delta \partial_t u^{k+1}}{\xi^{k+1}}
    \nonumber\\
    &\quad
    +
    \lambda \inpro{\dtt\theta^{k+1}+ \dtt \rho^{k+1}+\dtt u^{k+1}- \partial_t u^{k+1}}{\xi^{k+1}}
    \nonumber\\
    &\quad
    -
    \chi_0 \inpro{\dtt \zeta^{k+1}+\dtt \omega^{k+1}+ \dtt n^{k+1}- \partial_t n^{k+1}}{\xi^{k+1}}
    \nonumber\\
    &\quad
    +
    \inpro{\frac{1}{\tau} \left(\frac{r_h^{k+1}}{\sqrt{\beta_h^k}} - \frac{r_h^k}{\sqrt{\beta_h^{k-1}}} \right) f'(u_h^k)}{\xi^{k+1}}
    \nonumber\\
    &\quad
    +
    \inpro{\left(\frac{r_h^k}{\sqrt{\beta_h^{k-1}}}\right) \left(\frac{f'(u_h^k)-f'(u_h^{k-1})}{\tau}\right) - \left(\frac{r^{k+1}}{\sqrt{\beta^{k+1}}} \right) f''(u^{k+1}) \cdot \partial_t u^{k+1}}{\xi^{k+1}}
    \nonumber\\
    &=: N_1+N_2+\ldots+ N_6.
\end{align}
Lastly, setting $\varphi=\dtt \zeta^{k+1}$ in \eqref{equ:fem euler sub n}, we obtain
\begin{align}\label{equ:dt zeta min zeta}
    &\norm{\dtt \zeta^{k+1}}{L^2}^2
    +
    \frac{1}{2\delta \tau} \left(\norm{\nabla \zeta^{k+1}}{L^2}^2- \norm{\nabla\zeta^k}{L^2}^2 \right) + 
    \frac{1}{2\delta \tau} \norm{\nabla \zeta^{k+1}-\nabla \zeta^k}{L^2}^2
    \nonumber\\
    &=
    -\inpro{\dtt \omega^{k+1}+\dtt n^{k+1}-\partial_t n^{k+1}}{\dtt \zeta^{k+1}}
    -
    \chi_0 \inpro{\Delta_h \theta^k}{\dtt \zeta^{k+1}}
    -
    \chi_0 \inpro{\Delta u^k-\Delta u^{k+1}}{\dtt \zeta^{k+1}}
    \nonumber\\
    &\quad
    -
    \inpro{\big(p_h^k- p^{k+1}\big) (\sigma_h^{k+1}-\mu_h^{k+1})}{\dtt \zeta^{k+1}}
    \nonumber\\
    &\quad
    -
    \inpro{p^{k+1} \big(\delta^{-1}(\zeta^{k+1}+\omega^{k+1}) - \chi_0(\theta^k+\rho^k+u^k-u^{k+1})\big)}{\dtt \zeta^{k+1}}
    \nonumber\\
    &\quad
	+ \inpro{p^{k+1} (\xi^{k+1}+\eta^{k+1})\big)}{\dtt \zeta^{k+1}}
    \nonumber\\
    &=
    O_1+O_2+\ldots+ O_6.
\end{align}
We now add \eqref{equ:eps dt theta 2}, \eqref{equ:xi k min xi 2}, and \eqref{equ:dt zeta min zeta}, then estimate each term on the right-hand side of these equations.
\\[2ex]
\underline{Estimates for $M_1$ to $M_4$}: By Young's inequality and applying similar argument as in the estimates for $F_1$ in \eqref{equ:12 tau theta u L2}, we have
\begin{align*}
	\abs{M_1} &\leq
	Ch^{2(q+1)}+C\tau^2 + \alpha \norm{\dtt \theta^{k+1}}{L^2}^2.
\end{align*}
For the term $M_2$ we apply Young's inequality, \eqref{equ:disc lapl L infty}, \eqref{equ:Ritz ineq}, as well as the uniform estimate \eqref{equ:stab n H1} to obtain
\begin{align*}
    \abs{M_2} 
    &\leq
    C\norm{\theta^k}{L^\infty} \norm{\sigma_h^{k+1}+\mu_h^{k+1}}{L^2} \norm{\dtt \theta^{k+1}}{L^2}
    \\
    &\quad
    +
    C\norm{\rho^k+u^k-u^{k+1}}{L^4}
    \norm{\sigma_h^{k+1}+\mu_h^{k+1}}{L^4} \norm{\dtt \theta^{k+1}}{L^2}
    \\
    &\leq
    C\norm{\theta^k}{H^1}^2 + \alpha \norm{\Delta_h \theta^k}{L^2}^2
    +
    \alpha \norm{\dtt \theta^{k+1}}{L^2}^2
    +
    C\left(1+\norm{\mu_h^{k+1}}{H^1}^2\right) (h^{2(q+1)}+ \tau^2).
\end{align*}
For the term $M_3$, noting \eqref{equ:P sublin}, by the same argument as in the estimate for $F_3$ in \eqref{equ:12 tau theta u L2}, we have
\begin{align*}
	\abs{M_3}
	&\leq
	C\norm{P(u^{k+1})}{L^\infty} \left(\norm{\zeta^{k+1}+\omega^{k+1}}{L^2} + \norm{\theta^k+\rho^k+u^k-u^{k+1}}{L^2} \right) \norm{\dtt \theta^{k+1}}{L^2}
	\\
	&\leq
	C \norm{\zeta^{k+1}}{L^2}^2 + C \norm{\theta^k}{L^2}^2 + Ch^{2(q+1)} + C\tau^2 + \alpha \norm{\dtt \theta^{k+1}}{L^2}^2.
\end{align*}
Similarly, for the term $M_4$ we infer
\begin{align*}
	\abs{M_4}
	&\leq
	C\norm{\xi^{k+1}}{L^2}^2 + Ch^{2(q+1)} + \alpha \norm{\dtt \theta^{k+1}}{L^2}^2.
\end{align*}
\\
\underline{Estimates for $N_1$ to $N_6$}: For the terms $N_1$ to $N_4$, we apply Young's inequality and follow a standard argument to obtain
\begin{align*}
    \abs{N_1}
    &\leq
    Ch^{2(q+1)}+ C\tau^2 + C\norm{\xi^{k+1}}{L^2}^2,
    \\
    \abs{N_2}
    &\leq
    C\tau^2 + C\norm{\xi^{k+1}}{L^2}^2,
    \\
    \abs{N_3}
    &\leq
    Ch^{2(q+1)}+ C\tau^2 + C\norm{\xi^{k+1}}{L^2}^2 + \alpha \norm{\dtt \theta^{k+1}}{L^2}^2,
    \\
    \abs{N_4}
    &\leq
    Ch^{2(q+1)} + C\tau^2 + C \norm{\xi^{k+1}}{L^2}^2 + \alpha \norm{\dtt \zeta^{k+1}}{L^2}^2.
\end{align*}
The terms $N_5$ and $N_6$ can be bounded by using \eqref{equ:pre est N5} and \eqref{equ:pre est N6}, respectively, giving
\begin{align*}
    \abs{N_5}
    &\leq
    Ch^{2(q+1)} + C\tau^2 + C \norm{\xi^{k+1}}{L^2}^2 + \alpha \norm{\dtt \theta^k}{L^2}^2+ \alpha \norm{\dtt \theta^{k+1}}{L^2}^2,
    \\
    \abs{N_6}
    &\leq
    Ch^{2(q+1)}+ C\tau^2 + C\norm{\xi^{k+1}}{L^2}^2 + \alpha \norm{\dtt \theta^k}{L^2}^2.
\end{align*}
\\
\underline{Estimates for $O_1$ to $O_6$}: These terms can be estimated by applying a standard argument, thus we obtain
\begin{align*}
    \abs{O_1}
    &\leq
    Ch^{2(q+1)} + C\tau^2 + \alpha \norm{\dtt \zeta^{k+1}}{L^2}^2,
    \\
    \abs{O_2}
    &\leq
    C\norm{\Delta_h \theta^k}{L^2}^2
    +
    \alpha \norm{\dtt \zeta^{k+1}}{L^2}^2,
    \\
    \abs{O_3}
    &\leq
    C\tau^2 + \alpha \norm{\dtt \zeta^{k+1}}{L^2}^2,
    \\
    \abs{O_4}
    &\leq
    C\norm{\theta^k}{H^1}^2 + \alpha \norm{\Delta_h \theta^k}{L^2}^2
    +
    \alpha \norm{\dtt \zeta^{k+1}}{L^2}^2
    +
    C\left(1+\norm{\mu_h^{k+1}}{H^1}^2\right) (h^{2(q+1)}+ \tau^2),
    \\
    \abs{O_5}
    &\leq
    C\norm{\zeta^{k+1}}{L^2}^2
    +
    C\norm{\theta^k}{L^2}^2
    +
    Ch^{2(q+1)} + C\tau^2
    +
    \alpha \norm{\dtt \zeta^{k+1}}{L^2}^2,
    \\
    \abs{O_6}
    &\leq
    Ch^{2(q+1)}
    +
    C\norm{\xi^{k+1}}{L^2}^2
    +
    \alpha \norm{\dtt \zeta^{k+1}}{L^2}^2.
\end{align*}
\\
\underline{Conclusion of the proof}: Collecting all the above estimates, choosing $\alpha>0$ sufficiently small to absorb relevant terms, and summing the resulting inequality over $j\in \{0,1,\ldots,k-1\}$, we obtain
\begin{align*}
    &\norm{\xi^k}{L^2}^2
    +
    \norm{\nabla \zeta^k}{L^2}^2
    +
    \tau \sum_{j=1}^k \left(\norm{\dtt \theta^j}{L^2}^2 + \norm{\dtt \zeta^j}{L^2}^2 \right)
    \\
    &\leq
    C(h^{2(q+1)}+\tau^2)\, \tau \sum_{j=1}^k \left(1+\norm{\mu_h^j}{H^1}^2\right)
    +
    C\tau \sum_{j=1}^k \left(\norm{\theta^{j-1}}{H^1}^2 + \norm{\Delta_h \theta^{j-1}}{L^2}^2 + \norm{\xi^j}{L^2}^2
    + \norm{\zeta^j}{L^2}^2 \right)
    \\
    &\leq
    C(h^{2(q+1)}+\tau^2),
\end{align*}
where in the last step we used \eqref{equ:stab sum mu sigma H1} and \eqref{equ:error theta n eul}, thus proving \eqref{equ:error xi k L2}. Finally, \eqref{equ:error zeta Linfty 2d} follows from \eqref{equ:error theta n eul}, \eqref{equ:error xi k L2}, and the discrete Sobolev inequality \eqref{equ:disc sob 2d}.
\end{proof}

In the following proposition, we establish an auxiliary superconvergence estimate for $\norm{\nabla\theta^k}{L^2}$ and $\norm{\Delta_h \theta^k}{L^2}$, which in turn yields an estimate for $\norm{\theta^k}{L^\infty}$.

\begin{proposition}
For sufficiently small $h,\tau>0$, we have
\begin{align}\label{equ:error Delta theta L2}
    \norm{\nabla \theta^k}{L^2}^2
    +
    \norm{\Delta_h \theta^k}{L^2}^2
    +
    \tau \sum_{j=1}^k \norm{\Delta_h \xi^j}{L^2}^2
    \leq
    C(h^{2(q+1)}+ \tau^2).
\end{align}
In particular, we have for $d\leq 3$,
\begin{align}\label{equ:error theta Linfty}
    \norm{\theta^k}{L^\infty}^2 \leq
    C(h^{2(q+1)}+ \tau^2),
\end{align}
where $C$ depends on $T$, but is independent of $k$, $h$, and $\tau$.
\end{proposition}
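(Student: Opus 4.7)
The plan is to split the argument into three steps, exploiting the pointwise bounds already established in \eqref{equ:error theta n eul} and \eqref{equ:error xi k L2} rather than running a fresh energy argument from scratch.

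First, I would derive an algebraic identity for $\Delta_h\theta^{k+1}$ directly from \eqref{equ:fem euler sub mu}. For any $\psi\in V_h$, the definition of the discrete Laplacian \eqref{equ:disc laplacian} gives $\epsilon^2\inpro{\nabla\theta^{k+1}}{\nabla\psi} = -\epsilon^2\inpro{\Delta_h\theta^{k+1}}{\psi}$, and the $L^2$-orthogonality \eqref{equ:orth proj} of $\Pi_h$ lets me replace every non-$V_h$ factor in \eqref{equ:fem euler sub mu} by its $\Pi_h$-projection without changing the pairing with $\psi$. Since the resulting identity holds for all $\psi\in V_h$ with both sides in $V_h$, I obtain the pointwise equality
\begin{align*}
  \epsilon^2\Delta_h\theta^{k+1}
  = -\xi^{k+1} + \lambda\bigl(\theta^{k+1}+\Pi_h\rho^{k+1}\bigr) - \chi_0\bigl(\zeta^{k+1}+\Pi_h\omega^{k+1}\bigr) - \Pi_h\eta^{k+1} + \Pi_h\mathcal{R}^{k+1},
\end{align*}
where $\mathcal{R}^{k+1}$ collects the three SAV-type terms from \eqref{equ:fem euler sub mu}. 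Taking the $L^2$ norm, using the stability of $\Pi_h$, applying \eqref{equ:error xi k L2} for $\norm{\xi^{k+1}}{L^2}$, \eqref{equ:error theta n eul} for $\norm{\theta^{k+1}}{L^2}$, $\norm{\zeta^{k+1}}{L^2}$ and $\abs{e^k}$, the Ritz bounds \eqref{equ:Ritz ineq} for $\rho^{k+1},\omega^{k+1},\eta^{k+1}$, and Lemma~\ref{lem:rf frac est} for $\mathcal{R}^{k+1}$, I arrive at $\norm{\Delta_h\theta^{k+1}}{L^2}^2 \leq C(h^{2(q+1)}+\tau^2)$. The gradient bound then follows from the interpolation inequality \eqref{equ:nab disc lap}: $\norm{\nabla\theta^{k+1}}{L^2}^2 \leq \norm{\theta^{k+1}}{L^2}\norm{\Delta_h\theta^{k+1}}{L^2} \leq C(h^{2(q+1)}+\tau^2)$.

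Next, to bound $\tau\sum_j\norm{\Delta_h\xi^j}{L^2}^2$, I would test \eqref{equ:fem euler sub u} with $\phi = -\Delta_h\xi^{k+1}\in V_h$. The leading gradient term on the right-hand side yields $-\inpro{\nabla\xi^{k+1}}{\nabla(-\Delta_h\xi^{k+1})} = -\norm{\Delta_h\xi^{k+1}}{L^2}^2$ by \eqref{equ:disc laplacian}, so after rearranging I only need to absorb the remaining terms. Young's inequality handles $\inpro{\dtt\theta^{k+1}}{-\Delta_h\xi^{k+1}} \leq C\norm{\dtt\theta^{k+1}}{L^2}^2 + \alpha\norm{\Delta_h\xi^{k+1}}{L^2}^2$, while the consistency errors $\dtt\rho^{k+1}$ and $\dtt u^{k+1}-\partial_t u^{k+1}$ contribute only $O(h^{2(q+1)}+\tau^2)$ by \eqref{equ:Ritz ineq} and \eqref{equ:dt vn min par v}. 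For the nonlinear contributions involving $P(u_h^k)-P(u^{k+1})$, I would use the Lipschitz bound \eqref{equ:P lipschitz}, the embeddings $H^1\hookrightarrow L^3\cap L^6$, and crucially the \emph{pointwise} bound $\norm{\theta^k}{H^1}^2 \leq C(h^{2(q+1)}+\tau^2)$ established in the first step, which makes these terms dominated by $C(h^{2(q+1)}+\tau^2)\norm{\sigma_h^{k+1}-\mu_h^{k+1}}{H^1}^2 + \alpha\norm{\Delta_h\xi^{k+1}}{L^2}^2$. After choosing $\alpha$ small, summing over $j$, and invoking \eqref{equ:error xi k L2} for $\tau\sum_j\norm{\dtt\theta^j}{L^2}^2$ together with \eqref{equ:stab sum mu sigma H1}--\eqref{equ:stab sum mu L2} and \eqref{equ:stab n H1} for $\tau\sum_j\norm{\sigma_h^j-\mu_h^j}{H^1}^2$, the stated estimate follows.

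Finally, \eqref{equ:error theta Linfty} is obtained by a direct application of the discrete Gagliardo--Nirenberg inequality \eqref{equ:disc lapl L infty} with $\norm{\theta^k}{L^6}\leq C\norm{\theta^k}{H^1}$ and the bounds from the first step; a short check of exponents shows no logarithmic factor is needed for $d\leq 3$. The main obstacle I anticipate is the second step: without the pointwise $H^1$-smallness of $\theta^k$ from the first step, the nonlinear differences of $P$ would only be summable in time rather than pointwise small, forcing a delicate summation-by-parts or a re-use of the discrete Gronwall lemma. The elliptic identity in the first step is what keeps the second step clean by providing pointwise control of $\norm{\theta^k}{H^1}$ directly from the $\mu$-equation.
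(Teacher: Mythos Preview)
Your proposal is correct and follows essentially the same route as the paper. In both arguments the bounds on $\norm{\Delta_h\theta^{k+1}}{L^2}$ and $\norm{\nabla\theta^{k+1}}{L^2}$ come from the $\mu$-error equation \eqref{equ:fem euler sub mu} alone (the paper tests with $\psi=\Delta_h\theta^{k+1}$, which is equivalent to your pointwise identity followed by taking the $L^2$ norm), the summed bound on $\norm{\Delta_h\xi^j}{L^2}$ comes from testing \eqref{equ:fem euler sub u} with $\pm\Delta_h\xi^{k+1}$, and the $L^\infty$ estimate is the discrete Gagliardo--Nirenberg inequality \eqref{equ:disc lapl L infty}. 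The only cosmetic differences are that the paper obtains $\lambda\norm{\nabla\theta^{k+1}}{L^2}^2$ directly from the test function rather than via the interpolation \eqref{equ:nab disc lap}, and that the paper omits the details of Step~2 which you spell out correctly.
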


\begin{proof}
Recall the notations in \eqref{equ:phk bhk} and \eqref{equ:beta k bk}.
We set $\psi=\Delta_h \theta^{k+1}$ in \eqref{equ:fem euler sub mu} to obtain
\begin{align}\label{equ:Delta th L2}
    &\epsilon^2 \norm{\Delta_h \theta^{k+1}}{L^2}^2
    +
    \lambda \norm{\nabla \theta^{k+1}}{L^2}^2
    \nonumber\\
    &=
    - \inpro{\xi^{k+1}+\eta^{k+1}}{\Delta_h \theta^{k+1}}
    +
    \lambda \inpro{\rho^{k+1}}{\Delta_h \theta^{k+1}}
    -
    \chi_0 \inpro{\zeta^{k+1}+\omega^{k+1}}{\Delta_h \theta^{k+1}}
    \nonumber\\
    &\quad
    +
		r_h^{k+1} \inpro{b_h^k-b^k}{\Delta_h \theta^{k+1}}
		+ 
		e^k \inpro{b^k}{\Delta_h \theta^{k+1}}
		+
		r^{k+1} \inpro{b^k-b^{k+1}}{\Delta_h \theta^{k+1}}
    \nonumber\\
    &=P_1+P_2+\ldots +P_6.
\end{align}
The terms $P_1$, $P_2$, and $P_3$ can be estimated by Young's inequality. We have for any $\alpha>0$,
\begin{align*}
    \abs{P_1}+\abs{P_2}+\abs{P_3}
    &\leq
    C\norm{\xi^{k+1}}{L^2}^2
    +
    C\norm{\zeta^{k+1}}{L^2}^2
    +
    Ch^{2(q+1)}
    +
    \alpha \norm{\Delta_h \theta^{k+1}}{L^2}^2
    \\
    &\leq
    C(h^{2(q+1)}+ \tau^2)+\alpha \norm{\Delta_h \theta^{k+1}}{L^2}^2,
\end{align*}
where in the last step we used \eqref{equ:error xi k L2} and \eqref{equ:error theta n eul}. Similarly, for the next three terms, we use \eqref{equ:frac rk f min rk1 f}, \eqref{equ:frac L2 uhk uk}, and Young's inequality to obtain
\begin{align*}
    \abs{P_4}+\abs{P_5}+\abs{P_6}
    &\leq
    C(h^{2(q+1)}+ \tau^2)+\alpha \norm{\Delta_h \theta^{k+1}}{L^2}^2,
\end{align*}
Continuing from \eqref{equ:Delta th L2}, by choosing $\alpha>0$ sufficiently small to absorb the term $\norm{\Delta_h \theta^{k+1}}{L^2}^2$, we show \eqref{equ:error Delta theta L2} for the first two terms on the left-hand side.

To show the inequality for the remaining term, we take $\phi=\tau \Delta_h \xi^{k+1}$ in \eqref{equ:fem euler sub u}, sum over $j\in \{0,1,\ldots,k-1\}$, and apply Young's inequality as before. Since the argument is straightforward by now, further details are omitted. Finally, inequality \eqref{equ:error theta Linfty} follows by \eqref{equ:error Delta theta L2} and \eqref{equ:disc lapl L infty}.
\end{proof}

Next, we show an auxiliary error estimate for $\norm{\nabla \xi^k}{L^2}$.

\begin{proposition}
For sufficiently small $h,\tau>0$, we have
\begin{align}\label{equ:error xi H1}
    \norm{\nabla \xi^k}{L^2}^2
    +
    \tau \sum_{j=1}^k \norm{\nabla \Delta_h \xi^j}{L^2}^2 
		\leq C(h^{2q}+ \tau^2),
\end{align}
where $C$ depends on $T$, but is independent of $k$, $h$, and $\tau$.
\end{proposition}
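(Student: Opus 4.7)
The plan is to mirror the strategy used to prove \eqref{equ:error xi k L2}, but one level higher in spatial regularity, by combining \eqref{equ:fem euler sub u} and the time-differenced $\mu$-equation \eqref{equ:dtt xi kk} so that the highest-order cross terms cancel. To obtain a pointwise bound on $\norm{\nabla\xi^{k+1}}{L^2}^2$, I would test \eqref{equ:fem euler sub u} with $\phi = -\epsilon^2 \Delta_h \dtt\theta^{k+1} \in V_h$ and \eqref{equ:dtt xi kk} with $\psi = -\Delta_h\xi^{k+1}\in V_h$, and then add the two identities. Using the discrete identities $\inpro{\dtt\theta^{k+1}}{-\Delta_h \dtt\theta^{k+1}} = \norm{\nabla\dtt\theta^{k+1}}{L^2}^2$, $\inpro{\dtt\xi^{k+1}}{-\Delta_h\xi^{k+1}} = \inpro{\nabla\dtt\xi^{k+1}}{\nabla\xi^{k+1}}$, the symmetry relation $\inpro{\nabla a}{\nabla\Delta_h b} = -\inpro{\Delta_h a}{\Delta_h b}$ for $a,b\in V_h$, and \eqref{equ:ab ide}, the two cross terms $\pm\epsilon^2\inpro{\Delta_h\dtt\theta^{k+1}}{\Delta_h\xi^{k+1}}$ cancel exactly. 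On the LHS we are left with the telescoping quantity $\frac{1}{2\tau}\bigl(\norm{\nabla\xi^{k+1}}{L^2}^2 - \norm{\nabla\xi^k}{L^2}^2\bigr)$, the non-negative dissipation $\frac{1}{2\tau}\norm{\nabla(\xi^{k+1}-\xi^k)}{L^2}^2$, and the term $\epsilon^2\norm{\nabla\dtt\theta^{k+1}}{L^2}^2$.

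To generate the summed $\tau\sum_j\norm{\nabla\Delta_h\xi^j}{L^2}^2$ bound, I would additionally test \eqref{equ:fem euler sub u} with $\phi = \Delta_h^2\xi^{k+1}\in V_h$. Iterated integration by parts gives $-\inpro{\nabla\xi^{k+1}}{\nabla\Delta_h^2\xi^{k+1}} = -\norm{\nabla\Delta_h\xi^{k+1}}{L^2}^2$ and $\inpro{\dtt\theta^{k+1}}{\Delta_h^2\xi^{k+1}} = -\inpro{\nabla\dtt\theta^{k+1}}{\nabla\Delta_h\xi^{k+1}}$, so after bounding the remaining terms by Young's inequality and absorbing all contributions of the form $\alpha\norm{\nabla\Delta_h\xi^{k+1}}{L^2}^2$ into the LHS, one obtains a pointwise control of $\norm{\nabla\Delta_h\xi^{k+1}}{L^2}^2$ by $\norm{\nabla\dtt\theta^{k+1}}{L^2}^2$ plus data and lower-order error quantities. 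Combined with the $\norm{\nabla\dtt\theta^j}{L^2}^2$ bound supplied by the first step after summing, this yields the required accumulated estimate for $\nabla\Delta_h\xi^j$.

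The remaining terms on the right-hand side are estimated by standard H\"older's and Young's inequalities, the approximation bounds \eqref{equ:Ritz ineq}, \eqref{equ:dt vn Lp}, \eqref{equ:dt vn min par v}, the SAV-related inequalities in Lemma~\ref{lem:rf frac est}, \eqref{equ:pre est N5}, and \eqref{equ:pre est N6}, together with the stability bounds \eqref{equ:stab sum mu sigma H1}, \eqref{equ:stab u Linfty}, \eqref{equ:stab dtu L2} and the previously established error estimates \eqref{equ:error theta n eul}, \eqref{equ:error xi k L2}, \eqref{equ:error Delta theta L2}. The coupling with the $n$-equation produces terms of the form $\chi_0\inpro{\dtt\zeta^{k+1}+\cdots}{\Delta_h\xi^{k+1}}$, which are absorbed using the summed $\norm{\dtt\zeta^j}{L^2}^2$ bound from \eqref{equ:error xi k L2}. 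The reduction from $h^{2(q+1)}$ in \eqref{equ:error xi k L2} to $h^{2q}$ arises from the gradient Ritz error $\norm{\nabla\eta^{k+1}}{L^2}\leq Ch^q$ provided by \eqref{equ:Ritz ineq}, which is the dominant $h$-contribution in the present estimate. A discrete Gronwall argument, legitimate by virtue of \eqref{equ:stab sum mu sigma H1}, then closes the inequality.

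The main obstacle is handling the nonlinear SAV terms at this higher regularity level: the bounds \eqref{equ:pre est N5}--\eqref{equ:pre est N6} were stated only for $L^2$-type test functions, so when they are coupled with $-\Delta_h\xi^{k+1}$ or $\Delta_h^2\xi^{k+1}$ one must first integrate by parts to transfer a derivative onto the SAV coefficient (invoking \eqref{equ:frac H1 uhk uk} and the analogous $H^1$-Lipschitz estimate for $r_h^{k-1}/(\mathcal{E}_1[u_h^{k-1}]+B)$ available from the proof of \eqref{equ:rhk rk frac}) or combine them with the Step~2 bound on $\norm{\Delta_h\xi^{k+1}}{L^2}$. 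A further subtlety is the ordering of the estimates: the cross-coupling with the $n$-equation necessitates the $\norm{\dtt\zeta^j}{L^2}^2$ bound from \eqref{equ:error xi k L2} before the present argument can be closed.
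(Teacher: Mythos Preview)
Your overall strategy is correct and would close, but it is more elaborate than the paper's route. The paper collapses your two-step argument into a single one: rather than first testing \eqref{equ:fem euler sub u} with $\phi=-\epsilon^2\Delta_h\dtt\theta^{k+1}$ (your Step~A) and then separately with $\phi=\Delta_h^2\xi^{k+1}$ (your Step~B), the paper takes only $\phi=\epsilon^2\Delta_h^2\xi^{k+1}$ in \eqref{equ:fem euler sub u} together with your same choice $\psi=-\Delta_h\xi^{k+1}$ in \eqref{equ:dtt xi kk}. The cross terms that arise are then $\mp\epsilon^2\inpro{\nabla\dtt\theta^{k+1}}{\nabla\Delta_h\xi^{k+1}}$, which cancel directly, leaving \emph{both} the telescoping quantity $\tfrac{1}{2\tau}\bigl(\norm{\nabla\xi^{k+1}}{L^2}^2-\norm{\nabla\xi^k}{L^2}^2\bigr)$ and the dissipation $\epsilon^2\norm{\nabla\Delta_h\xi^{k+1}}{L^2}^2$ on the left-hand side in one stroke. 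The intermediate quantity $\norm{\nabla\dtt\theta^{k+1}}{L^2}^2$ never appears, so your Step~A is redundant: what you call Step~B, when added to the $\psi$-identity from Step~A, \emph{is} the whole argument.

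Your last paragraph overcomplicates the SAV terms. The bounds \eqref{equ:pre est N5} and \eqref{equ:pre est N6} hold for any $\psi\in V_h$, and $\Delta_h\xi^{k+1}\in V_h$ is admissible as is: applying them with $\psi=\Delta_h\xi^{k+1}$ simply puts $\norm{\Delta_h\xi^{k+1}}{L^2}^2$ on the right, and $\tau\sum_j\norm{\Delta_h\xi^j}{L^2}^2\leq C(h^{2(q+1)}+\tau^2)$ is already available from \eqref{equ:error Delta theta L2}. No integration by parts onto the SAV coefficient, and no $H^1$-Lipschitz refinement of \eqref{equ:rhk rk frac}, is required. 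The proof then closes by direct summation using \eqref{equ:stab n H1}, \eqref{equ:stab Delta u L2}, \eqref{equ:error xi k L2}, and \eqref{equ:error Delta theta L2}; the paper does not invoke a Gronwall step here.
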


\begin{proof}
Taking $\phi=\epsilon^2 \Delta_h^2 \xi^{k+1}$ in \eqref{equ:fem euler sub u} and noting~\eqref{equ:disc lap sq}, we have
\begin{align}\label{equ:ep nab Delta xi}
    &\epsilon^2 \norm{\nabla\Delta_h \xi^{k+1}}{L^2}^2
    -
    \epsilon^2 \inpro{\dtt \nabla \theta^{k+1}}{\nabla\Delta_h \xi^{k+1}}
    \nonumber\\
    &=
    -\epsilon^2 \inpro{\dtt \rho^{k+1}+ \dtt u^{k+1}-\partial_t u^{k+1}}{\Delta_h^2 \xi^{k+1}}
    +
	\inpro{\big(p_h^k-P(u^{k+1})\big)\cdot (\sigma_h^{k+1}-\mu_h^{k+1})}{\Delta_h^2 \xi^{k+1}}
		\nonumber\\
		&\quad
		+
		\inpro{P(u^{k+1})\cdot \big(\delta^{-1}(\zeta^{k+1}+\omega^{k+1}) - \chi_0(\theta^k+\rho^k+u^k-u^{k+1})\big)}{\Delta_h^2 \xi^{k+1}}
		\nonumber\\
		&\quad
		- \inpro{P(u^{k+1})\cdot (\xi^{k+1}+\eta^{k+1})}{\Delta_h^2 \xi^{k+1}}
        \nonumber\\
        &=:Q_1+Q_2+Q_3+Q_4.
\end{align}
Next, setting $\psi=-\Delta_h \xi^{k+1}$ in \eqref{equ:dtt xi kk} and similarly noting \eqref{equ:disc lap sq}, we obtain
\begin{align}\label{equ:nab xi L2}
    &\frac12 \left(\norm{\nabla \xi^{k+1}}{L^2}^2 - \norm{\nabla \xi^k}{L^2}^2 \right)
    +
    \frac12 \norm{\nabla \xi^{k+1}-\nabla \xi^k}{L^2}^2
    +
    \epsilon^2 \inpro{\nabla \dtt \theta^{k+1}}{\nabla \Delta_h \xi^{k+1}}
    \nonumber\\
    &=
    \inpro{\dtt \eta^{k+1}+ \dtt \mu^{k+1}- \partial_t \mu^{k+1}}{\Delta_h \xi^{k+1}}
    +
    \lambda \inpro{\dtt \theta^{k+1} + \dtt \rho^{k+1} + \dtt u^{k+1}- \partial u^{k+1}}{\Delta_h \xi^{k+1}} 
    \nonumber\\
    &\quad
    -
    \chi_0 \inpro{\dtt \zeta^{k+1}+ \dtt \omega^{k+1} + \dtt n^{k+1}- \partial_t n^{k+1}}{\Delta_h \xi^{k+1}}
    \nonumber\\
    &\quad
    +
    \inpro{\frac{1}{\tau} \left(\frac{r_h^{k+1}}{\sqrt{\beta_h^k}} - \frac{r_h^k}{\sqrt{\beta_h^{k-1}}} \right) f'(u_h^k)}{\Delta_h \xi^{k+1}}
    \nonumber\\
    &\quad
    +
    \inpro{\left(\frac{r_h^k}{\sqrt{\beta_h^{k-1}}}\right) \left(\frac{f'(u_h^k)-f'(u_h^{k-1})}{\tau}\right) - \left(\frac{r^{k+1}}{\sqrt{\beta^{k+1}}} \right) f''(u^{k+1}) \cdot \partial_t u^{k+1}}{\Delta_h \xi^{k+1}}
    \nonumber\\
    &=:R_1+R_2+\ldots+R_5.
\end{align}
We now add \eqref{equ:ep nab Delta xi} and \eqref{equ:nab xi L2}, then estimate each term on the right-hand side of each equation. 
\\[2ex]
\underline{Estimates for $Q_1$ to $Q_4$}: Firstly, by H\"older's and Young's inequalities, employing \eqref{equ:Ritz ineq} and \eqref{equ:disc lap H-1},  we have for any $\alpha>0$,
\begin{align*}
    \abs{Q_1}
    &\leq
    C \norm{\dtt \rho^{k+1}+\dtt u^{k+1}-\partial_t u^{k+1}}{H^1} \norm{\Delta_h^2 \xi^{k+1}}{\widetilde{H}^{-1}}
    \\
    &\leq
    Ch^{2q}+C\tau^2 + \alpha \norm{\nabla\Delta_h \xi^{k+1}}{L^2}^2.
\end{align*}
Next, by H\"older's and Young's inequalities, noting \eqref{equ:Ritz ineq} and \eqref{equ:disc lap H-1}, as well as error estimates \eqref{equ:error theta n eul} and \eqref{equ:error xi k L2},
\begin{align*}
    \abs{Q_2}
    &\leq
    C\norm{\big(p_h^k-p^{k+1}\big) (\sigma_h^{k+1}-\mu_h^{k+1})}{H^1}  \norm{\Delta_h^2 \xi^{k+1}}{\widetilde{H}^{-1}}
    \\
    &\leq
    C\norm{\theta^k+\rho^k+u^k-u^{k+1}}{H^1} \norm{\sigma_h^{k+1}-\mu_h^{k+1}}{L^\infty} \norm{\Delta_h^2 \xi^{k+1}}{\widetilde{H}^{-1}}
    \\
    &\quad
    +
    C\norm{\theta^k+\rho^k+u^k-u^{k+1}}{L^6} \norm{\nabla\sigma_h^{k+1}- \nabla\mu_h^{k+1}}{L^3} \norm{\Delta_h^2 \xi^{k+1}}{\widetilde{H}^{-1}}
    \\
    &\leq
    C(h^{2q}+\tau^2) \left(1+\norm{\Delta_h \sigma_h^{k+1}}{L^2}^2 + \norm{\Delta_h \mu_h^{k+1}}{L^2}^2 \right) 
    +
    \alpha \norm{\nabla\Delta_h \xi^{k+1}}{L^2}^2,
\end{align*}
where in the last step we also used \eqref{equ:disc lapl L infty}, \eqref{equ:nab vh L3}, \eqref{equ:stab u H1 n L2}, and \eqref{equ:stab n H1}. The terms $Q_3$ and $Q_4$ can be estimated in a standard way, giving
\begin{align*}
    \abs{Q_3}
    &\leq
    \norm{p^{k+1} \big(\delta^{-1}(\zeta^{k+1}+\omega^{k+1}) - \chi_0(\theta^k+\rho^k+u^k-u^{k+1})\big)}{H^1} 
    \norm{\Delta_h^2 \xi^{k+1}}{\widetilde{H}^{-1}}
    \\
    &\leq
    C\left(1+\norm{u^{k+1}}{W^{1,\infty}}\right) \norm{\zeta^{k+1}+\omega^{k+1}+\theta^k+\rho^k+ u^k-u^{k+1}}{H^1} \norm{\Delta_h^2 \xi^{k+1}}{\widetilde{H}^{-1}}
    \\
    &\leq
    C(h^{2q}+\tau^2) + \alpha \norm{\nabla\Delta_h \xi^{k+1}}{L^2}^2,
\end{align*}
where in the last step we also used the regularity of $u$ in \eqref{equ:reg u euler}, as well as error estimates \eqref{equ:error xi k L2} and \eqref{equ:error Delta theta L2}.
Similarly, for the term $Q_4$ we apply Young's inequality and \eqref{equ:error xi k L2} to obtain
\begin{align*}
    \abs{Q_4}
    &\leq
    C\norm{p^{k+1}}{W^{1,\infty}} \norm{\xi^{k+1}+\eta^{k+1}}{H^1} \norm{\Delta_h^2 \xi^{k+1}}{\widetilde{H}^{-1}}
    \\
    &\leq
    C(h^{2q}+\tau^2) + \alpha \norm{\nabla \xi^{k+1}}{L^2}^2 + \alpha \norm{\nabla\Delta_h \xi^{k+1}}{L^2}^2.
\end{align*}
\\
\underline{Estimates for $R_1$ to $R_5$}: For the first three terms, we apply Young's inequality, \eqref{equ:Ritz ineq}, and \eqref{equ:dt vn min par v} to obtain
\begin{align*}
    \abs{R_1}
    &\leq
    Ch^{2(q+1)}+ C\tau^2 + \alpha \norm{\Delta_h \xi^{k+1}}{L^2}^2,
    \\
    \abs{R_2}
    &\leq
    Ch^{2(q+1)}+ C\tau^2 + C\norm{\dtt \theta^{k+1}}{L^2}^2 + \alpha \norm{\Delta_h \xi^{k+1}}{L^2}^2,
    \\
    \abs{R_3}
    &\leq
    Ch^{2(q+1)}+ C\tau^2 + C\norm{\dtt \zeta^{k+1}}{L^2}^2 + \alpha \norm{\Delta_h \xi^{k+1}}{L^2}^2.
\end{align*}
For the terms $R_4$ and $R_5$, by \eqref{equ:pre est N5} and \eqref{equ:pre est N6} we have
\begin{align*}
    \abs{R_4}
    &\leq
    Ch^{2(q+1)} + C\tau^2 + C \norm{\Delta_h \xi^{k+1}}{L^2}^2 + \alpha \norm{\dtt \theta^k}{L^2}^2 + \alpha \norm{\dtt \theta^{k+1}}{L^2}^2,
    \\
    \abs{R_5}
    &\leq
    Ch^{2(q+1)}+ C\tau^2 + C\norm{\Delta_h \xi^{k+1}}{L^2}^2 + \alpha \norm{\dtt \theta^k}{L^2}^2.
\end{align*}
\\
\underline{Conclusion of the proof}: Altogether, choosing $\alpha>0$ sufficiently small, rearranging the terms, and summing over $j\in \{0,1,\ldots,k-1\}$, we infer that
\begin{align*}
    &\norm{\nabla \xi^k}{L^2}^2 
    +
    \tau \sum_{j=1}^k \norm{\nabla\Delta_h \xi^j}{L^2}^2
    \\
    &\leq
    C(h^{2q}+\tau^2) \tau\sum_{j=1}^k \left(1+\norm{\Delta_h \sigma_h^j}{L^2}^2 + \norm{\Delta_h \mu_h^j}{L^2}^2 \right) 
    +
    C\tau \sum_{j=1}^k \left(\norm{\dtt \theta^j}{L^2}^2 + \norm{\dtt \zeta^j}{L^2}^2 + \norm{\Delta_h \xi^j}{L^2}^2 \right)
    \\
    &\leq
    C(h^{2q}+\tau^2),
\end{align*}
where in the last step we used \eqref{equ:stab n H1}, \eqref{equ:stab Delta u L2}, \eqref{equ:error xi k L2}, and \eqref{equ:error Delta theta L2}.
\end{proof}

We are now finally in a position to state our main theorem, which shows optimal order convergence in various norms.

\begin{theorem}\label{the:main}
Let $\{\mathcal{T}_h\}_{h>0}$ be a family of quasi-uniform triangulations of a convex polytopal domain $\mathscr{D}$.
Let $(u_h^k,\mu_h^k,n_h^k,\sigma_h^k,r_h^k)$ be the solution of the fully discrete scheme \eqref{equ:fem euler}, and let $(u,\mu,n,\sigma,r)$ be a solution of \eqref{equ:tum sav} {satisfying the regularity assumption \eqref{equ:reg u euler}}. Suppose that Assumptions~\ref{ass:assum P}--\ref{ass:assum f} and the coercivity condition~\eqref{equ:ass lambda begin} hold. For sufficiently small $h,\tau >0$ and $k\in \{1,2,\ldots,\lfloor T/\tau \rfloor\}$, we have
\begin{align}
\label{equ:error euler Hs}
    \norm{u_h^k-u(t_k)}{H^s}
    +
    \norm{\mu_h^k-\mu(t_k)}{H^s}
    +
    \norm{n_h^k-n(t_k)}{H^s}
    +
    \abs{r_h^k-r(t_k)}
    &\leq
    C(h^{q+1-s}+\tau),
\end{align}
where $s\in \{0,1\}$.
Furthermore,
\begin{equation}\label{equ:error euler Linfty}
\begin{aligned}
    \norm{u_h^k-u(t_k)}{L^\infty}
    &\leq
    C(h^{q+1}+\tau) \abs{\ln h}^{\frac12}, \quad \text{for $d\leq 3$},
    \\
    \norm{n_h^k-n(t_k)}{L^\infty}
    &\leq
    C(h^{q+1}+\tau) \abs{\ln h}^{\frac12}, \quad \text{for $d\leq 2$}.
\end{aligned}
\end{equation}
The energy functional $\widetilde{\mathcal{E}}[u_h^k,n_h^k,r_h^k]$ is a good approximation of $\mathcal{E}[u(t_k),n(t_k)]$ in the sense that
\begin{align}\label{equ:good app ener}
    \abs{\widetilde{\mathcal{E}}[u_h^k,n_h^k,r_h^k] - \mathcal{E}[u(t_k),n(t_k)]} \leq C(h^q+\tau).
\end{align}
The constant $C$ depends on $T$, but is independent of $k$, $h$, and $\tau$.
\end{theorem}

\begin{proof}
The error estimate \eqref{equ:error euler Hs} follows at once by noting the splitting \eqref{equ:split uk}, \eqref{equ:split mu}, \eqref{equ:split n}, \eqref{equ:split rk}, together with the auxiliary error estimates \eqref{equ:error theta n eul}, \eqref{equ:error xi k L2}, \eqref{equ:error Delta theta L2}, \eqref{equ:error xi H1}, and \eqref{equ:Ritz ineq}. Similarly, inequality \eqref{equ:error euler Linfty} follows by the auxiliary error estimates \eqref{equ:error theta Linfty}, \eqref{equ:error zeta Linfty 2d}, and \eqref{equ:Ritz infty}.

Finally, we have
\begin{align*}
    &\abs{\widetilde{\mathcal{E}}[u_h^k,n_h^k,r_h^k] - \mathcal{E}[u(t_k),n(t_k)]}
    \\
    &\leq
    \frac{\epsilon^2}{2} \int_\mathscr{D} \abs{ \abs{\nabla u_h^k}^2 - \abs{\nabla u(t_k)}^2 } \dx 
    +
    \frac{\lambda}{2} \int_\mathscr{D} \abs{ \abs{u_h^k}^2 - \abs{u(t_k)}^2 } \dx 
    +
    \frac{1}{2\delta} \int_\mathscr{D} \abs{ \abs{n_h^k}^2 - \abs{n(t_k)}^2 } \dx 
    \\
    &\quad
    +
    \chi_0 \int_\mathscr{D} \abs{ u_h^k n_h^k - u(t_k) n(t_k) } \dx 
    +
    \abs{ \abs{r_h^k}^2 - \abs{r(t_k)}^2 }
    \\
    &\leq
    C \norm{u_h^k-u(t_k)}{H^1} + \norm{n_h^k-n(t_k)}{L^2} + \abs{r_h^k- r(t_k)},
\end{align*}
where in the last step we used the elementary inequality $|a^2-b^2|\leq \left(|a|+|b|\right) |a-b|$ together with H\"older's inequality, noting the uniform bounds on $\norm{u_h^k}{H^1}$, $\norm{n_h^k}{L^2}$, and $|r_h^k|$. This yields \eqref{equ:good app ener}, in view of \eqref{equ:error euler Hs}, as required.
\end{proof}

\section{Numerical simulations}\label{sec:exp}

We present numerical simulations of the scheme \eqref{equ:fem euler} using the open-source package~\textsc{FEniCS}~\cite{AlnaesEtal15}. Since no exact solution of the underlying equation is available, we verify the convergence order experimentally by means of extrapolation. Let $\left(u_h^k, \mu_h^k, n_h^k \right)$ denote the finite element solution at time-step $k$, with spatial mesh size $h$ and time-step size $\tau$. The extrapolated spatial (resp. temporal) convergence rate is defined as
\begin{equation*}
	h\text{-rate}(\cdot) :=  \log_2 \left[\frac{\max_k \norm{e_{2h}(\cdot)}{X}}{\max_k \norm{e_{h}(\cdot)}{X}}\right], \; \text{ and } \; 
    \tau\text{-rate}(\cdot) :=  \log_2 \left[\frac{ \norm{e_{2\tau}(\cdot)}{X}}{\norm{e_{\tau}(\cdot)}{X}}\right],
\end{equation*}
for $X \in \{L^2, L^\infty, H^1\}$. Here, $e_h(v) := v_{h}^k - v_{h/2}^k$ and $e_\tau(v)$ is the difference between the numerical solutions at time $T$ computed with time-step size $\tau$ and $\tau/2$. By choosing the time-step size $\tau$ sufficiently small, we ensure that the spatial discretisation error dominates, thereby allowing us to assess the spatial convergence rate. Similarly, fixing $h$ sufficiently small while refining $\tau$ allows us to verify the temporal convergence rate.

\subsection{Simulation 1 (growth of a single tumour)}

Let the domain $\mathscr{D}=[-1,1]^2$. We fix $B=4$, and take the parameters to be $\kappa=0.25$, $\lambda=0.001$, $\delta=0.4$, $p_0=50.0$, $\chi_0=0.02$, and $\epsilon=0.02$. The initial data is specified to be
\[
    u_0(x,y)=\frac12 + \frac12 \tanh\left(\frac{0.2-\sqrt{x^2+y^2}}{\epsilon\sqrt{2}} + 1\right),
    \;\text{ and }\; 
    n_0(x,y)=1-u_0(x,y).
\]
Snapshots of the tumour volume fraction $u$ and the nutrient volume fraction $n$ with mesh-size $h=1/64$ at selected times are shown in Figure~\ref{fig:snapshots u 2d 1} and Figure~\ref{fig:snapshots n 2d 1}, respectively. The colour indicates the relative value of the quantity. Plots of $e_h(u)$, $e_h(\mu)$, and $e_h(n)$ against $1/h$, with time-step size fixed at $\tau=5\times 10^{-4}$, are shown in Figures~\ref{fig:order u 1}, \ref{fig:order mu 1}, and \ref{fig:order n 1}, respectively.

The growth of a single circular tumour with a diffuse interface is simulated in this experiment under a large proliferation growth parameter and small chemotaxis parameter. The tumour is observed to grow steadily, maintaining its shape. Total combined mass of the tumour and the nutrient is conserved throughout the simulation. Both the free energy and the modified SAV energy of the system are observed to decay in accordance with the theory, as seen in Figure~\ref{fig:mass energy exp1}.

\begin{figure}[!htb]
	\centering
	\begin{subfigure}[b]{0.26\textwidth}
		\centering
		\includegraphics[width=\textwidth]{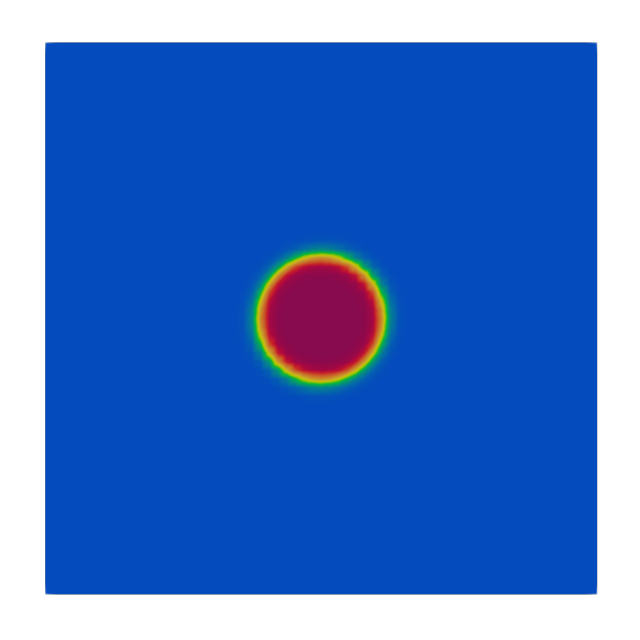}
		\caption{$t=0$}
	\end{subfigure}
	\begin{subfigure}[b]{0.26\textwidth}
		\centering
		\includegraphics[width=\textwidth]{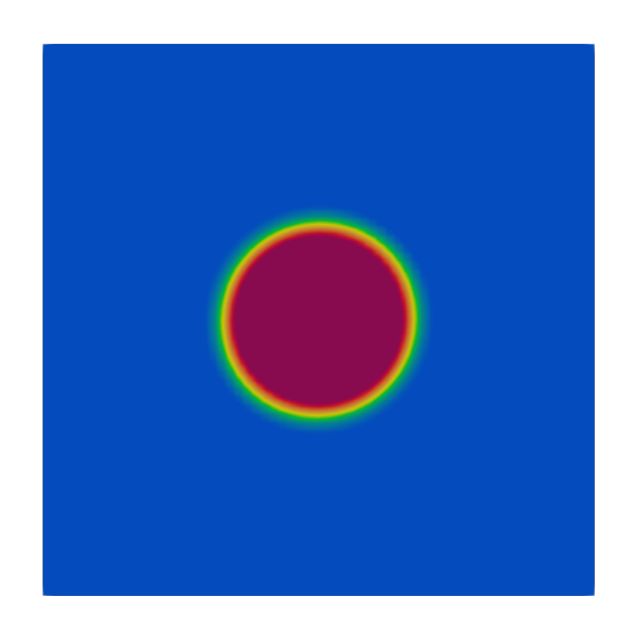}
		\caption{$t=0.04$}
	\end{subfigure}
	\begin{subfigure}[b]{0.26\textwidth}
		\centering
		\includegraphics[width=\textwidth]{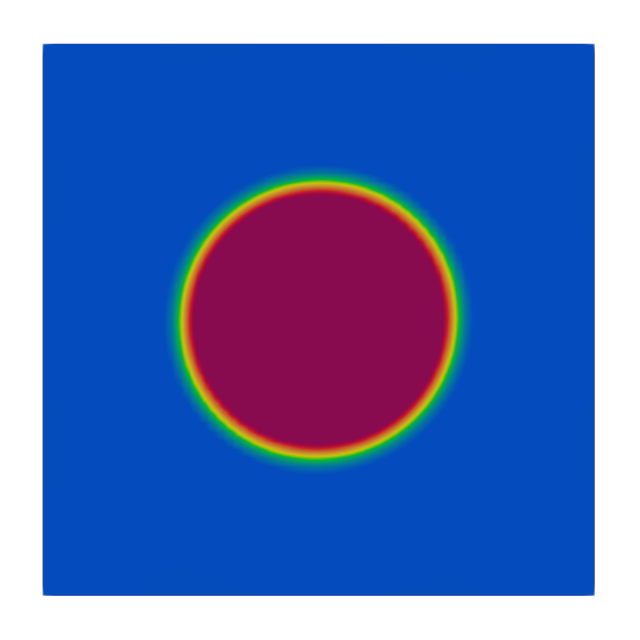}
		\caption{$t=0.08$}
	\end{subfigure}
	\begin{subfigure}[b]{0.1\textwidth}
		\centering
		\includegraphics[width=\textwidth]{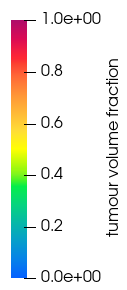}
	\end{subfigure}
	\begin{subfigure}[b]{0.26\textwidth}
		\centering
		\includegraphics[width=\textwidth]{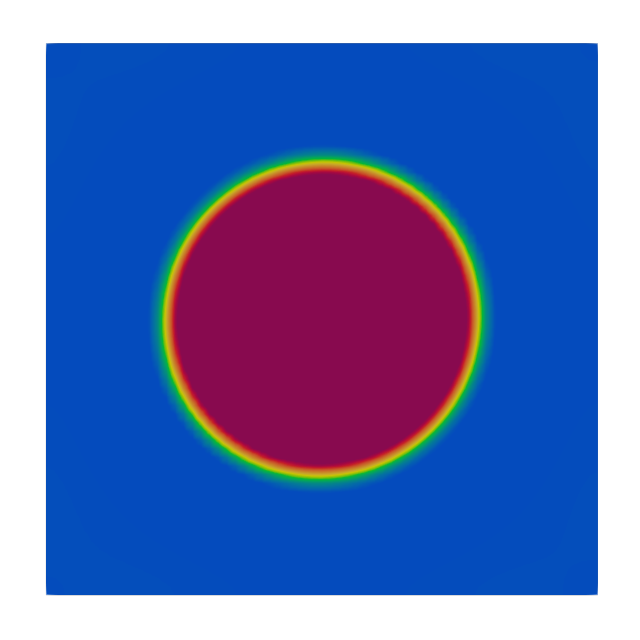}
		\caption{$t=0.1$}
	\end{subfigure}
	\begin{subfigure}[b]{0.26\textwidth}
		\centering
		\includegraphics[width=\textwidth]{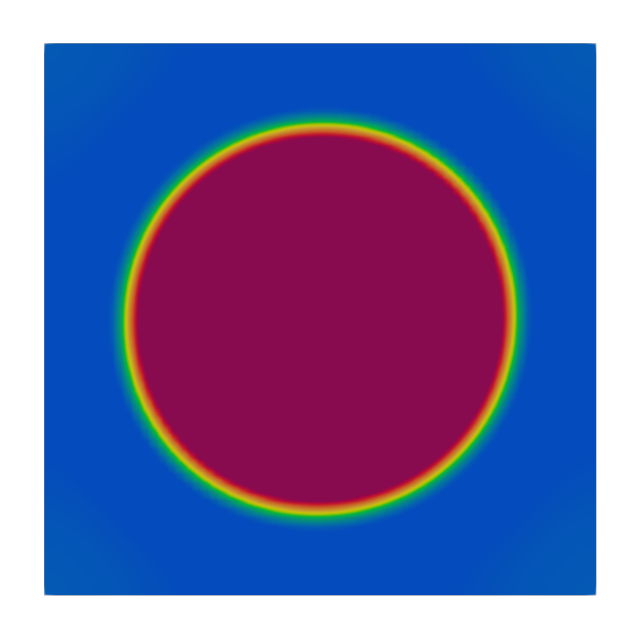}
		\caption{$t=0.14$}
	\end{subfigure}
	\begin{subfigure}[b]{0.26\textwidth}
		\centering
		\includegraphics[width=\textwidth]{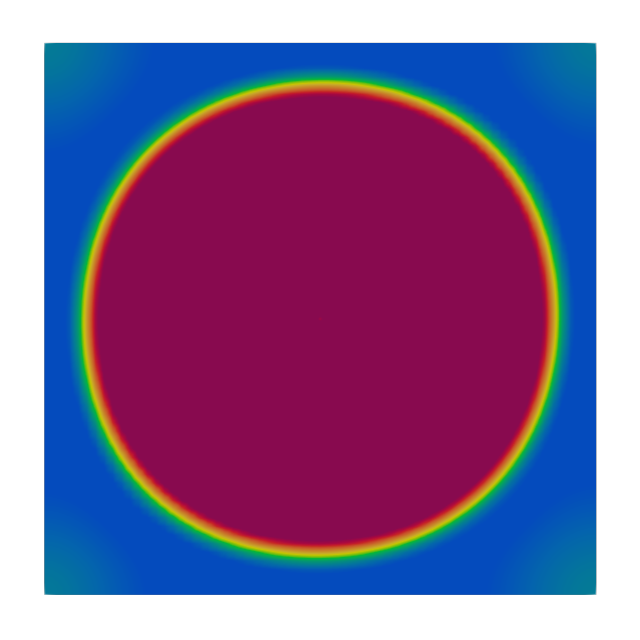}
		\caption{$t=0.2$}
	\end{subfigure}
	\begin{subfigure}[b]{0.1\textwidth}
		\centering
		\includegraphics[width=\textwidth]{u_exp1_legend.png}
	\end{subfigure}
	\caption{Snapshots of the tumour volume fraction $u$ in simulation 1.}
	\label{fig:snapshots u 2d 1}
\end{figure}

\begin{figure}[!htb]
	\centering
	\begin{subfigure}[b]{0.26\textwidth}
		\centering
		\includegraphics[width=\textwidth]{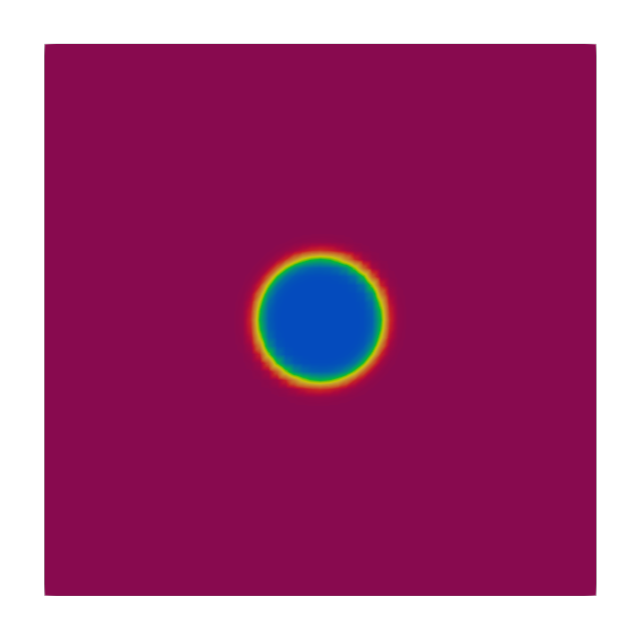}
		\caption{$t=0$}
	\end{subfigure}
	\begin{subfigure}[b]{0.26\textwidth}
		\centering
		\includegraphics[width=\textwidth]{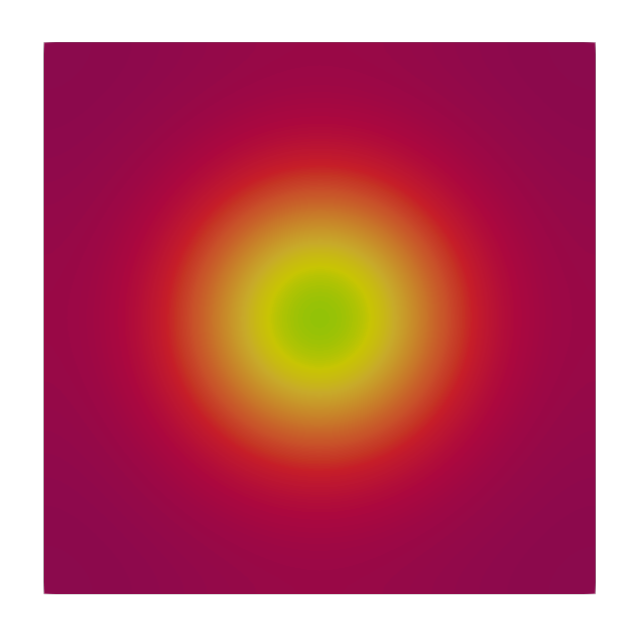}
		\caption{$t=0.04$}
	\end{subfigure}
	\begin{subfigure}[b]{0.26\textwidth}
		\centering
		\includegraphics[width=\textwidth]{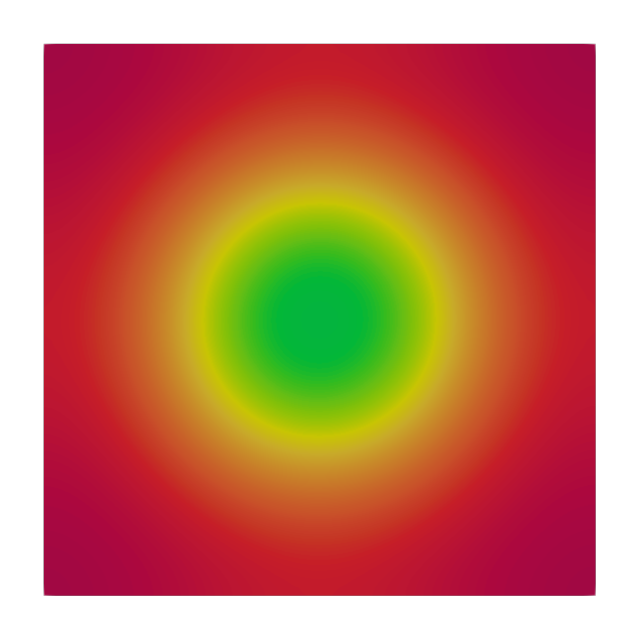}
		\caption{$t=0.08$}
	\end{subfigure}
	\begin{subfigure}[b]{0.1\textwidth}
		\centering
		\includegraphics[width=\textwidth]{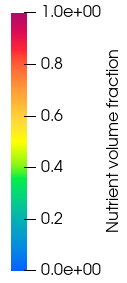}
	\end{subfigure}
	\begin{subfigure}[b]{0.26\textwidth}
		\centering
		\includegraphics[width=\textwidth]{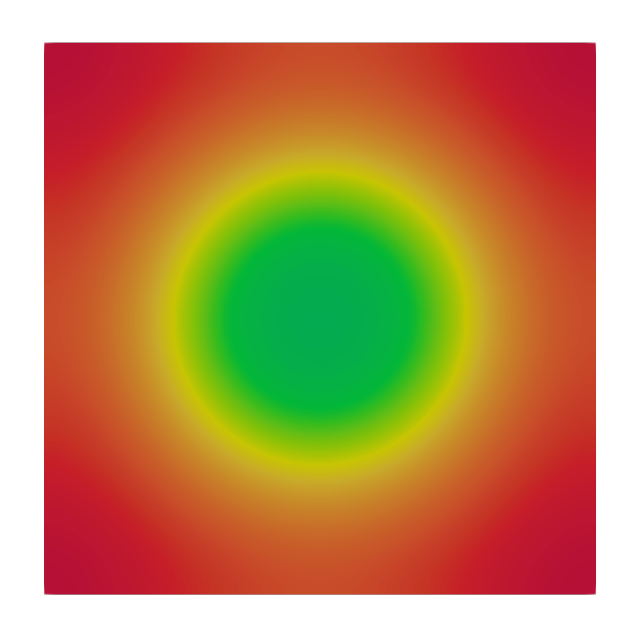}
		\caption{$t=0.1$}
	\end{subfigure}
	\begin{subfigure}[b]{0.26\textwidth}
		\centering
		\includegraphics[width=\textwidth]{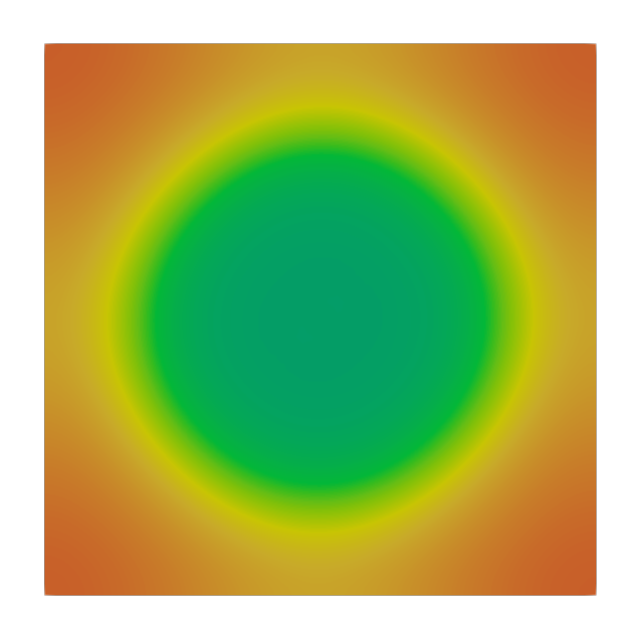}
		\caption{$t=0.14$}
	\end{subfigure}
	\begin{subfigure}[b]{0.26\textwidth}
		\centering
		\includegraphics[width=\textwidth]{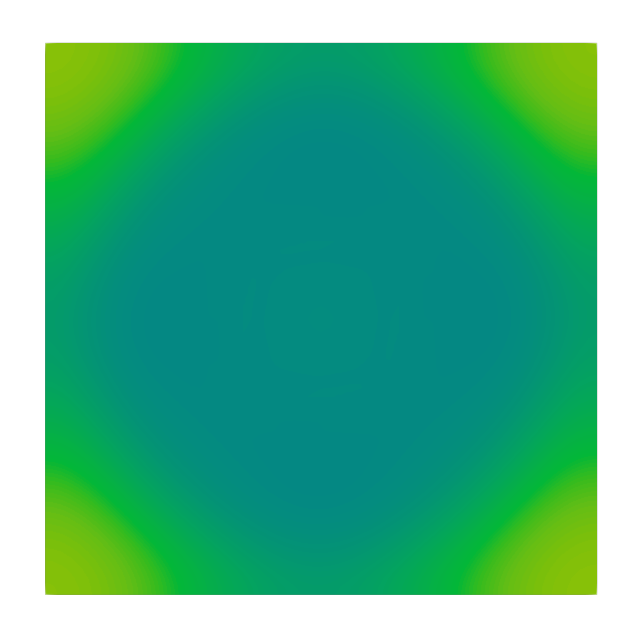}
		\caption{$t=0.2$}
	\end{subfigure}
	\begin{subfigure}[b]{0.1\textwidth}
		\centering
		\includegraphics[width=\textwidth]{n_exp1_legend.png}
	\end{subfigure}
	\caption{Snapshots of the nutrient volume fraction $n$ in simulation 1.}
	\label{fig:snapshots n 2d 1}
\end{figure}

\begin{figure}[!htb]
	\begin{subfigure}[b]{0.45\textwidth}
		\centering
		\begin{tikzpicture}
			\begin{axis}[
				title=Plot of $e_h(u)$ against $1/h$,
				height=1.4\textwidth,
				width=1\textwidth,
				xlabel= $1/h$,
				ylabel= $e_h(u)$,
				xmode=log,
				ymode=log,
				legend pos=south west,
				legend cell align=left,
				]
				\addplot+[mark=*,red] coordinates {(16,0.055)(32,0.017)(64,0.0058)(128,0.0016)(256,0.0004)};
				\addplot+[mark=*,green] coordinates {(16,0.12)(32,0.04)(64,0.013)(128,0.0039)(256,0.0012)};
                \addplot+[mark=*,blue] coordinates {(16,1.48)(32,0.85)(64,0.49)(128,0.25)(256,0.12)};
				\addplot+[dashed,no marks,blue,domain=100:256]{20/x};
				\addplot+[dashed,no marks,red,domain=100:256]{45/x^2};
				\legend{\scriptsize{$\max_k \norm{e_h(u)}{L^2}$}, \scriptsize{$\max_k \norm{e_h(u)}{L^\infty}$}, \scriptsize{$\max_k \norm{e_h(u)}{H^1}$}, \scriptsize{order 1 line}, \scriptsize{order 2 line}}
			\end{axis}
		\end{tikzpicture}
		\caption{Spatial convergence order of $u$.}
		\label{fig:order u 1}
	\end{subfigure}
    \hspace{1em}
	\begin{subfigure}[b]{0.45\textwidth}
		\centering
		\begin{tikzpicture}
			\begin{axis}[
				title=Plot of $e_h(\mu)$ against $1/h$,
				height=1.4\textwidth,
				width=1\textwidth,
				xlabel= $1/h$,
				ylabel= $e_h(\mu)$,
				xmode=log,
				ymode=log,
				legend pos=south west,
				legend cell align=left,
				]
				\addplot+[mark=*,red] coordinates {(16,0.008)(32,0.0033)(64,0.0013)(128,0.00039)(256,0.0001)};
				\addplot+[mark=*,green] coordinates {(16,0.016)(32,0.01)(64,0.0037)(128,0.0011)(256,0.0003)};
                \addplot+[mark=*,blue] coordinates {(16,0.23)(32,0.13)(64,0.08)(128,0.041)(256,0.02)};
				\addplot+[dashed,no marks,blue,domain=100:256]{3/x};
				\addplot+[dashed,no marks,red,domain=100:256]{11/x^2};
				\legend{\scriptsize{$\max_k \norm{e_h(\mu)}{L^2}$}, \scriptsize{$\max_k \norm{e_h(\mu)}{L^\infty}$}, \scriptsize{$\max_k \norm{e_h(\mu)}{H^1}$}, \scriptsize{order 1 line}, \scriptsize{order 2 line}}
			\end{axis}
		\end{tikzpicture}
		\caption{Spatial convergence order of $\mu$.}
		\label{fig:order mu 1}
	\end{subfigure}
    \caption{Spatial convergence order of tumour volume fraction and chemical potential in simulation 1.}
\end{figure}

\begin{figure}[!htb]
		\begin{tikzpicture}
			\begin{axis}[
				title=Plot of $e_h(n)$ against $1/h$,
				height=0.63\textwidth,
				width=0.48\textwidth,
				xlabel= $1/h$,
				ylabel= $e_h(n)$,
				xmode=log,
				ymode=log,
				legend pos=south west,
				legend cell align=left,
				]
				\addplot+[mark=*,red] coordinates {(16,0.031)(32,0.0081)(64,0.0024)(128,0.00064)(256,0.00016)};
				\addplot+[mark=*,green] coordinates {(16,0.05)(32,0.015)(64,0.0048)(128,0.0014)(256,0.0004)};
                \addplot+[mark=*,blue] coordinates {(16,0.55)(32,0.3)(64,0.16)(128,0.078)(256,0.04)};
				\addplot+[dashed,no marks,blue,domain=100:256]{6/x};
				\addplot+[dashed,no marks,red,domain=100:256]{17/x^2};
				\legend{\scriptsize{$\max_k \norm{e_h(n)}{L^2}$}, \scriptsize{$\max_k \norm{e_h(n)}{L^\infty}$}, \scriptsize{$\max_k \norm{e_h(n)}{H^1}$}, \scriptsize{order 1 line}, \scriptsize{order 2 line}}
			\end{axis}
		\end{tikzpicture}
		\caption{Spatial convergence order of nutrient volume fraction $n$ in simulation 1.}
		\label{fig:order n 1}
\end{figure}

\begin{figure}[!htb]
	\centering
	\begin{subfigure}[b]{0.47\textwidth}
		\centering
		\includegraphics[width=\textwidth]{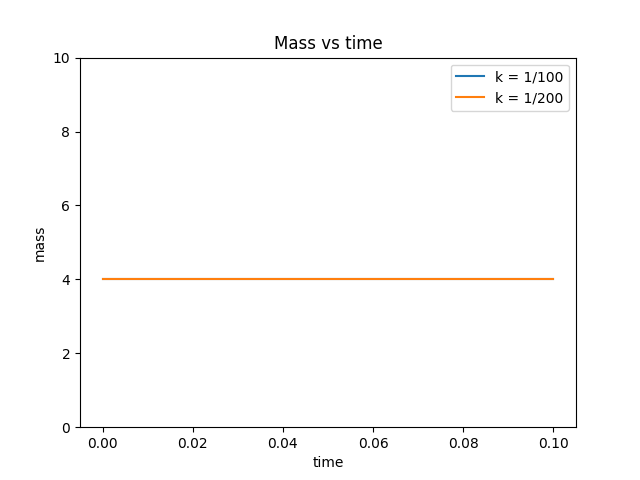}
		\caption{Graph of combined tumour and nutrient mass vs time with $\tau=1/100$ and $\tau=1/200$.}
	\end{subfigure}
    \hspace{1em}
	\begin{subfigure}[b]{0.47\textwidth}
		\centering
		\includegraphics[width=\textwidth]{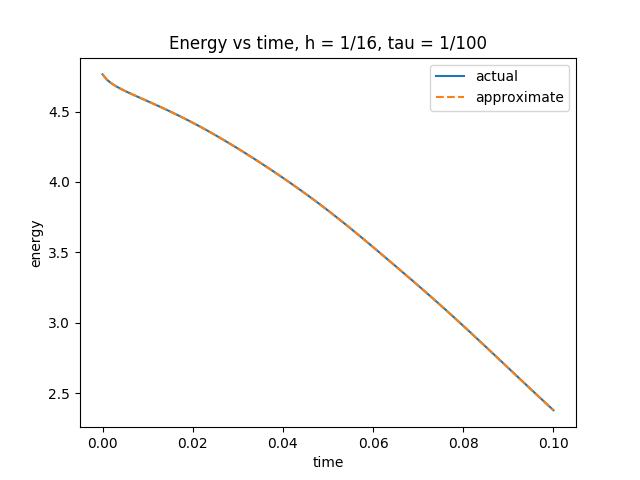}
		\caption{Graph of energy and approximate energy vs time with $h=1/16$ and $\tau=1/100$.}
	\end{subfigure}
        \caption{Conservation of mass and energy dissipation in simulation 1.}
	\label{fig:mass energy exp1}
\end{figure}

\subsection{Simulation 2 (aggregation of multiple tumours)}

We take the same parameters as those in Simulation 1. The initial data is specified to be
\[
    u_0(x,y)= 1+ \frac12 \tanh\left(\frac{0.2-\sqrt{(x+0.3)^2+y^2}}{\epsilon\sqrt{2}} +1\right) + \frac12 \tanh\left(\frac{0.2-\sqrt{(x-0.3)^2+y^2}}{\epsilon\sqrt{2}} +1\right),
\]
and $n_0(x,y)=1-u_0(x,y)$,
corresponding to two neighbouring tumours of equal sizes.
Snapshots of the tumour volume fraction $u$ and the nutrient volume fraction $n$ at selected times are shown in Figure~\ref{fig:snapshots u 2d 2} and Figure~\ref{fig:snapshots n 2d 2}, respectively. The colour indicates the relative value of the quantity. Plots of $e_h(u)$, $e_h(\mu)$, and $e_h(n)$ against $1/h$, with time-step size fixed at $\tau=5\times 10^{-4}$, are shown in Figures~\ref{fig:order u 2}, \ref{fig:order mu 2}, and \ref{fig:order n 2}, respectively.

The growth of two neighbouring tumours is simulated in this experiment under a large proliferation growth parameter and a small chemotaxis parameter. The tumours are observed to grow steadily and eventually merge to form a larger mass. Total combined mass of the tumour and the nutrient is conserved throughout the simulation. Both the free energy and the modified SAV energy of the system are observed to decay in accordance with the theory, as seen in Figure~\ref{fig:mass energy exp2}.

\begin{figure}[!htb]
	\centering
	\begin{subfigure}[b]{0.26\textwidth}
		\centering
		\includegraphics[width=\textwidth]{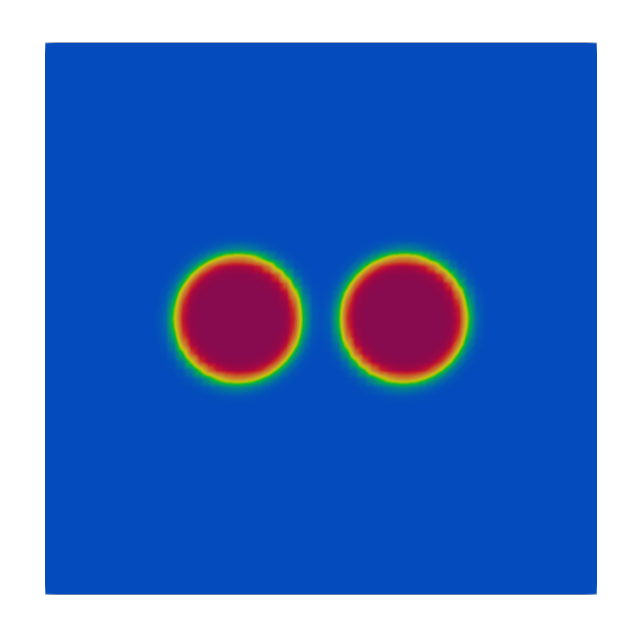}
		\caption{$t=0$}
	\end{subfigure}
	\begin{subfigure}[b]{0.26\textwidth}
		\centering
		\includegraphics[width=\textwidth]{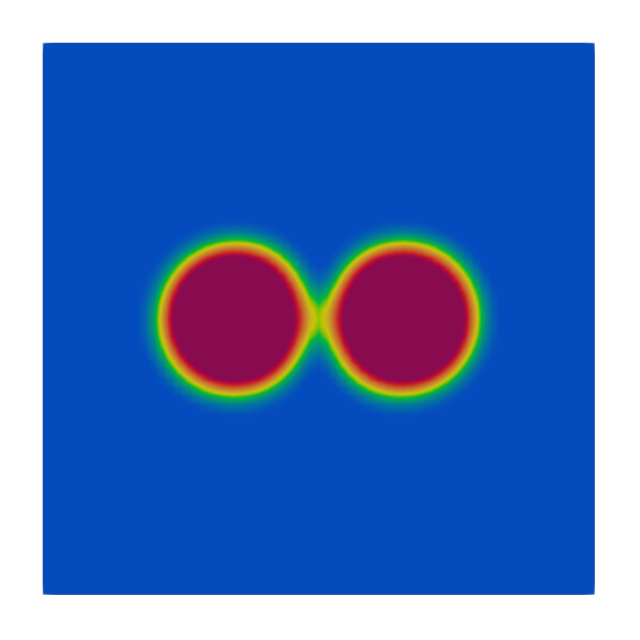}
		\caption{$t=0.02$}
	\end{subfigure}
	\begin{subfigure}[b]{0.26\textwidth}
		\centering
		\includegraphics[width=\textwidth]{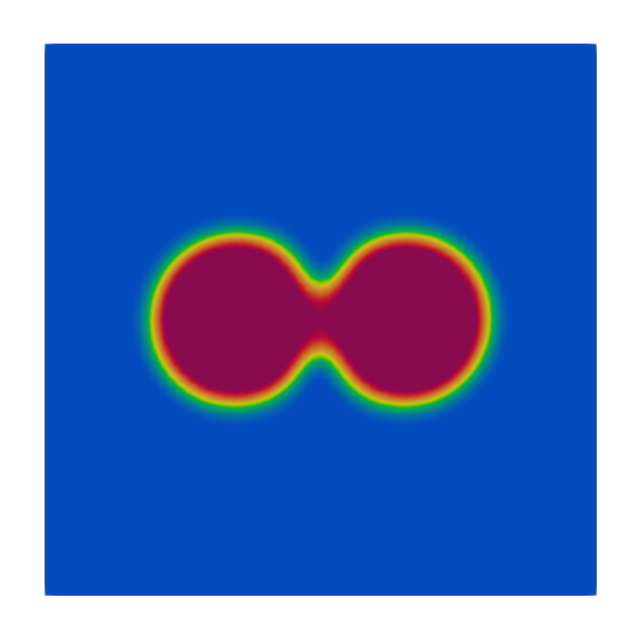}
		\caption{$t=0.03$}
	\end{subfigure}
	\begin{subfigure}[b]{0.1\textwidth}
		\centering
		\includegraphics[width=\textwidth]{u_exp1_legend.png}
	\end{subfigure}
	\begin{subfigure}[b]{0.26\textwidth}
		\centering
		\includegraphics[width=\textwidth]{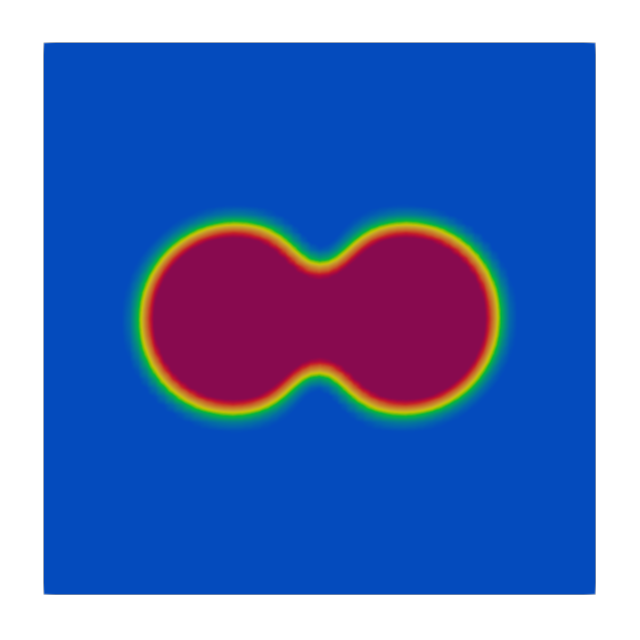}
		\caption{$t=0.04$}
	\end{subfigure}
	\begin{subfigure}[b]{0.26\textwidth}
		\centering
		\includegraphics[width=\textwidth]{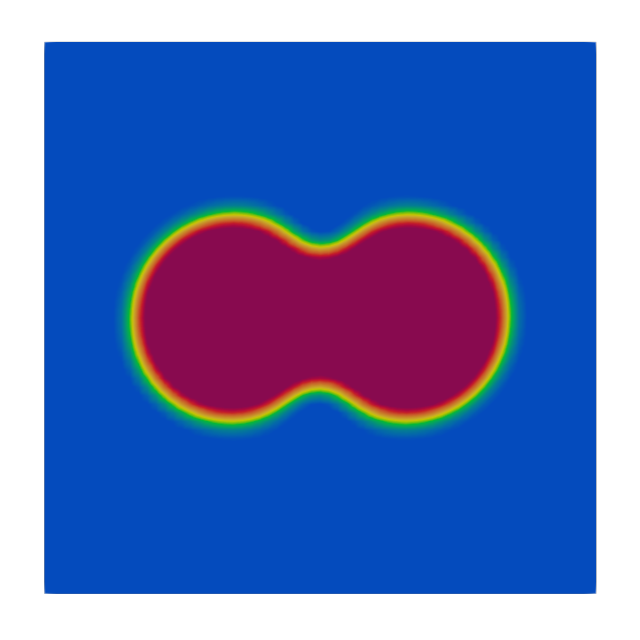}
		\caption{$t=0.05$}
	\end{subfigure}
	\begin{subfigure}[b]{0.26\textwidth}
		\centering
		\includegraphics[width=\textwidth]{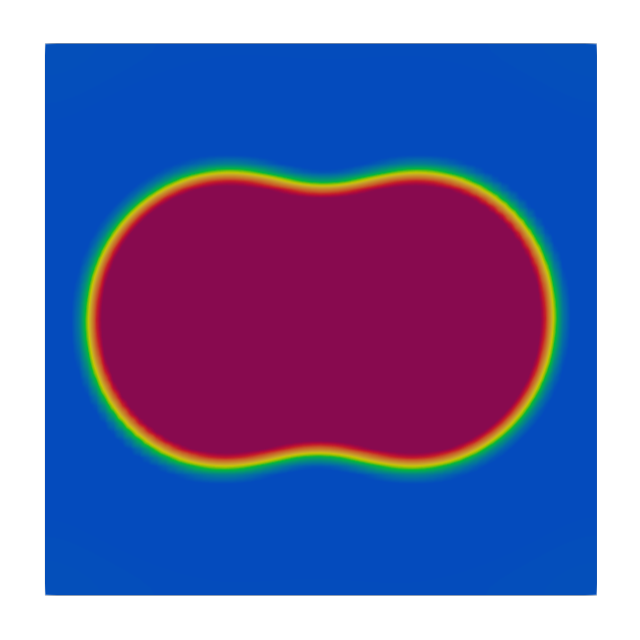}
		\caption{$t=0.1$}
	\end{subfigure}
	\begin{subfigure}[b]{0.1\textwidth}
		\centering
		\includegraphics[width=\textwidth]{u_exp1_legend.png}
	\end{subfigure}
	\caption{Snapshots of the tumour volume fraction $u$ in simulation 2.}
	\label{fig:snapshots u 2d 2}
\end{figure}

\begin{figure}[!htb]
	\centering
	\begin{subfigure}[b]{0.26\textwidth}
		\centering
		\includegraphics[width=\textwidth]{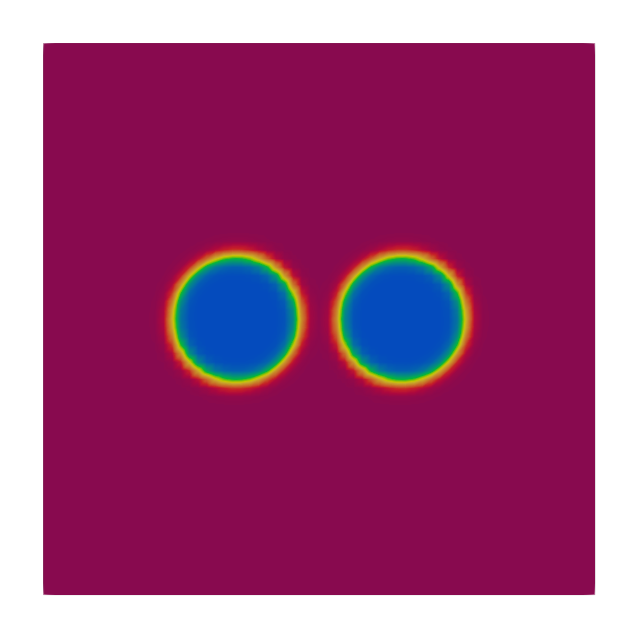}
		\caption{$t=0$}
	\end{subfigure}
	\begin{subfigure}[b]{0.26\textwidth}
		\centering
		\includegraphics[width=\textwidth]{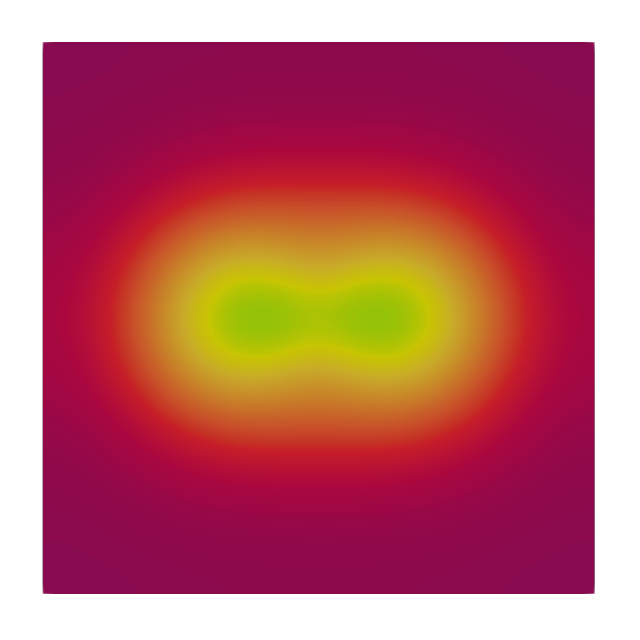}
		\caption{$t=0.02$}
	\end{subfigure}
	\begin{subfigure}[b]{0.26\textwidth}
		\centering
		\includegraphics[width=\textwidth]{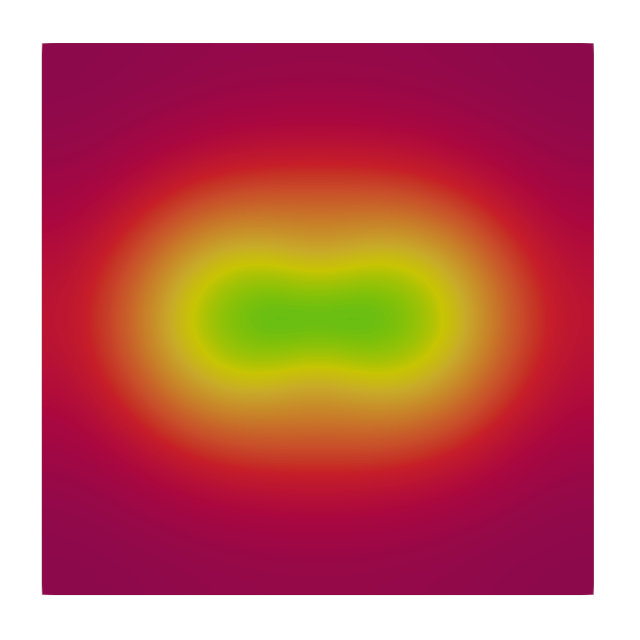}
		\caption{$t=0.03$}
	\end{subfigure}
	\begin{subfigure}[b]{0.1\textwidth}
		\centering
		\includegraphics[width=\textwidth]{n_exp1_legend.png}
	\end{subfigure}
	\begin{subfigure}[b]{0.26\textwidth}
		\centering
		\includegraphics[width=\textwidth]{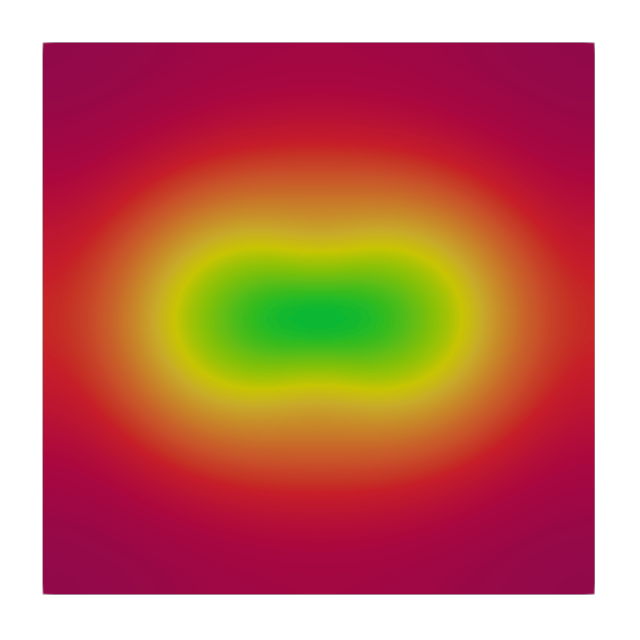}
		\caption{$t=0.04$}
	\end{subfigure}
	\begin{subfigure}[b]{0.26\textwidth}
		\centering
		\includegraphics[width=\textwidth]{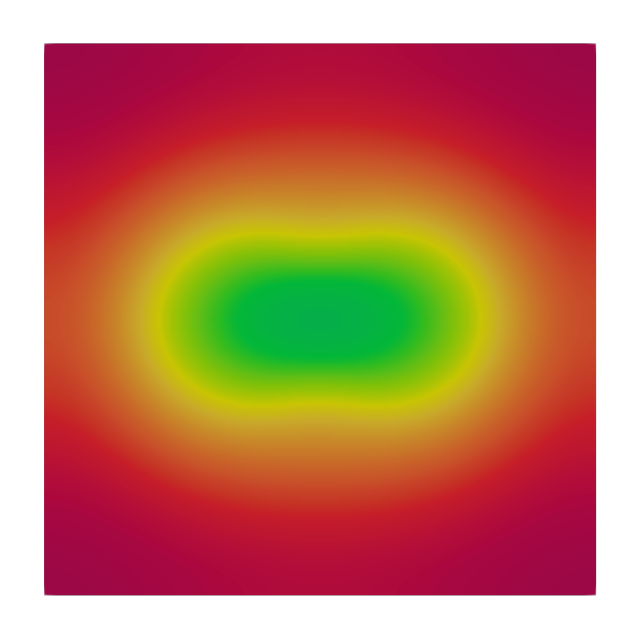}
		\caption{$t=0.05$}
	\end{subfigure}
	\begin{subfigure}[b]{0.26\textwidth}
		\centering
		\includegraphics[width=\textwidth]{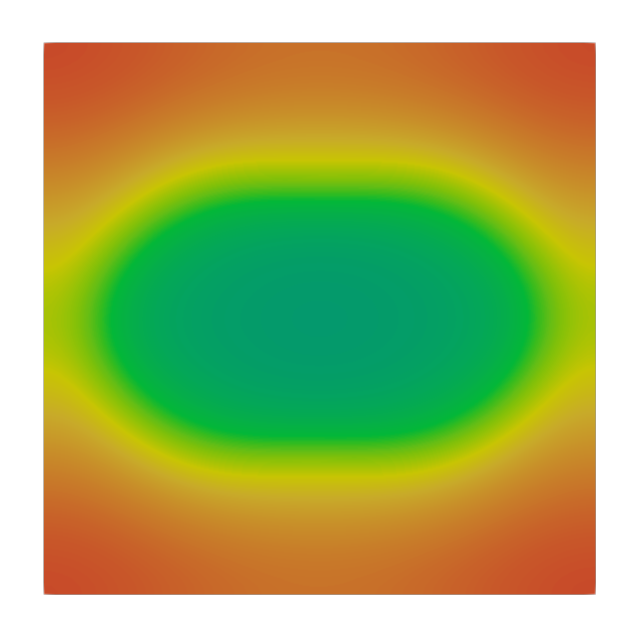}
		\caption{$t=0.1$}
	\end{subfigure}
	\begin{subfigure}[b]{0.1\textwidth}
		\centering
		\includegraphics[width=\textwidth]{n_exp1_legend.png}
	\end{subfigure}
	\caption{Snapshots of the nutrient volume fraction $n$ in simulation 2.}
	\label{fig:snapshots n 2d 2}
\end{figure}

\begin{figure}[!htb]
	\begin{subfigure}[b]{0.45\textwidth}
		\centering
		\begin{tikzpicture}
			\begin{axis}[
				title=Plot of $e_h(u)$ against $1/h$,
				height=1.4\textwidth,
				width=1\textwidth,
				xlabel= $1/h$,
				ylabel= $e_h(u)$,
				xmode=log,
				ymode=log,
				legend pos=south west,
				legend cell align=left,
				]
				\addplot+[mark=*,red] coordinates {(16,0.092)(32,0.025)(64,0.008)(128,0.0022)(256,0.0006)};
				\addplot+[mark=*,green] coordinates {(16,0.15)(32,0.045)(64,0.013)(128,0.0038)(256,0.0012)};
                \addplot+[mark=*,blue] coordinates {(16,2.17)(32,1.24)(64,0.7)(128,0.36)(256,0.19)};
				\addplot+[dashed,no marks,blue,domain=100:256]{31/x};
				\addplot+[dashed,no marks,red,domain=100:256]{52/x^2};
				\legend{\scriptsize{$\max_k \norm{e_h(u)}{L^2}$}, \scriptsize{$\max_k \norm{e_h(u)}{L^\infty}$}, \scriptsize{$\max_k \norm{e_h(u)}{H^1}$}, \scriptsize{order 1 line}, \scriptsize{order 2 line}}
			\end{axis}
		\end{tikzpicture}
		\caption{Spatial convergence order of $u$.}
		\label{fig:order u 2}
	\end{subfigure}
    \hspace{1em}
	\begin{subfigure}[b]{0.45\textwidth}
		\centering
		\begin{tikzpicture}
			\begin{axis}[
				title=Plot of $e_h(\mu)$ against $1/h$,
				height=1.4\textwidth,
				width=1\textwidth,
				xlabel= $1/h$,
				ylabel= $e_h(\mu)$,
				xmode=log,
				ymode=log,
				legend pos=south west,
				legend cell align=left,
				]
				\addplot+[mark=*,red] coordinates {(16,0.013)(32,0.0052)(64,0.0019)(128,0.00055)(256,0.00015)};
				\addplot+[mark=*,green] coordinates {(16,0.028)(32,0.017)(64,0.004)(128,0.0012)(256,0.0004)};
                \addplot+[mark=*,blue] coordinates {(16,0.3)(32,0.2)(64,0.11)(128,0.058)(256,0.03)};
				\addplot+[dashed,no marks,blue,domain=100:256]{5/x};
				\addplot+[dashed,no marks,red,domain=100:256]{13/x^2};
				\legend{\scriptsize{$\max_k \norm{e_h(\mu)}{L^2}$}, \scriptsize{$\max_k \norm{e_h(\mu)}{L^\infty}$}, \scriptsize{$\max_k \norm{e_h(\mu)}{H^1}$}, \scriptsize{order 1 line}, \scriptsize{order 2 line}}
			\end{axis}
		\end{tikzpicture}
		\caption{Spatial convergence order of $\mu$.}
		\label{fig:order mu 2}
	\end{subfigure}
    \caption{Spatial convergence order of tumour volume fraction and chemical potential in simulation 2.}
\end{figure}

\begin{figure}[!htb]
		\begin{tikzpicture}
			\begin{axis}[
				title=Plot of $e_h(n)$ against $1/h$,
				height=0.63\textwidth,
				width=0.48\textwidth,
				xlabel= $1/h$,
				ylabel= $e_h(n)$,
				xmode=log,
				ymode=log,
				legend pos=south west,
				legend cell align=left,
				]
				\addplot+[mark=*,red] coordinates {(16,0.049)(32,0.012)(64,0.0035)(128,0.00092)(256,0.00024)};
				\addplot+[mark=*,green] coordinates {(16,0.089)(32,0.016)(64,0.0051)(128,0.0014)(256,0.0004)};
                \addplot+[mark=*,blue] coordinates {(16,0.83)(32,0.43)(64,0.22)(128,0.11)(256,0.05)};
				\addplot+[dashed,no marks,blue,domain=100:256]{8/x};
				\addplot+[dashed,no marks,red,domain=100:256]{9/x^2};
				\legend{\scriptsize{$\max_k \norm{e_h(n)}{L^2}$}, \scriptsize{$\max_k \norm{e_h(n)}{L^\infty}$}, \scriptsize{$\max_k \norm{e_h(n)}{H^1}$}, \scriptsize{order 1 line}, \scriptsize{order 2 line}}
			\end{axis}
		\end{tikzpicture}
		\caption{Spatial convergence order of nutrient volume fraction $n$ in simulation 2.}
		\label{fig:order n 2}
\end{figure}

\begin{figure}[!htb]
	\centering
	\begin{subfigure}[b]{0.47\textwidth}
		\centering
		\includegraphics[width=\textwidth]{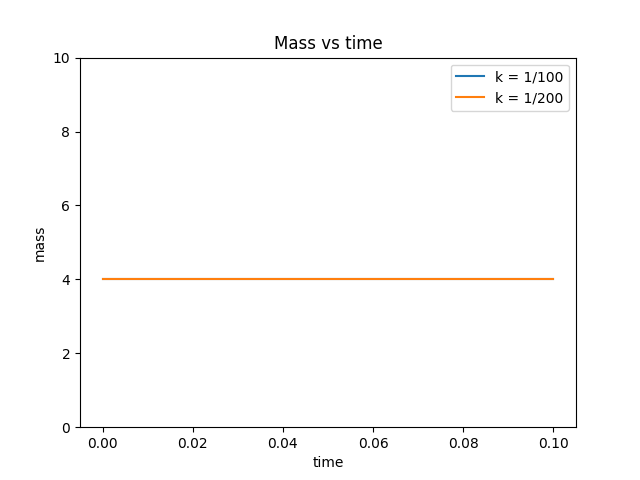}
		\caption{Graph of combined tumour and nutrient mass vs time with $\tau=1/100$ and $\tau=1/200$.}
	\end{subfigure}
    \hspace{1em}
	\begin{subfigure}[b]{0.47\textwidth}
		\centering
		\includegraphics[width=\textwidth]{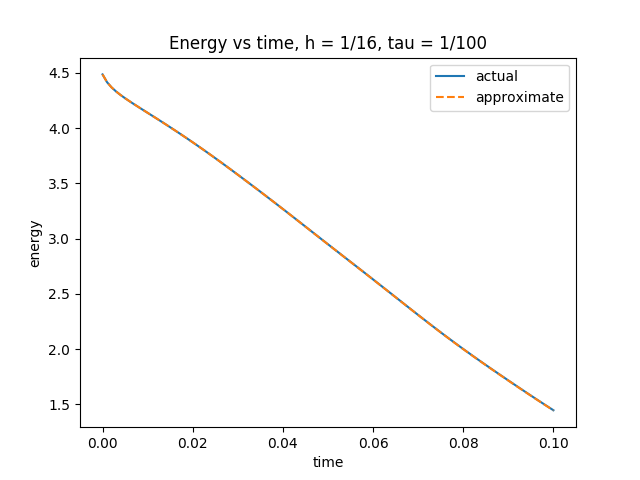}
		\caption{Graph of energy and approximate energy vs time with $h=1/16$ and $\tau=1/100$.}
	\end{subfigure}
        \caption{Conservation of mass and energy dissipation in simulation 2.}
	\label{fig:mass energy exp2}
\end{figure}

\subsection{Simulation 3 (temporal rates of convergence)}

We perform a numerical experiment on the domain $\mathscr{D}=[-0.5,0.5]^2$ to verify the temporal convergence rates of the scheme. The parameters are chosen as follows: $B=5.0$, $\kappa=0.25$, $\lambda=0.2$, $\delta=0.4$, $p_0=20.0$, $\chi_0=0.2$, and $\epsilon=0.02$. The initial conditions are given by
\[
u_0(x,y)=\big| \sin(2\pi x) \sin(2\pi y) \big|,
    \;\text{ and }\; 
    n_0(x,y)=1-u_0(x,y),
\]
which represent less regular initial data, as $u_0$ and $n_0$ only belong to $H^1$, but not $H^2$.

Plots of $e_\tau(u)$, $e_\tau(\mu)$, and $e_\tau(n)$ against $1/\tau$ are displayed in Figures~\ref{fig:order u temp}, \ref{fig:order mu temp}, and \ref{fig:order n temp}, respectively. The results indicate that the scheme achieves first-order temporal convergence in practice, even for less regular initial data beyond the assumptions of Theorem~\ref{the:main}.

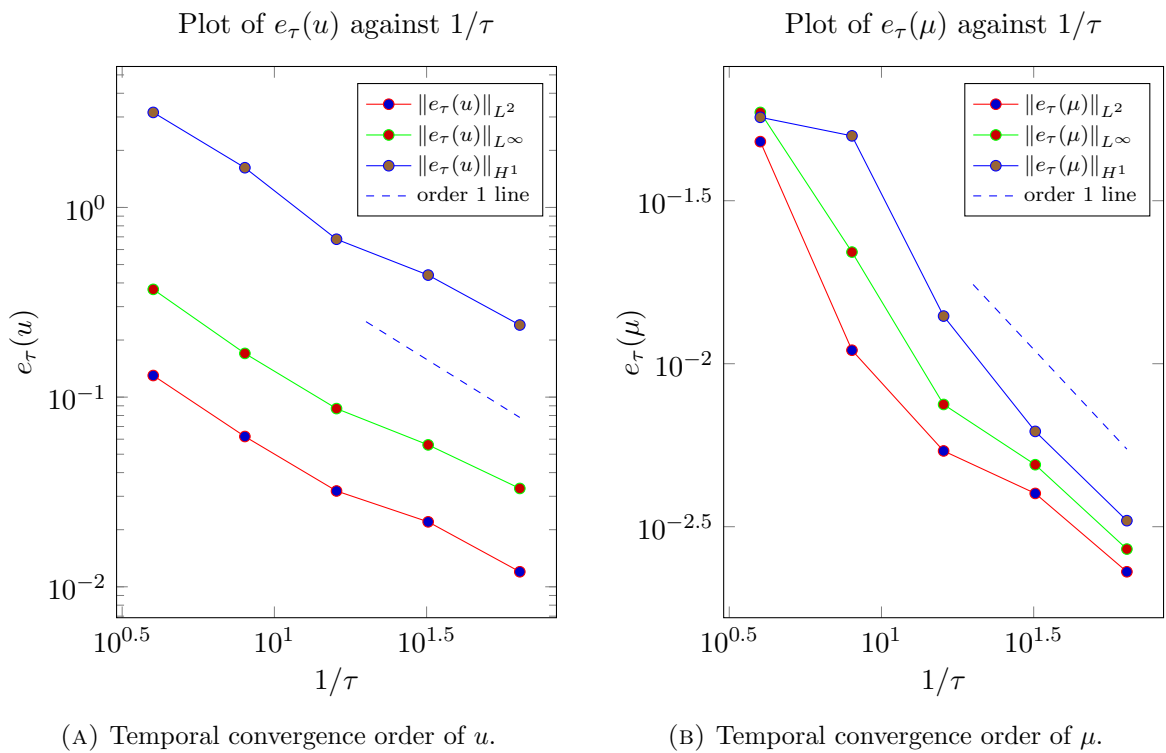
\begin{figure}[!htb]
	\begin{subfigure}[b]{0.45\textwidth}
		\centering
		\begin{tikzpicture}
			\begin{axis}[
				title=Plot of $e_\tau(u)$ against $1/\tau$,
				height=1.2\textwidth,
				width=1\textwidth,
				xlabel= $1/\tau$,
				ylabel= $e_\tau(u)$,
				xmode=log,
				ymode=log,
				legend pos=north east,
				legend cell align=left,
				]
				\addplot+[mark=*,red] coordinates {(4,0.13)(8,0.062)(16,0.032)(32,0.022)(64,0.012)};
				\addplot+[mark=*,green] coordinates {(4,0.37)(8,0.17)(16,0.087)(32,0.056)(64,0.033)};
                \addplot+[mark=*,blue] coordinates {(4,3.17)(8,1.62)(16,0.68)(32,0.44)(64,0.24)};
				\addplot+[dashed,no marks,blue,domain=20:64]{5/x};
				\legend{\scriptsize{$\norm{e_\tau(u)}{L^2}$}, \scriptsize{$\norm{e_\tau(u)}{L^\infty}$}, \scriptsize{$\norm{e_\tau(u)}{H^1}$}, \scriptsize{order 1 line}}
			\end{axis}
		\end{tikzpicture}
		\caption{Temporal convergence order of $u$.}
		\label{fig:order u temp}
	\end{subfigure}
    \hspace{1em}
	\begin{subfigure}[b]{0.45\textwidth}
		\centering
		\begin{tikzpicture}
			\begin{axis}[
				title=Plot of $e_\tau(\mu)$ against $1/\tau$,
				height=1.2\textwidth,
				width=1\textwidth,
				xlabel= $1/\tau$,
				ylabel= $e_\tau(\mu)$,
				xmode=log,
				ymode=log,
				legend pos=north east,
				legend cell align=left,
				]
				\addplot+[mark=*,red] coordinates {(4,0.048)(8,0.011)(16,0.0054)(32,0.004)(64,0.0023)};
				\addplot+[mark=*,green] coordinates {(4,0.059)(8,0.022)(16,0.0075)(32,0.0049)(64,0.0027)};
                \addplot+[mark=*,blue] coordinates {(4,0.057)(8,0.05)(16,0.014)(32,0.0062)(64,0.0033)};
				\addplot+[dashed,no marks,blue,domain=20:64]{0.35/x};
				\legend{\scriptsize{$\norm{e_\tau(\mu)}{L^2}$}, \scriptsize{$\norm{e_\tau(\mu)}{L^\infty}$}, \scriptsize{$\norm{e_\tau(\mu)}{H^1}$}, \scriptsize{order 1 line}}
			\end{axis}
		\end{tikzpicture}
		\caption{Temporal convergence order of $\mu$.}
		\label{fig:order mu temp}
	\end{subfigure}
    \caption{Temporal convergence of tumour volume fraction and chemical potential in simulation 3.}
\end{figure}

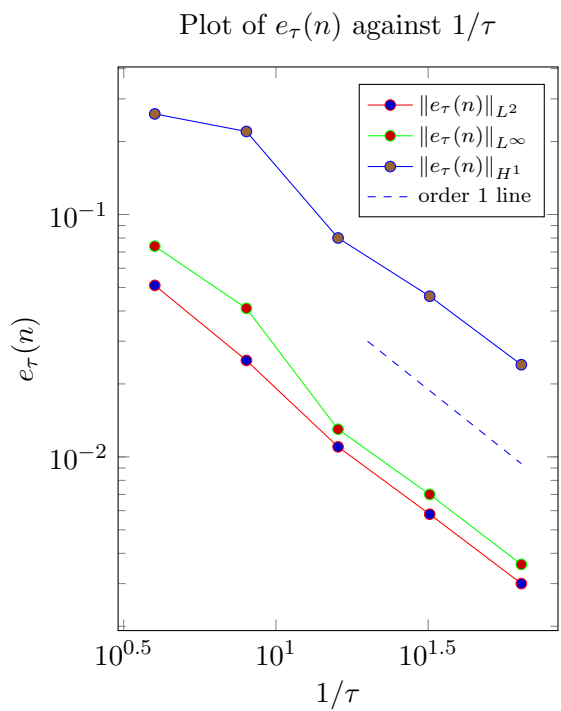
\begin{figure}[!htb]
		\begin{tikzpicture}
			\begin{axis}[
				title=Plot of $e_\tau(n)$ against $1/\tau$,
				height=0.55\textwidth,
				width=0.45\textwidth,
				xlabel= $1/\tau$,
				ylabel= $e_\tau(n)$,
				xmode=log,
				ymode=log,
				legend pos=north east,
				legend cell align=left,
				]
				\addplot+[mark=*,red] coordinates {(4,0.051)(8,0.025)(16,0.011)(32,0.0058)(64,0.003)};
				\addplot+[mark=*,green] coordinates {(4,0.074)(8,0.041)(16,0.013)(32,0.0070)(64,0.0036)};
                \addplot+[mark=*,blue] coordinates {(4,0.26)(8,0.22)(16,0.08)(32,0.046)(64,0.024)};
				\addplot+[dashed,no marks,blue,domain=20:64]{0.6/x};
				\legend{\scriptsize{$\norm{e_\tau(n)}{L^2}$}, \scriptsize{$\norm{e_\tau(n)}{L^\infty}$}, \scriptsize{$\norm{e_\tau(n)}{H^1}$}, \scriptsize{order 1 line}}
			\end{axis}
		\end{tikzpicture}
		\caption{Temporal convergence of nutrient volume fraction $n$ in simulation 3.}
		\label{fig:order n temp}
\end{figure}

\subsection{Simulation 4 (chemotactic growth of a three-dimensional tumour)}

For this simulation, we take the domain $\mathscr{D}=[-1,1]^3$. We perform a simulation to illustrate growth of a spherical tumour which exhibits chemotactic growth.  The parameters are taken as follows: $B=20.0, \kappa=0.25, \lambda=0.5, \delta=0.4, p_0=25.0, \chi_0=1.6$, and $\epsilon=0.02$. For this choice of parameters, $\lambda<\chi_0^2 \delta$, so the sufficient coercivity condition \eqref{equ:ass lambda ener} used in the analysis is not satisfied. Nevertheless, the simulation remains stable and captures the expected chemotactic tumour growth.

The initial data for the tumour volume fraction and the nutrient volume fraction are, respectively,
\begin{align*}
    u_0(x,y,z)
    &=
    \frac12 + \frac12 \tanh\left(\frac{0.2-\sqrt{x^2+y^2+z^2}}{\epsilon\sqrt{2}} + 1\right),
\end{align*}
which represents a slightly perturbed spherical tumour, and
\begin{align*}
    n_0(x,y,z)
    &=
    \frac32 + \frac12 \tanh\left(\frac{0.2-\sqrt{(x+0.3)^2+(y+0.3)^2+z^2}}{\epsilon\sqrt{2}} + 1\right)
    \\
    &\quad
    +
    \frac12 \tanh\left(\frac{0.2-\sqrt{(x-0.3)^2+(y-0.3)^2+z^2}}{\epsilon\sqrt{2}} + 1\right)
    \\
    &\quad
    +
    \frac14 \tanh\left(\frac{0.2-\sqrt{(x-0.3)^2+(y+0.3)^2+z^2}}{\epsilon\sqrt{2}} + 1\right)
    \\
    &\quad
    +
    \frac14 \tanh\left(\frac{0.2-\sqrt{(x+0.3)^2+(y-0.3)^2+z^2}}{\epsilon\sqrt{2}} + 1\right),
\end{align*}
which represents four spherical nutrient sources with varying densities. Snapshots of the tumour volume fraction $u$ and the nutrient volume fraction $n$ with mesh-size $h=1/50$ at selected times are shown in Figure~\ref{fig:snapshots u 3d 3} and Figure~\ref{fig:snapshots n 3d 3}, respectively. The colour indicates the relative value of the quantity. 

In this experiment, we simulate the growth of a nearly spherical tumour under a large chemotaxis parameter. The tumour exhibits chemotactic behaviour, with growth concentrated in regions of high nutrient density, before eventually coalescing to form a large mass~\cite{PulSimSea15}. Throughout the simulation, the total mass of both the tumour and the nutrient is conserved. Moreover, both the free energy and its numerical approximation are observed to decay consistently with the theoretical predictions, as illustrated in Figure~\ref{fig:mass energy exp3}.

\begin{figure}[!htb]
	\centering
	\begin{subfigure}[b]{0.26\textwidth}
		\centering
		\includegraphics[width=\textwidth]{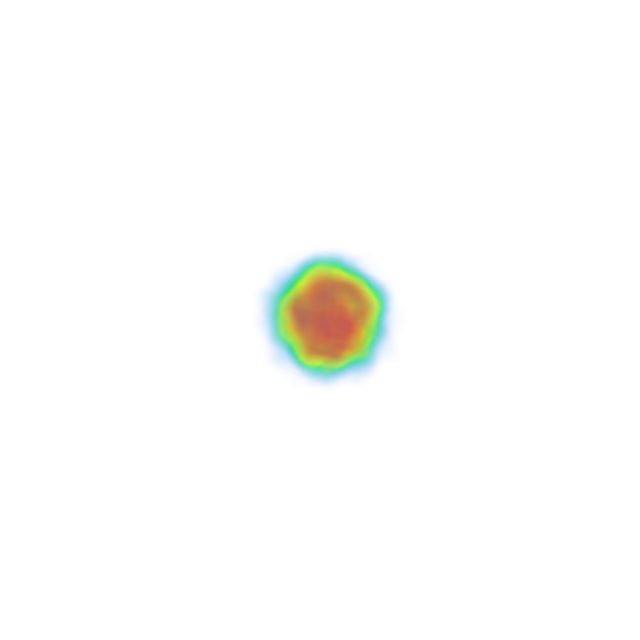}
		\caption{$t=0$}
	\end{subfigure}
	\begin{subfigure}[b]{0.26\textwidth}
		\centering
		\includegraphics[width=\textwidth]{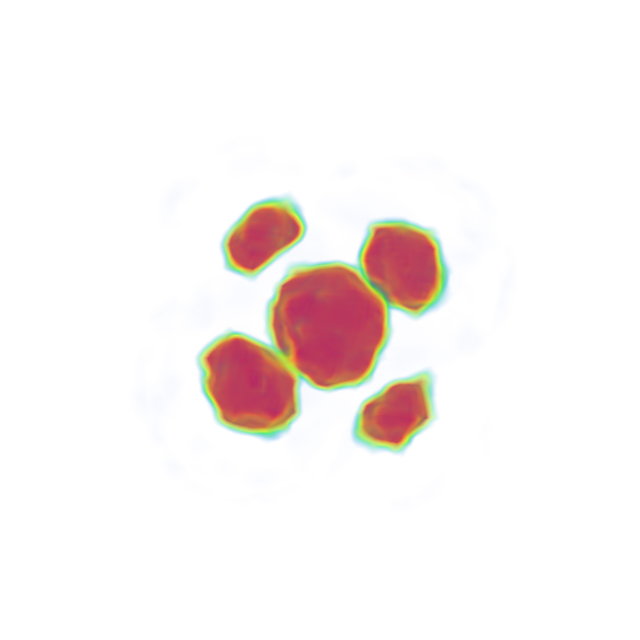}
		\caption{$t=0.05$}
	\end{subfigure}
	\begin{subfigure}[b]{0.26\textwidth}
		\centering
		\includegraphics[width=\textwidth]{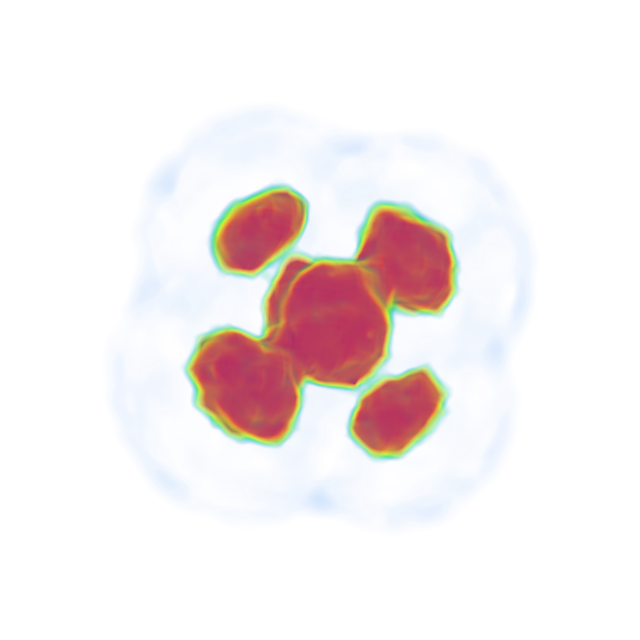}
		\caption{$t=0.08$}
	\end{subfigure}
	\begin{subfigure}[b]{0.1\textwidth}
		\centering
		\includegraphics[width=\textwidth]{u_exp1_legend.png}
	\end{subfigure}
	\begin{subfigure}[b]{0.26\textwidth}
		\centering
		\includegraphics[width=\textwidth]{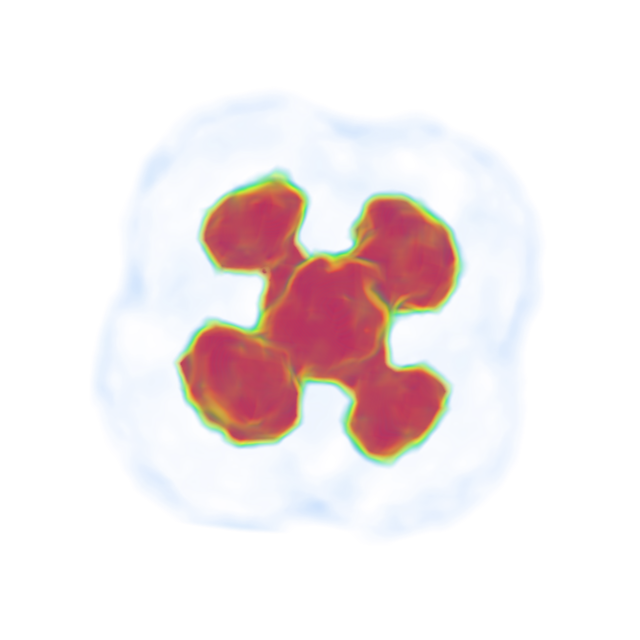}
		\caption{$t=0.1$}
	\end{subfigure}
	\begin{subfigure}[b]{0.26\textwidth}
		\centering
		\includegraphics[width=\textwidth]{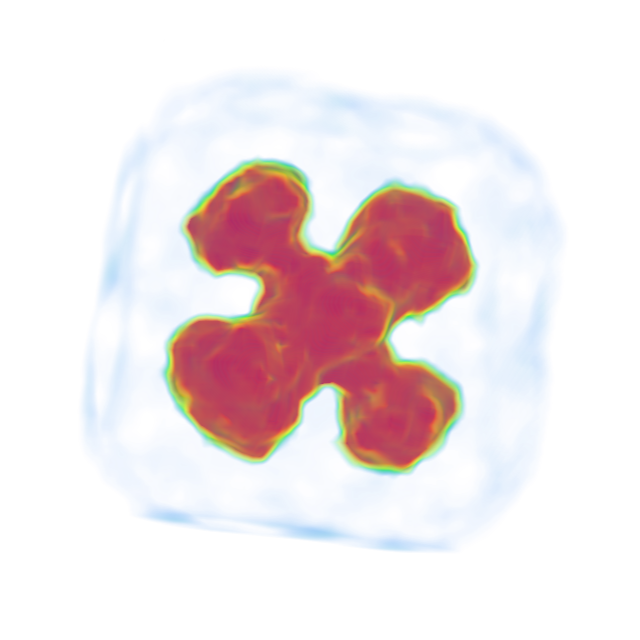}
		\caption{$t=0.15$}
	\end{subfigure}
	\begin{subfigure}[b]{0.26\textwidth}
		\centering
		\includegraphics[width=\textwidth]{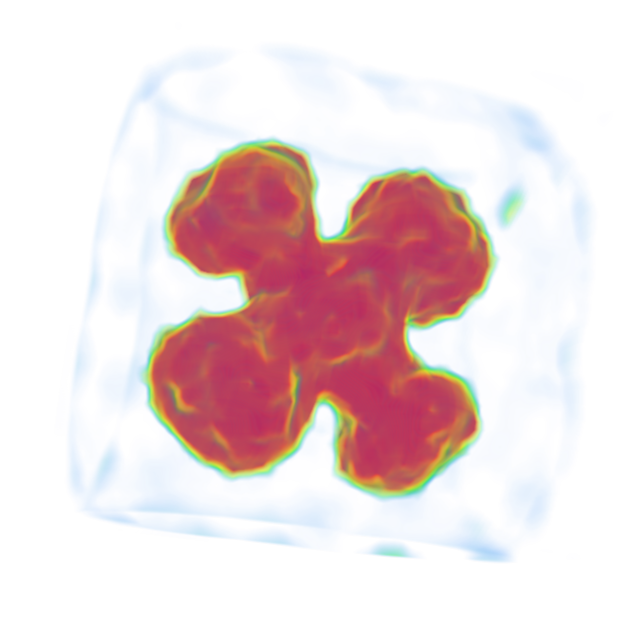}
		\caption{$t=0.25$}
	\end{subfigure}
	\begin{subfigure}[b]{0.1\textwidth}
		\centering
		\includegraphics[width=\textwidth]{u_exp1_legend.png}
	\end{subfigure}
	\caption{Snapshots of the tumour volume fraction $u$ in simulation 4.}
	\label{fig:snapshots u 3d 3}
\end{figure}

\begin{figure}[!htb]
	\centering
	\begin{subfigure}[b]{0.26\textwidth}
		\centering
		\includegraphics[width=\textwidth]{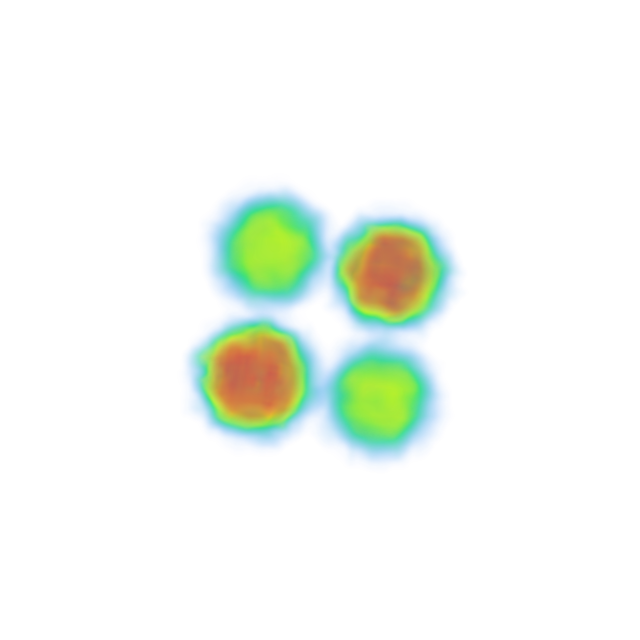}
		\caption{$t=0$}
	\end{subfigure}
	\begin{subfigure}[b]{0.26\textwidth}
		\centering
		\includegraphics[width=\textwidth]{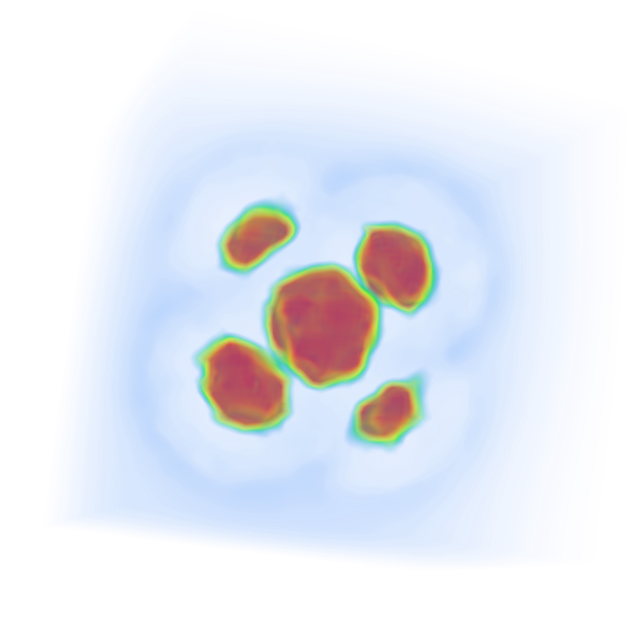}
		\caption{$t=0.05$}
	\end{subfigure}
	\begin{subfigure}[b]{0.26\textwidth}
		\centering
		\includegraphics[width=\textwidth]{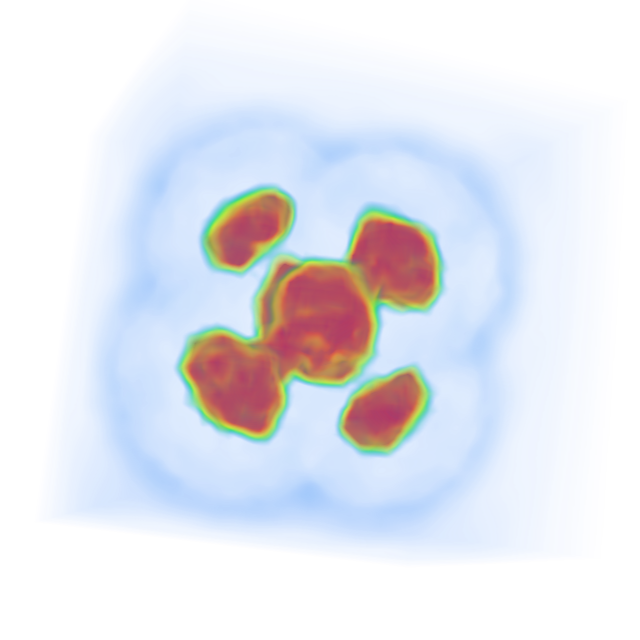}
		\caption{$t=0.08$}
	\end{subfigure}
	\begin{subfigure}[b]{0.1\textwidth}
		\centering
		\includegraphics[width=\textwidth]{n_exp1_legend.png}
	\end{subfigure}
	\begin{subfigure}[b]{0.26\textwidth}
		\centering
		\includegraphics[width=\textwidth]{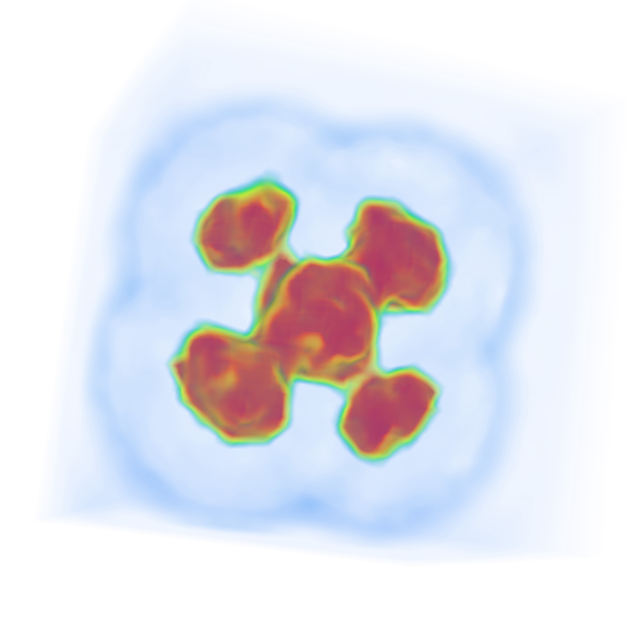}
		\caption{$t=0.1$}
	\end{subfigure}
	\begin{subfigure}[b]{0.26\textwidth}
		\centering
		\includegraphics[width=\textwidth]{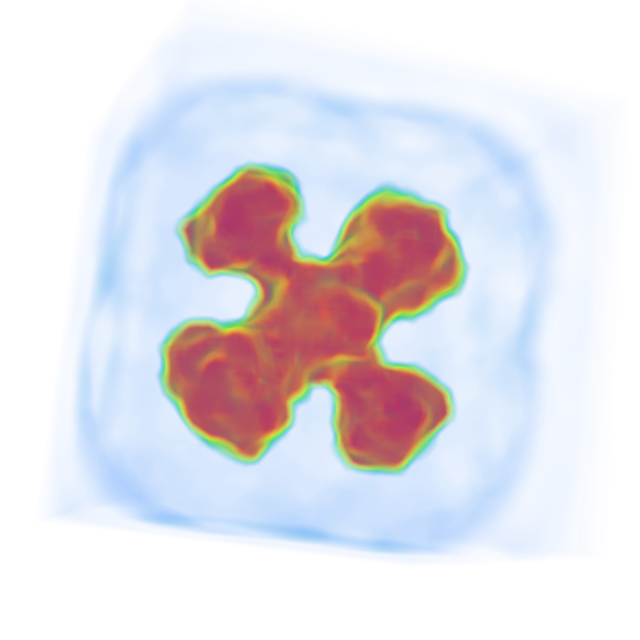}
		\caption{$t=0.15$}
	\end{subfigure}
	\begin{subfigure}[b]{0.26\textwidth}
		\centering
		\includegraphics[width=\textwidth]{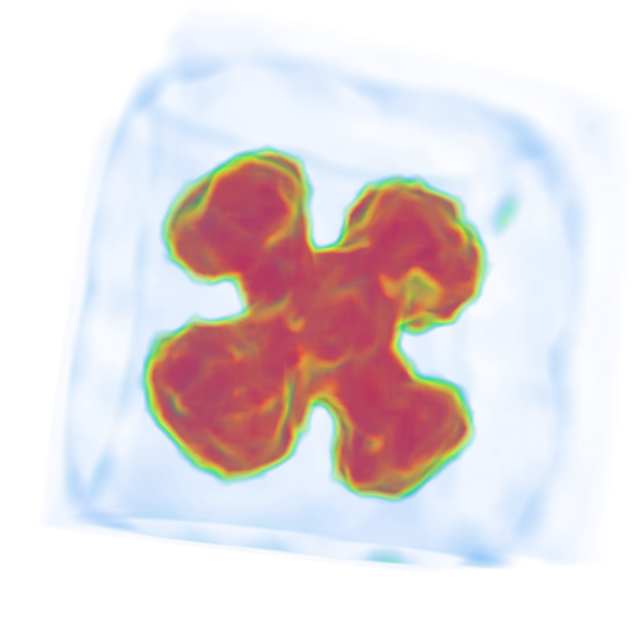}
		\caption{$t=0.25$}
	\end{subfigure}
	\begin{subfigure}[b]{0.1\textwidth}
		\centering
		\includegraphics[width=\textwidth]{n_exp1_legend.png}
	\end{subfigure}
	\caption{Snapshots of the nutrient volume fraction $n$ in simulation 4.}
	\label{fig:snapshots n 3d 3}
\end{figure}

\begin{figure}[!htb]
	\centering
	\begin{subfigure}[b]{0.47\textwidth}
		\centering
		\includegraphics[width=\textwidth]{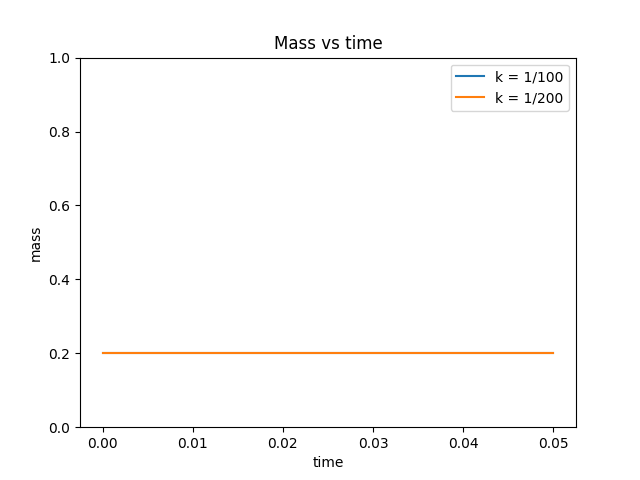}
		\caption{Graph of combined tumour and nutrient mass vs time with $\tau=1/100$ and $\tau=1/200$.}
	\end{subfigure}
    \hspace{1em}
	\begin{subfigure}[b]{0.47\textwidth}
		\centering
		\includegraphics[width=\textwidth]{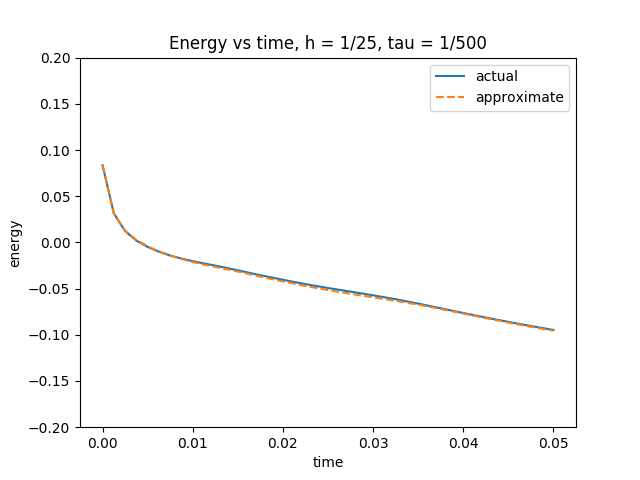}
		\caption{Graph of energy and approximate energy vs time with $h=1/25$ and $\tau=1/500$.}
	\end{subfigure}
        \caption{Conservation of mass and energy dissipation in simulation 4.}
	\label{fig:mass energy exp3}
\end{figure}

\section{Conclusion}\label{sec:conclusion}

We have presented a linear fully discrete structure-preserving finite element method for a diffuse-interface model of tumour growth, combining a scalar auxiliary variable (SAV) formulation with a stabilised mixed FEM for the Cahn--Hilliard part and standard conforming FEM for the reaction-diffusion part, along with a first-order time-stepping scheme. The proposed method preserves mass, satisfies a discrete energy dissipation law (albeit for an approximate energy functional), and requires solving only linear systems at each time step. Under suitable regularity assumptions, we establish rigorous error estimates in the $L^2$, $H^1$, and $L^\infty$ norms, achieving first-order temporal and optimal spatial convergence. Numerical experiments support the theoretical results and demonstrate the robustness of the scheme, even for less regular initial data, in capturing characteristic aggregation, as well as diffusion-driven or chemotactic tumour growth.

Future work includes extensions to higher-order temporal schemes and adaptive spatial discretisations. It would also be interesting to couple the model with Navier--Stokes~\cite{LamWu18} or Brinkman-type equations~\cite{EbeGar19} to account for convective effects, or with Keller--Segel-type equations to better capture chemotactic behaviour~\cite{RocSchSig23}. Additionally, the use of a singular logarithmic potential is necessary to maintain the phase-field variable within physically relevant interval~\cite{ColGilSigSpr23}, and developing efficient numerical methods to handle this is part of our ongoing research.

\section*{Acknowledgements}
The authors acknowledge financial support from the Edinburgh Mathematical Society Research Fund (EMS-RSF) under Grant E2503-LIN. A. Soenjaya is also supported by the Australian Government through the Research Training Program (RTP) Scholarship awarded at the University of New South Wales, Sydney. T. Tran is partially supported by the Australian Research Council, Grant number DP220101811.

Part of this work was completed during A. Soenjaya's visit to the University of Dundee. The hospitality of the Department of Mathematics at the University of Dundee and the host (Ping Lin) is acknowledged.

The authors thank the anonymous referees for their careful reading and constructive comments, which helped improve the presentation of the paper.

\newcommand{\noopsort}[1]{}\def\cprime{$'$}
\def\soft#1{\leavevmode\setbox0=\hbox{h}\dimen7=\ht0\advance \dimen7
	by-1ex\relax\if t#1\relax\rlap{\raise.6\dimen7
		\hbox{\kern.3ex\char'47}}#1\relax\else\if T#1\relax
	\rlap{\raise.5\dimen7\hbox{\kern1.3ex\char'47}}#1\relax \else\if
	d#1\relax\rlap{\raise.5\dimen7\hbox{\kern.9ex \char'47}}#1\relax\else\if
	D#1\relax\rlap{\raise.5\dimen7 \hbox{\kern1.4ex\char'47}}#1\relax\else\if
	l#1\relax \rlap{\raise.5\dimen7\hbox{\kern.4ex\char'47}}#1\relax \else\if
	L#1\relax\rlap{\raise.5\dimen7\hbox{\kern.7ex
			\char'47}}#1\relax\else\message{accent \string\soft \space #1 not
		defined!}#1\relax\fi\fi\fi\fi\fi\fi}

\end{document}